\documentclass[reqno, 11pt]{amsart}
\usepackage{amsmath,amssymb,amsfonts,amsthm,graphicx,color}
\usepackage{mathrsfs}
\usepackage{hyperref}
\usepackage{mathtools}
\usepackage{enumitem}
\usepackage{graphicx,type1cm,eso-pic,color, float}
\usepackage[margin=0.8in]{geometry}
\allowdisplaybreaks
\usepackage{orcidlink}
\usepackage[dvipsnames]{xcolor}   
\newcommand{\annubc}[2]{%
  \overbrace{#1}^{\text{\scriptsize #2}}%
}

\newcommand{\annotopc}[2]{%
  \underbrace{#1}_{\text{\scriptsize #2}}%
}

\newcommand{\R}{\mathbb{R}}

\def\be{\begin{equation}}
\def\ee{\end{equation}}
\def\bse{\begin{subequations}}
\def\ese{\end{subequations}}
\def\bge{\begin{eqnarray}}
\def\bgee{\begin{eqnarray*}}
\def\ege{\end{eqnarray}}
\def\egee{\end{eqnarray*}}

\newtheorem{theorem}{Theorem}[section]
\newtheorem{proposition}[theorem]{Proposition}
\newtheorem{lemma}[theorem]{Lemma}
\newtheorem{corollary}[theorem]{Corollary}

\newtheorem{definition}[theorem]{Definition}
\newtheorem{assumption}[theorem]{Assumption}
\theoremstyle{remark}
\newtheorem{remark}[theorem]{Remark}

\DeclareMathOperator{\Var}{Var}

\title[Asymptotic Behaviour of a Nonlocal Stochastic Fractional Equation]{Long-time behaviour of a nonlocal stochastic fractional reaction--diffusion equation arising in tumour dynamics}
\date{\today}
\author[N. I. Kavallaris]{Nikos I. Kavallaris  \orcidlink{0000-0002-9743-8636}}
\address{
 Department of Mathematics and Computer Science, Karlstad University, Karlstad, Sweden}
\email{nikos.kavallaris@kau.se} 

\thanks{Corresponding author: N.I. Kavallaris (\texttt{nikos.kavallaris@kau.se})}
\author[S. Sankar]{Subramani Sankar}
\address{Department of Mathematics, School of Basic Sciences,
Indian Institute of Technology Bhubaneswar, Khurda 752 050, India}
\email{subusankar27@gmail.com}

\author[M. T. Mohan]{Manil T. Mohan} 
\address{Department of Mathematics, Indian
	Institute of Technology Roorkee, Roorkee 247 667, India}
\email{maniltmohan@ma.iitr.ac.in}

\author[C. V. Nikolopoulos]{Christos V. Nikolopoulos \orcidlink{0000-0002-1235-5058}}
\address{Department of Mathematics, University of Aegean, Karlovassi, Samos, Greece}
\email{cnikolo@aegean.gr}

\author[S. Karthikeyan]{Shanmugasundaram Karthikeyan}
\address{Department of Mathematics, Periyar University, Salem, India}
\email{karthi@periyaruniversity.ac.in}

\date{\today}
\allowdisplaybreaks

\usepackage{xcolor}
\allowdisplaybreaks

\usepackage[pagewise]{lineno}

\usepackage{graphicx,eurosym}
\usepackage{hyperref}
\usepackage{mathtools}

\colorlet{darkblue}{blue!50!black}

\hypersetup{
	colorlinks,%
	citecolor=blue,%
	filecolor=red,%
	linkcolor=darkblue,%
	urlcolor=blue,%
	pdfnewwindow=true,%
	pdfstartview={FitH}
}

\usepackage{graphicx,amscd,mathrsfs,wrapfig,mathrsfs,lipsum}
\usepackage{eufrak}
\usepackage{tikz}
\usepackage{multicol}
\usepackage{caption}
\usetikzlibrary{arrows}

\colorlet{darkblue}{red!100!black}

\let\originalleft\left
\let\originalright\right
\renewcommand{\left}{\mathopen{}\mathclose\bgroup\originalleft}
\renewcommand{\right}{\aftergroup\egroup\originalright}

\keywords{Semilinear SPDEs; fractional Laplacian operators; fractional Brownian motion;  finite-time blow-up; blow-up-time bounds; blow-up rate; blow-up probability; global-in-time existence; tumour growth  modelling and dynamics.}
\subjclass[2020]{Primary: 60H15; 35R60; 35B44.
Secondary: 35K58; 60G22; 35B40; 92C50.}
\begin{document}
	\maketitle \setcounter{page}{1}
	\numberwithin{equation}{section}
	
	\newtheorem{app}{Appendix:}
	\newtheorem{ack}{Acknowledgement:}

	\begin{abstract}
We introduce a stochastic nonlocal
reaction--diffusion model arising in tumour dynamics. Spatial dispersal is described by the fractional
Laplacian, accounting for anomalous diffusion and long--range relocation events.
The system is perturbed by multiplicative fractional Brownian motion (fBm) with  Hurst parameter $H>1/2$,
which we interpret as temporally correlated fluctuations in the tumour
microenvironment and host response.

We first establish well--posedness and identify parameter regimes leading to
global--in--time solutions or finite--time blow--up under general multiplicative
fractional noise. We then focus on linear multiplicative noise
and, via a Doss--Sussmann transformation, derive sharper results: explicit lower
and upper bounds for the blow--up time together with quantitative estimates of
the blow--up probability, clarifying how noise intensity can accelerate
progression or, on favourable paths, enhance suppression consistent with
extinction (loss of viability). Finally, one--dimensional simulations illustrate
the interplay between anomalous diffusion, fractional noise, and the nonlocal
reaction mechanism in shaping the long--time dynamics.

\end{abstract}

\tableofcontents
	\section{Introduction}\label{sec1}
The main aim of this work is to analyse the long--time dynamics of the following
nonlocal stochastic fractional reaction--diffusion equation driven by fractional
noise:
\begin{equation}\label{b1}
\left\{
\begin{aligned}
du(t,x)-\Delta_{\alpha}u(t,x)\,dt
&=\Bigg[ \delta \int_{D}u^{q}(t,y)\,dy+\gamma u(t,x)- \beta u^{p}(t,x) \Bigg]dt
+\sigma\big(u(t,x)\big)\,dB^{H}(t), \quad x\in D,\ t>0,\\
u(0,x)&=f(x), \quad x\in D,\\
u(t,x)&=0, \quad t>0,\ x\in \mathbb{R}^{d}\setminus D.
\end{aligned}
\right.
\end{equation}
Here $\Delta_{\alpha}:=(-\Delta)^{\alpha/2}$, $0<\alpha\le 2$, denotes the
fractional Laplacian (see Section~\ref{sec3} for the precise definition and the
functional-analytic setting), and $B^{H}=(B^{H}(t))_{t\ge 0}$ is a fractional
Brownian motion with Hurst index $H\in[1/2,1)$ defined on a filtered probability
space $(\Omega,\mathcal{F},(\mathcal{F}_{t})_{t\ge 0},\mathbb{P})$.
The parameters $\beta,\delta,\gamma>0$ and $p,q>1$ are fixed, the initial datum
$f$ is assumed to be nonnegative and not identically zero (e.g.\ $f\in C^{2}(D)$),
and the noise coefficient $\sigma(\cdot)$ is a prescribed function, see again Section \ref{sec3} for more details.

The investigation of \eqref{b1} is motivated by tumour dynamics, where $u(t,x)$ counts for tumour density. Nonlocal
diffusion operators such as $\Delta_{\alpha}$ arise as continuum limits of
random walks with heavy-tailed jump distributions and provide effective
macroscopic descriptions of L\'evy-type (superdiffusive) transport observed in
biological systems; see, for instance,
\cite{chen2019_nonlocal_migration,meerschaert_scheffler2017_ctrw,tsai2012_fractal_tumour, zaburdaev2015_levy_walks}.
Moreover, the multiplicative perturbation $\sigma(u(t,x))\,dB^{H}(t)$ models
temporally correlated environmental fluctuations (long-memory noise), which may
represent persistent variability in growth conditions, immune pressure, or
microenvironmental support; cf.\ \cite{kwossek2025_sde_fbm}.
A more detailed discussion of the link between \eqref{b1} and metastatic tumour
dynamics is provided in Section~\ref{sec2}. From a mathematical viewpoint, our goal is to characterise parameter regimes in
which the solution to \eqref{b1} exists globally in time and, possibly,
decays to the trivial state (interpreted biologically as tumour extinction),
versus regimes in which the solution exhibits finite--time blow--up, corresponding
to a loss of regulatory control and uncontrolled tumour growth.

From a theoretical standpoint, blow--up and nonexistence phenomena play a
fundamental role in the qualitative analysis of nonlinear evolution equations
and are equally valuable from an applied viewpoint.  Classical contributions by
Kaplan~\cite{kaplan} and Fujita~\cite{fuji1966,Fuji1968} initiated systematic
methods for proving finite--time blow--up for broad classes of nonlinear
parabolic problems, and these ideas have since inspired a large body of work in the deterministic setting.  

In the stochastic setting the analytical picture is substantially more subtle and is still developing. In contrast to deterministic parabolic equations-whose evolution is entirely prescribed by the initial datum-solutions to SPDEs are shaped by the interaction between nonlinear reaction mechanisms and random fluctuations. This interplay may either accelerate or inhibit singularity formation, shift critical thresholds separating global existence from blow--up, and produce genuinely pathwise phenomena with no deterministic analogue. These features, together with applications in physics, biology, and finance, have motivated a growing literature on rigorous blow--up criteria, quantitative bounds on explosion times, and estimates of blow--up probabilities for nonlinear SPDEs.

Within this direction, Chow~\cite{chow09,chow11} studied finite--time blow--up in the mean $L^{p}$-sense for semilinear parabolic equations driven by space--time noise, adapting Kaplan’s eigenfunction method~\cite{kaplan}. Variants of this strategy were subsequently developed for nonlocal semilinear models by Kavallaris and collaborators~\cite{K15,KY20}. Related eigenfunction-based arguments have also been employed in the presence of anomalous diffusion, including fractional Laplacian operators; see Wang~\cite{wang}.

For linear multiplicative noise that is purely time dependent, Dozzi and coauthors~\cite{doz2010,doz2013,car2013} analysed the long--time behaviour of semilinear parabolic SPDEs using a Doss--Sussmann type transformation. This device reduces the original SPDE to a pathwise random parabolic PDE, thereby allowing one to adapt deterministic blow--up techniques in the spirit of Fujita~\cite{fuji1966,Fuji1968} and Kaplan~\cite{kaplan} to obtain estimates for the blow--up time. In addition, in~\cite{doz2010,doz2013,car2013} lower and upper bounds for the blow--up probability are derived through estimates of exponential functionals of Brownian motion; see, for instance,~\cite{yor2005,revuz1999,yor2001}. Subsequently, related ideas were extended to coupled systems~\cite{Eug2017, li,smk1} and to nonlocal stochastic models~\cite{KY20,liang}. Concerning semilinear SPDEs driven by fractional Brownian motion, Dozzi and collaborators~\cite{dozfrac2013} established two--sided bounds for the explosion time of a semilinear equation perturbed by a one--dimensional fractional Brownian motion $B^{H}=(B^{H}(t))_{t\ge 0}$ with Hurst index $H>\tfrac12$. They also provided sufficient conditions ensuring either finite--time blow--up or global existence, together with quantitative estimates for the probability of finite--time explosion. More recently, Dung~\cite{dung} developed an alternative approach based on tail estimates for exponential functionals of fractional Brownian motion, leading to explicit upper and lower bounds on the finite--time blow--up probability in terms of the Hurst parameter $H\in(0,1)$. This approach was again extended to case of coupled systems~\cite{smk2} and nonlocal equations~\cite{smk3}.

To place the present work in context, we briefly recall several contributions on stochastic parabolic equations involving fractional diffusion. Chang and Lee \cite{chang2012} studied regularity properties for SPDEs driven by a fractional Laplacian, while Liu and Tudor \cite{liu2017} established existence and uniqueness of mild solutions for stochastic fractional heat equations. Related well--posedness results for space--time fractional SPDEs on bounded domains were obtained by Foondun, Mijena and Nane \cite{foon2016}. These papers provide a solid analytical foundation for stochastic evolution equations with nonlocal generators, but they are primarily concerned with existence, uniqueness, and regularity, rather than with sharp long--time scenarios such as extinction versus finite--time blow--up in the presence of nonlocal reactions and temporally correlated noise.

On the other hand, quantitative blow--up analysis has been developed mainly in settings with classical diffusion and Brownian forcing. In the Brownian case ($H=\tfrac12$) with the standard Laplacian ($\alpha=2$), Liang and Zhao \cite{liang} investigated finite--time blow--up for a nonlocal stochastic reaction--diffusion equation by means of a random transformation reducing the SPDE to a pathwise random PDE, from which bounds on the blow--up time and estimates of the blow--up probability can be derived. For related semilinear models with $\alpha=2$ and simplified reaction structures (e.g.\ particular choices of parameters such as $\delta=1$ and vanishing linear terms), blow--up criteria and probability estimates were obtained in \cite{wang}, and more recently in \cite{smk3} for variants with different nonlocal terms.

The above literature indicates a clear gap: a systematic blow--up/extinction theory for nonlocal reaction--diffusion equations combining fractional spatial dispersal ($0<\alpha<2$) with multiplicative fractional Brownian forcing ($H>\tfrac12$) is still largely missing. The model \eqref{b1} is tailored to this regime and brings together three strongly interacting features: $(i)$ anomalous L\'evy--type diffusion via the fractional Laplacian, whose analysis relies on Dirichlet forms, heat-kernel and spectral estimates, and nonlocal maximum/comparison principles \cite{BogdanGrzywnyRyznar2010,BucurValdinoci2016,CaffarelliSilvestre2007,CaffarelliSilvestre2009,ChenKimSong2010,DiNezzaPalatucciValdinoci2012,Kwasnicki2017}; (ii) a genuinely nonlocal reaction term through the spatial integral, \cite{KS18}, which induces global coupling and modifies classical Kaplan--Fujita blow--up mechanisms and their stochastic extensions \cite{chow09,chow11,fuji1966,Fuji1968,kaplan,K15,KS18,KY20,liang,wang}; and (iii) long--memory multiplicative noise driven by fBm, requiring a pathwise (Young/rough) integration framework and producing persistent excursions that may shift stability thresholds and alter blow--up times and probabilities \cite{MaslowskiNualart2003,Mishura2008,Nualart2006,NualartRascanu2002,TindelTudorViens2003,Zahle1998}. Consequently, \eqref{b1} serves as a natural and challenging testbed at the intersection of modern nonlocal PDE theory and stochastic analysis, and the goal of this paper is to develop a detailed long--time dynamical picture-global existence versus finite--time blow--up, two--sided blow--up time bounds, and quantitative blow--up probability estimates-thereby extending the Brownian/$\alpha=2$ framework of \cite{liang} to the fractional diffusion/fractional noise setting.

The paper is organised as follows. In Section~\ref{sec2} we discuss the
biological motivation for the fractional nonlocal stochastic model
\eqref{b1}. Section~\ref{sec3} collects the analytical preliminaries and the
functional framework. In Section~\ref{sec:noise-general} we study the long--time dynamics
under general multiplicative fractional noise for $\frac{1}{2}<H<1$, identifying parameter regimes leading to
global-in-time solutions or finite--time blow--up (Theorem~\ref{thm:SF-gb1-fBm}),
as well as almost sure exponential decay in a strongly dissipative setting
(Theorem~\ref{cor:sto-exp-fBm-gb1}). Section~\ref{sec5} focuses on linear
multiplicative noise: using a Doss--Sussmann type transformation we reduce
\eqref{b1} to a random PDE and derive two-sided bounds for the blow--up time
(Theorems~\ref{t2} and~\ref{thm5.1}), estimates on the blow--up rate
(Theorem~\ref{thm:two-sided-rate}), and quantitative lower bounds for the
blow--up probability (Theorem~\ref{thm5.2}). In Section~\ref{sec6} we address
the special case $H\in(\tfrac34,1)$, where $B^H$ is equivalent in law to a
standard Brownian motion, leading to simplified probability bounds
(Theorems~\ref{thm6.1} and~\ref{thm6.2}). The paper concludes with a numerical
study of \eqref{b1} in Section~\ref{nss} based on a finite-difference discretisation, illustrating
the interplay between anomalous diffusion, fractional noise, and the nonlocal
reaction term in the resulting long--time dynamics.

\section{Motivation}\label{sec2} 
Apart from its intrinsic mathematical significance, equation~\eqref{b1} is also of interest due to its connection with cancer dynamics, as explained in detail below. In this framework, \(u(t,x)\) denotes the tumour cell density at time \(t > 0\) and position \(x \in D\) within a biological tissue, and model~\eqref{b1} describes its evolution under specific biological conditions
\begin{align*}
\partial_t u
= -\,\annotopc{(-\Delta)^{\alpha/2}u}{L\'evy migration}
   + \annubc{\delta\!\int_{D} u^{q}(y,t)\,dy}{nonlocal signalling}
   + \annubc{\gamma u}{proliferation}
   - \annotopc{\beta u^{p}}{crowding / therapy}
   + \annubc{\sigma(u)\,\mathrm{d}B^H(t)}{fBm noise}.
\end{align*}
The fractional Laplacian $(-\Delta)^{\alpha/2}$ represents long-range L\'evy-type tumour cell migration and invasion along heterogeneous tissue structures, in particular for metastatic cancer cells, consistent with experimental and modelling evidence for anomalous motility and nonlocal cancer cell dispersal \cite{chen2019_nonlocal_migration, H18, tsai2012_fractal_tumour,volpert2025_reaction_diffusion_waves}, see also  Fig.~\ref{fig:cells}.
The nonlocal integral term $\delta\int_D u^q(x)dx$ models domain-wide signalling or systemic feedback such as cytokines, growth factors or immune mediators, in line with nonlocal reaction–diffusion descriptions of biomedical signalling and immunological interactions \cite{bessonov2022_nonlocal_biomed,nonlocal_tumor_growth2025, kazmierczak2025_tissue_growth}.
Local reaction dynamics $(\gamma u - \beta u^p)$ describe proliferation constrained by crowding, competition, or therapy, as in classical and nonlocal tumour reaction–diffusion models \cite{mathematical_oncology_review2025, volpert2025_reaction_diffusion_waves}.
The multiplicative fractional Brownian motion noise $\sigma(u)\,\mathrm{d}B^H$ captures temporally correlated fluctuations in vascularisation, oxygenation, or immune pressure, generalising earlier stochastic cancer models driven by (white) Brownian noise to the long-memory setting \cite{fahimi2020_chaos_stoch_cancer,fritz2025_stoch_cahn_hilliard, kwossek2025_sde_fbm,sweilam2025_frac_stoch_rd_cancer}. Finally,  the Dirichlet exterior condition encodes a hostile
environment outside $D$ (e.g.\ resection boundary or lethal habitat).

This formulation integrates \emph{two cutting-edge ingredients that, to our knowledge, have not been combined in a single continuum tumour model}:
(i) nonlocal spatial and mean-field couplings (capturing systemic signalling and long-range migration), extending the growing body of nonlocal reaction-diffusion models in biomedical applications and cancer invasion \cite{bessonov2022_nonlocal_biomed, chen2019_nonlocal_migration,nonlocal_tumor_growth2025};
and (ii) stochastic forcing with long-range temporal correlations (fractional Brownian noise), which goes beyond the classical white-noise perturbations considered in tumour SPDEs and stochastic ODE models \cite{fahimi2020_chaos_stoch_cancer,fritz2025_stoch_cahn_hilliard,kwossek2025_sde_fbm,sweilam2025_frac_stoch_rd_cancer}.
Their interplay raises fundamentally new questions on tumour dynamics that go beyond classical local PDE models without memory, nonlocality or correlated noise.

Mathematically, the model contributes to the emerging theory of nonlocal SPDEs with memory and fractional noise, sitting at the intersection of spatial fractional diffusion limits \cite{meerschaert_scheffler2017_ctrw}, nonlocal reaction–diffusion equations in biomedicine \cite{bessonov2022_nonlocal_biomed,nonlocal_tumor_growth2025}, and stochastic tumour growth models \cite{fritz2025_stoch_cahn_hilliard,sweilam2025_frac_stoch_rd_cancer}.
Biologically, it clarifies conditions under which tumour growth remains bounded (therapy success, strong competition or inhibitory signalling) versus regimes leading to blow--up (uncontrolled systemic proliferation), complementing recent fractional and stochastic extensions of tumour and tumour–immune models \cite{alinei_poiana2023_frac_oncology,gundogdu2025_time_frac_cancer,mathematical_oncology_review2025, west2022_fractal_tapestry}.
This dichotomy provides a rigorous framework to interpret the onset of aggressive cancer dynamics and the potential stabilising role of spatial nonlocality, memory and correlated fluctuations.

\begin{figure}[h!]
\begin{minipage}[m]{0.63\textwidth}
\begin{tikzpicture}
\draw (-3.76,-1.6) node[below] {\small 0};
\draw (-2.4,-1.6) node[below] {\small 500};
\draw (-0.9,-1.6) node[below] {\small 1000};
\draw (1.05,-1.6) node[below] {\small 0};
\draw (2.5,-1.6) node[below] {\small 500};
\draw (3.9,-1.6) node[below] {\small 1000};

\draw (-3.8,-1.55) node[left] {\small 0};
\draw (-3.8,0) node[left] {\small 500};
\draw (-3.8,1.5) node[left] {\small 1000};
\draw (1.0,-1.55) node[left] {\small 0};
\draw (1.0,0) node[left] {\small 500};
\draw (1.0,1.5) node[left] {\small 1000};

\draw (-4.5,0) node[above,rotate=90] {Distance ($\mu$m)};
\draw (0.3,0) node[above,rotate=90] {Distance ($\mu$m)};

\draw (-2.4,1.6) node[above] {$(a)$ Metastatic cells};
\draw (2.5,1.6) node[above] {$(b)$ Non-metastatic cells};

\clip (-3.9,-1.6) -- (-0.5,-1.6) -- (-0.5,1.5) -- (0.8,1.5) -- (0.8,-1.6) -- (4,-1.6) -- (4,1.5) -- (-3.9,1.5) -- cycle;
\draw (0,0) node {\includegraphics[scale=1,page=4,trim=40mm 220mm 80mm 20mm,clip]{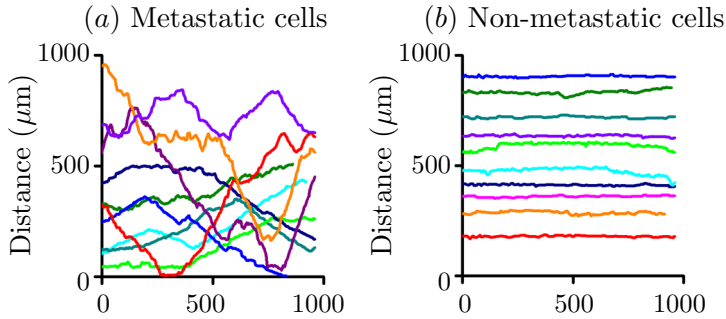}};
\end{tikzpicture}
\end{minipage}
\hfill
\begin{minipage}[m]{0.35\textwidth}
\caption{$\hspace{-3pt}(a)$ Metastatic cell movements: short bursts interspersed with long unidirectional journeys (local and nonlocal diffusion). $(b)$ Non-metastatic cell movements: more random or ``jiggly'' paths. Image adapted from Huda et al.~\cite{H18}.}
\label{fig:cells}
\end{minipage}
\end{figure}

\section{Mathematical Preliminaries}\label{sec3}
\subsection*{Notation and function spaces}	

We work in the Hilbert space
$
\mathcal{H} := L^2(D),
$
equipped with the norm
\[
\|u\|_2:= \|u\|_{L^2(D)}= \left( \int_D |u(x)|^2\,\mathrm{d}x \right)^{1/2},
\]
and inner product
\[
\langle u, \varphi \rangle_\mathcal{H} = \int_D u(x)\,v(x)\,\mathrm{d}x, \qquad u,v \in \mathcal{H}=L^2(D).
\]
Also $\|u\|_p$ stands for the norm of $L^p(D)$ for any $1\leq p\leq \infty.$ 

The canonical duality pairing between $H$ and its dual is denoted by
$\langle \cdot,\cdot \rangle$. For $u \in \mathcal{H}$ and $\varphi \in\mathcal{H}$ we write
\[
u(t,\varphi) := \langle u(t), \varphi \rangle_\mathcal{H}
= \int_D u(t,x)\,\varphi(x)\,\mathrm{d}x.
\]
We denote by
$
V := H^{\alpha/2}_0(D)$
the energy space associated with the fractional Laplacian $\Delta_{\alpha}$ with Dirichlet exterior boundary conditions. The space $V$ is a Hilbert space endowed with the norm
\[
\|u\|_{H^{\alpha/2}(D)}^2
:= \|u\|_{L^2(D)}^2
   + \int_D \int_D 
     \frac{|u(x) - u(y)|^2}{|x-y|^{d+\alpha}} \,\mathrm{d}x\,\mathrm{d}y,
\qquad u \in H^{\alpha/2}(D),
\]
and corresponding inner product $(\cdot,\cdot)_V$.

Throughout this work we will also use the Banach space
\[
\mathcal{B}^{\nu,2}\big([0,t], \mathcal{H}\big),
\]
which consists of all measurable functions $u : [0,t] \to L^2(D)$ for which the norm
\[
\|u\|_{\nu,2}^2
= \left( \operatorname*{ess\,sup}_{s \in [0,t]} \|u(\cdot,s)\|_2 \right)^2
  + \int_0^t \left( \int_0^s \frac{\|u(\cdot,s) - u(\cdot,r)\|_2}{(s-r)^{\nu+1}}\,\mathrm{d}r \right)^2 \mathrm{d}s
\]
is finite, i.e.\ $\|u\|_{\nu,2} < +\infty$, for $0 < \nu < \tfrac{1}{2}$, cf.\ \cite{ Zahle}.

\subsection*{Fractional Laplacian and relevant structures}
\label{subsec:frac-Lap}

Let $D\subset\mathbb{R}^d$, $d\ge1$, be a bounded domain with $C^{2}$ boundary and
let $\alpha\in(0,2)$. Throughout the paper we denote by
$
\Delta_\alpha := (-\Delta)^{\alpha/2}
$
the  fractional Laplacian of order $\alpha$ with exterior Dirichlet
condition. For sufficiently regular functions $u:\mathbb{R}^d\to\mathbb{R}$ we use
the pointwise singular integral representation
\begin{equation}\label{eq:frac-lap-def}
(-\Delta)^{\alpha/2} u(x)
=
C_{d,\alpha}\,\mathrm{P.V.}\!\int_{\mathbb{R}^d}
\frac{u(x) - u(y)}{|x-y|^{d+\alpha}}\,\mathrm{d}y,
\qquad x\in\mathbb{R}^d,
\end{equation}
where $C_{d,\alpha}>0$ is a normalising constant and $\mathrm{P.V.}$ denotes the
Cauchy principal value. On the domain $D$ we consider the operator with
homogeneous Dirichlet exterior condition,
\[
u=0 \quad \text{a.e.\ in }\mathbb{R}^d\setminus D,
\]
understood in the usual weak (energy) sense via the space
$H^{\alpha/2}_0(D)$; see, e.g., \cite{Kwasnicki2017}.

\subsubsection*{Semigroup operator}
It is well known that $-(-\Delta)^{\alpha/2}$ is the infinitesimal generator of
the rotationally symmetric $\alpha$-stable L\'evy process $(X_t)_{t\ge0}$ on
$\mathbb{R}^d$, whose transition semigroup
\[
(T_t f)(x) := \mathbb{E}_x\big[f(X_t)\big],
\qquad t\ge0,\ x\in\mathbb{R}^d,
\]
acts as a convolution operator with a radially symmetric density
$p_t(x-y)$; see, for instance,
\cite{Kwasnicki2017}. In this probabilistic framework, the \emph{Dirichlet} fractional Laplacian on $D$
is realised as the generator of the process killed upon exiting $D$. More
precisely, if
\[
\tau_D := \inf\{t>0 : X_t \notin D\},
\]
then
\[
(S_\alpha(t)f)(x)
:= \mathbb{E}_x\big[f(X_t)\,\mathbf{1}_{\{t<\tau_D\}}\big],
\qquad t\ge 0,\ x\in D,
\]
defines a strongly continuous, sub-Markovian, self-adjoint contraction semigroup
on $H$, whose generator is the (self-adjoint realisation of) $\Delta_\alpha$
with zero exterior condition; see, for example,
\cite{FarwigIwabuchi2024, Kwasnicki2017,Stinga2018}.
\subsubsection*{Variational structure and spectrum}
We denote by $\mathcal{E}_\alpha$ the Dirichlet form of the Dirichlet fractional
Laplacian $A_\alpha=-\Delta_\alpha$ on $L^2(D)$. For $u$ extended by $0$ outside $D$,
\begin{equation}\label{eq:dirichlet-form}
  \mathcal{E}_\alpha(u,u)
  := \frac{C_{d,\alpha}}{2}
     \int_{\mathbb{R}^d}\!\int_{\mathbb{R}^d}
       \frac{(u(x)-u(y))^2}{|x-y|^{d+\alpha}}\,\mathrm{d}x\,\mathrm{d}y,
\end{equation}
with domain
\begin{equation*}\label{eq:dirichlet-domain}
  Dom(\mathcal{E}_\alpha)
  := H^{\alpha/2}_0(D)
  = \big\{u\in L^2(\mathbb{R}^d): u=0 \text{ a.e.\ on }\mathbb{R}^d\setminus D,\ 
      \mathcal{E}_\alpha(u,u)<\infty\big\}.
\end{equation*}
Then $\mathcal{E}_\alpha$ is a densely defined, closed, symmetric, positive form and
$A_\alpha$ is the associated nonnegative self-adjoint operator; see, e.g.,
\cite{Kwasnicki2017, Stinga2018}.

Since $D$ is bounded, $A_\alpha$ has compact resolvent and thus a purely discrete
spectrum
\begin{equation*}\label{eq:eigs}
0<\lambda_1<\lambda_2\le\lambda_3\le\cdots,\qquad
\lambda_k\to\infty,
\end{equation*}
with corresponding real eigenfunctions $\{\phi_k\}_{k\ge1}$ forming an orthonormal
basis of $\mathcal{H}$.

The eigenvalues admit the standard variational characterisations:
\begin{equation}\label{eq:lambda1-var}
\lambda_1
= \inf_{\substack{u\in H^{\alpha/2}_0(D)\\ u\neq 0}}
    \frac{\mathcal{E}_\alpha(u,u)}{\|u\|_{L^2(D)}^2},
\qquad
\lambda_k
= \inf_{\substack{F\subset H^{\alpha/2}_0(D)\\ \dim F = k}}
    \ \sup_{\substack{u\in F\\ u\neq0}}
    \frac{\mathcal{E}_\alpha(u,u)}{\|u\|_{L^2(D)}^2},
\ k\ge2.
\end{equation}
In particular, for any $u\in H^{\alpha/2}_0(D)$ the Poincaré-type bound
\begin{equation}\label{eq:Poincare}
\mathcal{E}_\alpha(u,u)\ge \lambda_1 \|u\|_{L^2(D)}^2
\end{equation}
holds.

 
 Also, the
\emph{first eigenpair} $(\lambda_1,\phi_1)$ 
satisfying:
\begin{equation} \label{a3} \left\{ \begin{aligned} -\Delta_{\alpha} \varphi_1(x)&=\lambda_{1} \varphi(x), \ \ x\in D,\\  \varphi_1(x)&=0, \ \ x\in \R^d\setminus D, \end{aligned} \right. \end{equation}
is simple and strictly positive in
$D$ in the following sense: any eigenfunction associated with $\lambda_1$ does
not change sign, and there exists a representative such that
\[
\phi_1(x) > 0, \quad \text{for a.e.\ }x\in D, \qquad
\phi_1(x) = 0, \quad \text{for a.e.\ }x\in\mathbb{R}^d\setminus D,
\]
 see \cite{JarohsWeth2016, ServadeiValdinoci2014_Visc}.
This is a consequence of the variational characterisation of $\lambda_1,$ cf. \eqref{eq:lambda1-var},
combined with the strong maximum principle and Harnack-type inequalities for
$\Delta_{\alpha}$; see, among many others,
\cite{JarohsWeth2016, ServadeiValdinoci2014_Visc}.
For definiteness, we fix the normalisation
\begin{equation}\label{eq:first-eig-norm}
\phi_1>0 \text{ in }D, \qquad
\int_D \phi_1(x)\,\mathrm{d}x = 1.
\end{equation}

	\subsection*{Stochastic integral, It\^o's formula and Malliavin calculus} 
    Throughout this  work we consider a filtered probability space
$(\Omega,\mathcal{F},\{\mathcal{F}_t\}_{t\ge0},\mathbb{P})$ that supports
a one-dimensional fractional Brownian motion $(B^H(t))_{t\ge0}$ with Hurst
index $H\in(1/2,1)$, so that Young or pathwise stochastic integrals with
respect to $B^H$ are well-defined; see, e.g.,
\cite{MaslowskiNualart2003,NualartRascanu2002,TindelTudorViens2003}.
    \subsubsection*{Stochastic Integral and It\^o's formula}  
    We assume that the diffusion term \(\sigma (\cdot)\) is non-negative and satisfies
\begin{equation}\label{gc}
\sigma\in C^1(\mathbb{R}_+),\quad \sigma(0)=0,\quad
|\sigma(r)-\sigma(s)|\le L_\sigma |r-s|,\quad
|\sigma(r)|\le C_\sigma(1+r),\quad\forall r,s\ge0,
\end{equation}
for some constants \(L_\sigma,C_\sigma>0\).
    
Under condition \eqref{gc} and the requirement that $u \in \mathcal{B}^{\nu, 2} \bigl( [0,t],\mathcal{H} \bigr)$ for some  $\nu\in(1-H,1/2)$ 
 we guarantee that the stochastic integral $\int_0^t \sigma(u(s)) dB^H(s)$ exists as a generalized Stieltjes integral in the Young sense (see  \cite[Proposition 1]{NV06}  and \cite{Zahle}).
        
    In the case of fBm It\^o's formula is given by the following:
        \begin{theorem}[{\cite[Lemma 2.7.1]{Mishura2008}}] \label{ito1} Let $(B^{H}(t))_{t \geq 0}$ be an fBm with $H \in (1/2, 1),\ F \in C^{2}(\mathbb{R}).$ Then for any $t>0,$
	    \begin{align}
		F(B^{H}(t))=F(0)+\int_{0}^{t} f'(B^{H}(s))dB^{H}(s). \nonumber 
	    \end{align}
        \end{theorem}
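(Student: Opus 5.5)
We prove the change-of-variables formula $F(B^H(t))=F(0)+\int_0^t F'(B^H(s))\,dB^H(s)$ pathwise, with the integral understood as a Riemann--Stieltjes (Young) integral. (In the displayed formula $f'$ should be read as $F'$.) The key fact is that for $H>1/2$ the paths of $B^H$ are almost surely $\theta$-H\"older continuous on $[0,t]$ for every $\theta<H$. This follows from Kolmogorov's continuity criterion applied to the Gaussian increments, since $\mathbb{E}|B^H(s)-B^H(r)|^{2}=|s-r|^{2H}$. Fix such an $\omega$ and some $\theta\in(1/2,H)$. Then $g(s):=B^H(s,\omega)$ is $\theta$-H\"older with $\theta>1/2$.

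First we check that the integral exists. Since $F\in C^2$, the derivative $F'$ is Lipschitz on the compact range $g([0,t])$. Hence $s\mapsto F'(g(s))$ is also $\theta$-H\"older. Because $\theta+\theta>1$, Young's theorem gives existence of $\int_0^t F'(g(s))\,dg(s)$ as a limit of Riemann--Stieltjes sums $\sum_i F'(g(s_i))(g(s_{i+1})-g(s_i))$ along partitions with vanishing mesh. It also gives the Young--Lo\`eve estimate
\[
\Bigl|\int_a^b h\,dg-h(a)(g(b)-g(a))\Bigr|\le C\,[h]_\theta[g]_\theta |b-a|^{2\theta}.
\]
Equivalently, this is the generalized Stieltjes integral of Z\"ahle referred to above via fractional derivatives of order $\nu\in(1-H,1/2)$.

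Next we telescope and Taylor-expand. For a partition $0=s_0<\dots<s_n=t$,
\[
F(g(t))-F(0)=\sum_i\bigl[F(g(s_{i+1}))-F(g(s_i))\bigr]=\sum_i F'(g(s_i))\Delta_i g+R_n,
\]
where $\Delta_i g=g(s_{i+1})-g(s_i)$ and $|R_n|\le \tfrac12\|F''\|_{L^\infty(g([0,t]))}\sum_i|\Delta_i g|^2$. The H\"older bound gives $\sum_i|\Delta_i g|^2\le [g]_\theta^2\sum_i|s_{i+1}-s_i|^{2\theta}\le [g]_\theta^2\, t\,|\pi|^{2\theta-1}\to0$ as the mesh $|\pi|\to0$, since $2\theta>1$. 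Passing to the limit, the Riemann sums converge to the Young integral and $R_n\to0$, which yields the formula for almost every $\omega$.

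The only point needing care is this vanishing of the quadratic variation of $B^H$. It is exactly where $H>1/2$ enters: for $H=1/2$ the sum $\sum_i|\Delta_i g|^2$ converges to $t$ and produces the It\^o correction term. It is also where the H\"older regularity (via Kolmogorov) is essential. Once $\theta>1/2$ is secured, the remaining steps are routine.
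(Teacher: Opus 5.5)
Your proof is correct. The paper gives no proof of this statement: it quotes Mishura's Lemma~2.7.1 \cite{Mishura2008}, with the integral understood as the pathwise generalized Stieltjes (Z\"ahle) integral. So there is no in-paper route to compare against; your argument is a valid self-contained replacement for the citation.

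The steps all hold:
\begin{itemize}
\item Kolmogorov's criterion gives $\theta$-H\"older paths for some $\theta\in(1/2,H)$.
\item Young's theorem gives convergence of the left-point Riemann--Stieltjes sums of $F'(B^H)$ against $B^H$.
\item The Taylor remainder is bounded by $\tfrac12\|F''\|_{L^\infty(g([0,t]))}[g]_\theta^2\,t\,|\pi|^{2\theta-1}\to0$. The intermediate points stay in $g([0,t])$ because the continuous image of an interval is an interval.
\end{itemize}

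The one thing you leave implicit is the link to the paper's integral. You should cite the identification (Z\"ahle \cite{Zahle1998}) that, for $h\in C^\lambda$ and $g\in C^\mu$ with $\lambda+\mu>1$, the fractional-calculus integral coincides with the Riemann--Stieltjes/Young limit, rather than just calling the two ``equivalent.'' You are also right that $f'$ in the display should read $F'$ and that $F(0)=F(B^H(0))$.
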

        
\subsubsection*{Malliavin calculus}Let $\mathcal{S}$ denote the space of step functions on the interval $[0,T]$, and define $\mathcal{H}$ as the closure of $\mathcal{S}$ with respect to the inner product
$
\langle \mathbf{1}_{[0,s]}, \mathbf{1}_{[0,t]} \rangle_{\mathcal{H}} := R^H(s,t),
$
where $R^H(s,t)$ denotes the auto-covariance function given by 
\begin{align*}
	R_{H}(t,s):=\mathbb{E}\left[ B^{H}(t)B^{H}(s)\right] =\frac{1}{2} \left( s^{2H}+t^{2H}-|t-s|^{2H} \right).
\end{align*}
We begin by defining the map $\mathbf{1}_{[0,t]} \mapsto B_t^H=B^H(t)$ on $\mathcal{S}$ and then extend it to an isometry from $\mathcal{H}$ into the Gaussian space $\mathcal{H}_1(B_t^H)$ generated by the fractional Brownian motion. This isometry is denoted by $\phi \mapsto B_t^H(\phi)$.
 We now proceed to define the Malliavin derivative $D$ with respect to fBM. Let us consider a smooth cylindrical functional of the form $F = f(B_t^{H}(\phi))$, where $\phi \in \mathcal{H}$ and $f \in C_{b}^{\infty}(\mathbb{R})$. The Malliavin derivative $\mathcal{D}F$ is an $\mathcal{H}$-valued random variable defined via the duality relation
\begin{eqnarray*}
    \langle \mathcal{D}F, h \rangle_{\mathcal{H}} := f'(B_t^{H}(\phi)) \langle \phi, h \rangle_{\mathcal{H}} 
    = \left. \frac{d}{d\epsilon} f\big(B_t^{H}(\phi) + \epsilon \langle \phi, h \rangle_{\mathcal{H}} \big) \right|_{\epsilon = 0},
\end{eqnarray*}
which can be understood as a generalization of the directional derivative along the paths of fBm.

The operator $\mathcal{D}$ is closable as a mapping from $L^{p}(\Omega)$ into $L^{p}(\Omega; \mathcal{H})$ for any $p \geq 1$, and it allows for the construction of Sobolev-type spaces on the Wiener space. Among these, the space ${\mathbb{D}}^{1,2}$ is of particular importance. It is defined as the closure of the set of smooth cylindrical random variables with respect to the norm
\begin{eqnarray*}
    \| F \|_{{\mathbb{D}}^{1,2}} = \left( \mathbb{E}[|F|^2] + \mathbb{E}[\| \mathcal{D}F \|_{\mathcal{H}}^2] \right)^{1/2}.
\end{eqnarray*}
In our framework, the Malliavin derivative will be interpreted as a stochastic process $\{\mathcal{D}_t F : \, t \in [0,T] \}$ (see \cite{Nualart2006}, Section 1.2.1 and Chapter 5).

A useful tail estimate used in a vital manner across the manuscript is given below:
\begin{lemma}\label{dung2}\cite[Lemma 2.1]{dung2}
		Let $Z$ be a centered random variable in $\mathbb{D}^{1,2}$. Assume that there exists a non-random constant $M_{0}$ such that $$ \int_{0}^{T} (\mathcal{D}_{r} Z)^{2} dr \leq M_{0}^{2},\ \mathbb{P}\mbox{-a.s.}$$
		Then the following estimate for tail probabilities holds:
		$$\mathbb{P}( |Z| \geq x ) \leq 2 e^{-\frac{x^{2}}{2M_{0}^{2}}},\ x>0.$$
	\end{lemma}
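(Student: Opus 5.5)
This is a Gaussian-type concentration bound for a centred variable whose Malliavin derivative has an almost surely bounded $L^2([0,T])$ norm. I would prove it through the exponential moment $\mathbb{E}[e^{\theta Z}]$, using the Clark--Ocone representation in its Gaussian-interpolation form, and then finish with a Chernoff bound. The steps, in order, are as follows.

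\textbf{Step 1 (reduction).} By density, and since both sides behave well under $L^2$ limits, it suffices to treat $Z$ smooth and bounded. For a general $Z\in\mathbb{D}^{1,2}$ I would truncate with $Z_n=\psi_n(Z)$, where $\psi_n$ is a smooth bounded cutoff with $|\psi_n'|\le1$. The chain rule then gives $|\mathcal{D}_rZ_n|\le|\mathcal{D}_rZ|$, so the hypothesis $\int_0^T(\mathcal{D}_rZ_n)^2dr\le M_0^2$ is preserved. The truncation may destroy centring, so I would track the small mean $\mathbb{E}Z_n\to0$ and pass to the limit at the end using Fatou's lemma.

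\textbf{Step 2 (key identity).} I would use the covariance identity $\mathbb{E}[ZG(Z)]=\mathbb{E}\big[G'(Z)\langle \mathcal{D}Z,-\mathcal{D}L^{-1}Z\rangle\big]$ for centred $Z$, where $L$ is the Ornstein--Uhlenbeck generator. Via Mehler's formula, $-\mathcal{D}L^{-1}Z=\int_0^\infty e^{-s}P_s\mathcal{D}Z\,ds$. The Ornstein--Uhlenbeck semigroup $P_s$ is a contraction in $L^\infty(\Omega;L^2[0,T])$, so the Cauchy--Schwarz inequality gives
\[
\big|\langle \mathcal{D}Z,-\mathcal{D}L^{-1}Z\rangle\big|\le \|\mathcal{D}Z\|\int_0^\infty e^{-s}\|P_s\mathcal{D}Z\|\,ds\le M_0^2 \quad \text{a.s.}
\]
The norms here are those of $L^2[0,T]$, matching the hypothesis as stated with $dr$.

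\textbf{Step 3 (Laplace transform and Chernoff bound).} Put $\Lambda(\theta)=\mathbb{E}[e^{\theta Z}]$ and take $G(z)=e^{\theta z}$ in Step 2. This gives $\Lambda'(\theta)=\mathbb{E}[Ze^{\theta Z}]\le\theta M_0^2\Lambda(\theta)$ for $\theta\ge0$. Since $\Lambda(0)=1$, integrating yields $\Lambda(\theta)\le e^{\theta^2M_0^2/2}$. Markov's inequality then gives $\mathbb{P}(Z\ge x)\le e^{-\theta x+\theta^2M_0^2/2}$, and choosing $\theta=x/M_0^2$ gives $e^{-x^2/(2M_0^2)}$. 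Applying the same argument to $-Z$, which satisfies the same hypothesis, and adding the two one-sided bounds gives the factor $2$.

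\textbf{Main obstacle.} The main difficulty is Step 2 in the fBm setting. There the natural Malliavin norm is that of the reproducing kernel space $\mathcal{H}$, not $L^2[0,T]$, so I need to interpret $\mathcal{D}_r$ in the statement consistently. The plan is to read the lemma, as in \cite{dung2}, with $\mathcal{D}$ the derivative with respect to the underlying Brownian motion $W$ in the Volterra representation $B^H(t)=\int_0^tK_H(t,s)dW(s)$. In that case $\mathcal{H}$-isometry reduces everything to the Brownian case, and the Mehler contraction argument of Step 2 applies verbatim. If $\mathcal{D}$ is instead meant with respect to $B^H$, I would push it through the isometry $K_H^*$ before applying Step 2.
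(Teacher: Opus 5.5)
The paper does not prove this lemma. It is imported verbatim from \cite[Lemma 2.1]{dung2}, so there is no internal argument to compare with. Your proposal is a correct, self-contained proof along the standard Nourdin--Viens route:
\begin{enumerate}
\item the covariance identity $\mathbb{E}[ZG(Z)]=\mathbb{E}[G'(Z)\langle \mathcal{D}Z,-\mathcal{D}L^{-1}Z\rangle]$;
\item the Mehler bound $|\langle \mathcal{D}Z,-\mathcal{D}L^{-1}Z\rangle|\le M_0^2$ a.s.;
\item the Herbst-type inequality $\Lambda'(\theta)\le\theta M_0^2\Lambda(\theta)$;
\item a Chernoff bound applied to $\pm Z$.
\end{enumerate}
Truncating with $\psi_n(Z)$, $|\psi_n'|\le 1$, then recentring and using Fatou, is the right way to justify the exponential moments. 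You do not need the ``smooth'' part of Step 1. Approximating by smooth cylindrical functionals would not preserve the almost sure bound on $\|\mathcal{D}Z\|$ anyway, and boundedness alone suffices for the covariance identity.

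What your version adds over the bare citation is that it forces a decision about which norm the hypothesis involves. The paper's Section~\ref{sec3} defines $\mathcal{D}$ as an $\mathcal{H}$-valued derivative with respect to $B^H$. Yet when verifying (A3) it writes $D_\theta B^H(s)=K_H(s,\theta)$ and $\int_0^s K_H(s,\theta)^2\,d\theta=s^{2H}$. That is exactly your reading: $\mathcal{D}$ is the derivative with respect to $W$ in $B^H(t)=\int_0^tK_H(t,s)\,dW(s)$.

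One caveat concerns your fallback. If $\mathcal{D}$ is taken with respect to $B^H$, passing through $K_H^*$ only works when the hypothesis reads $\|\mathcal{D}Z\|_{\mathcal{H}}\le M_0$. The literal quantity $\int_0^T(\mathcal{D}_rZ)^2\,dr$ for the $B^H$-derivative does not control $\|\mathcal{D}Z\|_{\mathcal{H}}$ with constant one. For example, $Z=B^H(T)$ gives $\int_0^T(\mathcal{D}_rZ)^2\,dr=T$ but $\Var(Z)=T^{2H}>T$ for $T>1$, so the stated tail bound fails. This confirms that the Brownian reading you chose is the only consistent one with the constant $M_0^2$ as written.
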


 \subsection*{Concepts of solutions}\label{subsec:concepts-solutions}

In this subsection we recall the notions of weak and mild solutions for the
stochastic evolution equation \eqref{b1} driven by the fractional Laplacian
with Dirichlet exterior condition. 

\begin{definition}[Weak solution]\label{def:weak-solution} Let
$\tau:\Omega\to(0,\infty]$ be a stopping time.  
A process
\[
u:\Omega\times[0,\tau)\to \mathcal{H},\qquad (t,\omega)\mapsto u(t,\omega),
\]
is called a \emph{weak solution} to \eqref{b1} on $[0,\tau)$ if:
\begin{enumerate}[label=(\roman*)]
\item $u$ is $\{\mathcal{F}_t\}_{t\ge0}$–adapted and has almost surely
continuous trajectories in $\mathcal{H}$, i.e.
\[
u\in C([0,\tau);\mathcal{H}),\quad\mathbb{P}\text{-a.s.},
\]
and the nonlinearities are integrable in the sense that
\[
u\in L^p_{\rm loc}\big([0,\tau);L^p(D)\big)\cap
   L^q_{\rm loc}\big([0,\tau);L^q(D)\big),\qquad\mathbb{P}\text{-a.s.},
\]
for the exponents $p,q$ in \eqref{b1}. Moreover, for every $\varphi\in V$ the
scalar processes
\[
t\mapsto u(t,\varphi):=\langle u(t),\varphi\rangle_{\mathcal{H}},\qquad
t\mapsto \langle \sigma(u(t)),\varphi\rangle_{\mathcal{H}}
\]
are almost surely continuous on $[0,\tau)$ and Hölder continuous of some order
$\theta>1-H$ on compact subintervals of $[0,\tau)$, so that the Young integral
with respect to $B_t^H$ is well defined.

\item For every test function $\varphi\in V$ and every $t\in(0,\tau)$, the
following identity holds:
\begin{align}\label{eq:weak-formulation-b1}
u(t,\varphi)
&= u(0,\varphi)
  + \int_0^t u\big(s,(A_\alpha+\gamma I)\varphi\big)\,\mathrm{d}s \nonumber \\
&\quad
  + \delta\int_0^t \Bigg(
      \int_D u^q(s,y)\,\mathrm{d}y
    \Bigg)
    \Bigg(
      \int_D \varphi(x)\,\mathrm{d}x
    \Bigg)\mathrm{d}s
  - \beta\int_0^t \langle u^p(s),\varphi\rangle_\mathcal{H}\,\mathrm{d}s \nonumber\\
&\quad
  + \int_0^t \big\langle \sigma(u(s)),\varphi\big\rangle_\mathcal{H}\,\mathrm{d}B^H(s), \quad \mathbb{P}\mbox{–a.s.,}
\end{align}
where $u^p(s)$ and $u^q(s)$ are understood pointwise in $x\in D$ and extended
by $0$ outside $D$, $I$ is the identity on $\mathcal{H}$, and the last integral is taken
in the Young (pathwise Riemann–Stieltjes) sense using the Hölder regularity
from \textup{(i)}.
\end{enumerate}

In particular, \eqref{eq:weak-formulation-b1} is the weak formulation of
\eqref{b1} with homogeneous Dirichlet exterior condition.
\end{definition}

This notion is the natural variational/weak formulation in the spirit of
\cite{DaPratoZabczyk1992, PrevotRoeckner2007} adapted to the
nonlocal operator $A_\alpha$ and the fractional noise, cf.\
\cite{MaslowskiNualart2003,TindelTudorViens2003}.

\begin{definition}[Mild solution]\label{def:mild-solution}
Let $\mathcal{H}:=L^2(D)$ and let $(S_\alpha(t))_{t\ge0}$ be the $C_0$-semigroup on $\mathcal{H}$
generated by the Dirichlet fractional Laplacian $A_\alpha=-\Delta_\alpha$
(cf.\ Section~\ref{subsec:frac-Lap}).  
Let $f\in \mathcal{H}$ be a bounded, non-negative initial datum and let
$\tau:\Omega\to(0,\infty]$ be a stopping time.

An $\{\mathcal{F}_t\}_{t\ge0}$–adapted process
\[
u:\Omega\times[0,\tau)\to \mathcal{H}
\]
is called a \emph{mild solution} to \eqref{b1} on $[0,\tau)$ if:

\begin{enumerate}[label=(\roman*)]
\item $u$ has almost surely continuous trajectories in $\mathcal{H}$, and
\[
u\in L^{p}_{\mathrm{loc}}\big([0,\tau);L^p(D)\big)
   \cap L^{q}_{\mathrm{loc}}\big([0,\tau);L^q(D)\big),
\quad \mathbb{P}\text{-a.s.},
\]
so that all Bochner integrals below are well defined. Moreover, for every
$\varphi\in \mathcal{H}$ the scalar process
\[
t\mapsto \langle S_\alpha(t-\cdot)\sigma(u(\cdot)),\varphi\rangle_\mathcal{H}
\]
is almost surely Hölder continuous of some order $\theta>1-H$ on compact
subintervals of $[0,\tau)$, so that the Young (pathwise) stochastic integral
with respect to $B_t^H$ is well defined.

\item For every $t\in(0,\tau)$ the following \emph{mild formulation} holds
$\mathbb{P}$–a.s.\ in $\mathcal{H}$:
\begin{align}
u(t)
&= S_\alpha(t)f
 + \int_0^t S_\alpha(t-r)\Bigg[
      \delta\Bigg(\int_D u^q(r,y)\,\mathrm{d}y\Bigg)\mathbf{1}_D
    + \gamma u(r)
    - \beta u^p(r)
   \Bigg]\mathrm{d}r \nonumber \\
&\quad
 + \int_0^t S_\alpha(t-r)\sigma\big(u(r)\big)\,\mathrm{d}B^H(r), \label{eq:mild-formulation-b1}
\end{align}
where $\mathbf{1}_D$ denotes the constant function $x\mapsto1$ on $D$, the
powers $u^p$ and $u^q$ are taken pointwise in $x\in D$ and extended by $0$
outside $D$, and the last term is a stochastic convolution in $\mathcal{H}$ with respect
to $B_t^H$, defined pathwise in the Young sense (cf.\
\cite{MaslowskiNualart2003,TindelTudorViens2003}).
\end{enumerate}
\end{definition}
The above notions of weak (Definition~\ref{def:weak-solution}) and mild
(Definition~\ref{def:mild-solution}) solutions are consistent with the standard
evolution-equation framework for semilinear SPDEs in Hilbert spaces (see, e.g.,
\cite{DaPratoZabczyk1992, PrevotRoeckner2007}) and with the
existing literature on equations driven by fractional Brownian motion,
cf.\ \cite{MaslowskiNualart2003,TindelTudorViens2003}.

\medskip

\subsubsection*{Equivalence of weak and mild formulations}
Under the above regularity assumptions and suitable Lipschitz and polynomial
growth conditions, cf. \eqref{gc}, on the nonlinearities
\[
u \mapsto u^p,\qquad u\mapsto u^q,\qquad u\mapsto \sigma(u),
\]
weak and mild solutions of \eqref{b1} on $[0,\tau)$ are equivalent.\\
\noindent 
More precisely, one has:
\begin{itemize}
\item Any mild solution in the sense of Definition~\ref{def:mild-solution} is a
weak solution in the sense of Definition~\ref{def:weak-solution}. This follows
by testing the mild formulation \eqref{eq:mild-formulation-b1} against
$\varphi\in V$, using the semigroup identity
\[
\frac{\mathrm{d}}{\mathrm{d}t} S_\alpha(t-r)\varphi
= A_\alpha S_\alpha(t-r)\varphi,
\]
and integrating by parts in time, together with standard stochastic Fubini
arguments for Young integrals; see, e.g.,
\cite[Chapter ~4]{DaPratoZabczyk1992}, \cite[Chapter ~3]{PrevotRoeckner2007}.

\item Conversely, if $u(\cdot,\cdot)$ is a weak solution and the semigroup $(S_\alpha(t))_{t\ge0}$ is analytic and contractive on $H$ (which holds for the Dirichlet fractional Laplacian, cf.\ \cite{FarwigIwabuchi2024,Stinga2018}), then Duhamel’s
formula and the variation-of-constants method yield the mild representation
\eqref{eq:mild-formulation-b1} from the weak formulation
\eqref{eq:weak-formulation-b1}; see, for instance,
\cite[Proposition ~3.7]{IGCR} and \cite[Chapter ~5]{DaPratoZabczyk1992} in similar settings.
\end{itemize}







	The following lemma will be useful for further sections in the sequel. 
	\begin{lemma}\label{l2} \cite[Lemma 1]{doz2020} Let $(B^{H}(t))_{t \geq 0} $ be a one-dimensional fractional Brownian motion with Hurst parameter $0<H<1$ defined on a probability space $\left( \Omega, \mathscr{F}, \mathbb{P} \right).$ 
    \begin{itemize}
         \item[(i)] If $\nu>0,$ then $$\mathbb{P}\left( \int_{0}^{\infty} e^{ B^{H}(s)-\nu s}   ds < \infty\right)=1.$$ 
         \item[(ii)] If $\nu<0,$ then $$\mathbb{P}\left( \int_{0}^{\infty} e^{ B^{H}(s)-\nu s} ds = \infty\right)=1.$$ 
    \end{itemize}
	\end{lemma}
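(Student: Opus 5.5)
The plan is to reduce both parts to the long-time behaviour of $B^H$ relative to a linear drift, namely the almost sure law of large numbers
\[
\lim_{s\to\infty}\frac{B^H(s)}{s}=0\quad\mathbb{P}\text{-a.s.}
\]
Once we have this, the integrand $e^{B^H(s)-\nu s}$ behaves like $e^{-\nu s+o(s)}$, and the sign of $\nu$ decides everything.

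\textbf{Law of large numbers.} Since $B^H(n)\sim N(0,n^{2H})$, for fixed $\varepsilon>0$ the Gaussian tail gives
\[
\mathbb{P}(|B^H(n)|>\varepsilon n)\le 2e^{-\varepsilon^2 n^{2-2H}/2},
\]
which is summable in $n$ because $2-2H>0$. Borel--Cantelli then yields $|B^H(n)|\le\varepsilon n$ for all large integers $n$, almost surely.

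To pass from integers to all $s$ I will control the oscillation
\[
M_n:=\sup_{s\in[n,n+1]}|B^H(s)-B^H(n)|.
\]
By stationarity of increments, $M_n$ has the law of $\sup_{[0,1]}|B^H|$. This supremum has Gaussian tails by Fernique's theorem, or by Borell--TIS, or via Lemma~\ref{dung2} applied to the supremum. Hence $\sum_n\mathbb{P}(M_n>\varepsilon n)<\infty$, and Borel--Cantelli again gives $M_n\le\varepsilon n$ eventually. Taking $\varepsilon\downarrow0$ along a countable sequence proves the law of large numbers.

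\textbf{Part (i), $\nu>0$.} Fix $\omega$ in the full-measure set and choose $\varepsilon=\nu/2$. There is then $s_0(\omega)$ with $B^H(s)\le \nu s/2$ for $s\ge s_0$. Since $B^H$ is continuous, $\int_0^{s_0}e^{B^H(s)-\nu s}\,ds<\infty$. On the tail, the integrand is bounded by $e^{-\nu s/2}$, which is integrable, so the whole integral is finite almost surely.

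\textbf{Part (ii), $\nu<0$.} Here I take $\varepsilon=|\nu|/2$, which gives $B^H(s)-\nu s\ge |\nu|s/2\to\infty$ for $s\ge s_0(\omega)$. The integrand therefore tends to infinity, so the integral diverges almost surely. (Any $\varepsilon<|\nu|$ would suffice.)

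\textbf{Main obstacle.} The only delicate step is the interpolation between integer times, that is, the uniform tail bound for the supremum of the increments. This relies on stationarity of the increments together with a Gaussian concentration bound for $\sup_{[0,1]}|B^H|$. Alternatively, the self-similarity and ergodicity of fBm could be used, but the Borel--Cantelli route is the most direct. The argument is valid for every $H\in(0,1)$, as the statement requires.
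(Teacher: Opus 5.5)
Your proof is correct. The paper gives no proof of this lemma; it only cites \cite[Lemma 1]{doz2020}.

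Your reduction is the same mechanism the paper uses elsewhere: almost sure sublinear growth $B^H(s)=o(s)$, then comparing the integrand with $e^{-\nu s/2}$ (resp.\ $e^{|\nu|s/2}$) on a tail interval $[s_0(\omega),\infty)$. In Lemma~\ref{lem:subexp-fBm} and in the proof of Theorem~\ref{thm5.2}(i), the paper obtains $|B^H_t|=o(t)$ by invoking the law of the iterated logarithm for fBm \cite{law,Mishura2008}.

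The real difference is that you prove the growth bound from scratch. You combine Gaussian tails of $B^H(n)\sim N(0,n^{2H})$ with Borel--Cantelli at integer times, and use stationarity of increments to control the oscillation on $[n,n+1]$. This is more elementary and self-contained than citing the LIL. It also makes clear that only the weak statement $B^H(s)/s\to0$ is needed, not the sharp $t^H\sqrt{2\log\log t}$ envelope, and that this holds for every $H\in(0,1)$.

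One small point about the interpolation step. Of the tools you list, use Fernique or Borell--TIS. Lemma~\ref{dung2} does not apply directly to $\sup_{[0,1]}|B^H|$, because that variable is not centred, and showing it lies in $\mathbb{D}^{1,2}$ with a deterministic bound on its derivative would need a separate argument. In fact you only need $\mathbb{E}\,M<\infty$ for $M:=\sup_{[0,1]}|B^H|$. Since every $M_n$ has the law of $M$, you get $\sum_{n\ge1}\mathbb{P}(M_n>\varepsilon n)=\sum_{n\ge1}\mathbb{P}(M>\varepsilon n)\le \mathbb{E}\,M/\varepsilon$, which is finite.
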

	\section{General Noise: global existence vs finite--time blow--up}\label{sec:noise-general}

We begin by analysing the long-time behaviour in the presence of a general
multiplicative fractional perturbation \(\sigma(u)\,dB_t^H\), that is, we study
the nonlocal reaction--diffusion model \eqref{b1}. Motivated by the deterministic
counterpart (cf.\ \cite[Theorem ~43.1]{SoupletQuittner2007}), we establish the following two results.

\begin{theorem}\label{thm:SF-gb1-fBm}
Assume $f\in L^\infty(D)$, $f\ge0$, and the hypotheses \eqref{gc} on $\sigma$. Then, for almost every $\omega$, there exists a unique
nonnegative mild solution
\[
u(\omega, \cdot)\in C\big([0,T_{\max}(\omega));L^\infty(D)\big)
\]
of \eqref{b1}, satisfying \eqref{eq:mild-formulation-b1}.
The solution enjoys a pathwise comparison principle:
if $f\le g$ a.e.\ in $D$, and $v$ is the solution with initial data $g$, then
\[
u(\cdot,t,\omega)\le v(\cdot,t,\omega)\quad\text{for all }t<T_{\max}(\omega),
\ \text{a.s.}
\]

Moreover:
\begin{enumerate}[label=(\roman*)]
\item If $\beta=0$, $\delta>0$, $q>1$, and $f\not\equiv0$, then blow--up occurs
in finite--time $\tau$ with strictly positive probability. 
\item If $\beta>0$ and $p>q$, then solutions are global almost surely:
$T_{\max}(\omega)=+\infty$ for almost every $\omega$.
\item If $\beta>0$, $\delta>0$ and $0<p<q$, then sufficiently large initial data
lead to blow--up in finite--time with strictly positive probability. More
precisely, there exists $R>0$ such that if
\[
y_0:=\int_D f(x)\phi_1(x)\,dx \ge R,
\]
then the corresponding solution satisfies
$T_{\max}(\omega)<\infty$ with strictly positive probability.
\end{enumerate}
\end{theorem}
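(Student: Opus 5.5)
We plan to prove the theorem in four stages: local well-posedness, comparison, and then the three long-time regimes via the first-eigenfunction functional.

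\textbf{Local existence and comparison.} We work pathwise. For a.e.\ $\omega$ the map $t\mapsto B^H(t,\omega)$ is H\"older of every order $<H$, so the Young integral is defined pathwise. On the ball $\{\|u\|_{\infty}\le M\}$ we truncate the nonlinearities $u^p,u^q$ at level $M$, which makes them globally Lipschitz. We then solve the truncated mild equation \eqref{eq:mild-formulation-b1} by a Banach fixed-point argument in a Nualart--R\u{a}\c{s}canu type space $\mathcal{B}^{\nu,\infty}([0,T];L^\infty(D))$ with $\nu\in(1-H,1/2)$. The ingredients are:
\begin{itemize}
\item contractivity and positivity of $S_\alpha$ on $L^\infty$;
\item the Lipschitz bound \eqref{gc} on $\sigma$;
\item the standard fractional-integral estimate for the Young convolution, as in \cite{NV06,Zahle}.
\end{itemize}
Uniqueness follows from the same contraction estimate. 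Removing the truncation gives a maximal solution with $T_{\max}(\omega)=\sup_M\tau_M$ and the blow-up alternative $\limsup_{t\to T_{\max}}\|u(t)\|_\infty=\infty$.

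Nonnegativity and comparison come from a Doss--Sussmann reduction. We write $u=\Phi(\xi(t),B^H(t))$, where $\Phi$ is the flow of $\partial_z\Phi=\sigma(\Phi)$, which is monotone in its initial value. This is justified by the chain rule of Theorem~\ref{ito1}, which has no correction term. The reduction turns \eqref{b1} into a random nonlocal PDE whose reaction term is quasi-monotone, because the nonlocal term is nondecreasing in $u$. The comparison principle for $\Delta_\alpha$ with exterior Dirichlet data then applies pathwise. Since $\sigma(0)=0$, the function $0$ is a subsolution, which gives $u\ge0$.

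\textbf{(ii) Global existence.} We test the equation with $\phi_1$ and set $y(t)=\langle u(t),\phi_1\rangle$. To control $\|u\|_\infty$ we compare with a spatially constant supersolution, or with a supersolution built from the $\sigma$-flow, driven by the ODE
\[
dz=(\delta|D|z^q+\gamma z-\beta z^p)\,dt+\sigma(z)\,dB^H.
\]
After the Doss--Sussmann change, the drift $\delta|D|z^q+\gamma z-\beta z^p$ is bounded above for $p>q$, so $z$ cannot explode. The linear growth of $\sigma$ keeps the flow factor $\Phi$ finite on compact time intervals. Comparison then bounds $u$ by $z$, which gives $T_{\max}=\infty$ a.s.

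\textbf{(i) and (iii) Blow-up.} These follow Kaplan's eigenfunction method. Using \eqref{a3}, the normalisation \eqref{eq:first-eig-norm}, Jensen's inequality $\int_D u^q\ge |D|^{1-q}\big(\int_D u\big)^q$ and $\int_D u\ge y/\|\phi_1\|_\infty$, we obtain
\[
dy\ge\big(c_1y^q-(\lambda_1-\gamma)y-\beta\langle u^p,\phi_1\rangle\big)dt+\langle\sigma(u),\phi_1\rangle\,dB^H .
\]
In case (i), where $\beta=0$, this is a Riccati-type differential inequality with noise. In case (iii) the term $\beta\langle u^p,\phi_1\rangle$ is the real obstacle, since Jensen's inequality goes the wrong way for it. We would first try the upper bound $\langle u^p,\phi_1\rangle\le\|u\|_\infty^{p-1}y$, which is not closable. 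So instead we restrict to data of the form $f=k\phi_1$ and use comparison with an explicit subsolution $w=\Phi(a(t)\phi_1,B^H)$. Because $p<q$, for large amplitude $a$ the nonlocal term $\delta a^q\int\phi_1^q$ dominates $\beta a^p\phi_1^p$, and $a$ solves a blowing-up ODE.

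The noise is handled on the event
\[
\Omega_T=\Big\{\sup_{t\le T}|B^H(t)|\le 1\Big\},\qquad \mathbb{P}(\Omega_T)>0 \text{ by Gaussian support}.
\]
On $\Omega_T$ the Doss--Sussmann factors are bounded above and below. The ODE blow-up time therefore falls below $T$ once $y_0\ge R$, where $R$ depends only on these bounds and on $T$. Lemma~\ref{dung2} can quantify $\mathbb{P}(\Omega_T)$ if needed.

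The main obstacle is this comparison with the noise present. For general nonlinear $\sigma$ the transformed drift picks up factors $\partial_\xi\Phi$, and these must be controlled uniformly on $\Omega_T$. It is also not entirely clear whether hypotheses \eqref{gc} suffice; stronger structure, such as $\sigma$ linear or $\sigma(u)/u$ bounded, might be needed.
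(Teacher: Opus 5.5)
There is a genuine gap in the blow-up parts. Part (iii) concerns \emph{every} $f$ with $y_0=\int_D f\phi_1\,dx\ge R$. You call the Kaplan inequality non-closable and retreat to $f=k\phi_1$, so you prove a weaker statement. The obstacle is not real. Since $p<q$, H\"older's inequality with exponents $q/p$ and $q/(q-p)$ gives
\[
\int_D u^p\phi_1\,dx\le Q^{p/q}\,\|\phi_1\|_{L^{q/(q-p)}(D)},\qquad Q:=\int_D u^q\,dx .
\]
By Young's inequality, $\delta Q-\beta\int_D u^p\phi_1\,dx\ge\frac{\delta}{2}Q-C$. Combined with your bound $Q\ge c\,y^q$, this gives, for arbitrary data,
\[
dy\ge\big(c'y^q-(\lambda_1-\gamma)y-C\big)\,dt+\langle\sigma(u),\phi_1\rangle\,dB^H .
\]
This is the closure the paper's proof of (iii) needs. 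The paper also keeps general data and passes to \eqref{eq:y-ineq-short-iii} before an Osgood-type comparison, but its damping bound via \eqref{hi} runs the wrong way for $p>1$.

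Your fallback is justified only for linear $\sigma$. $\Delta_\alpha$ has no chain rule, so in general $(-\Delta)^{\alpha/2}\Phi(a\phi_1,B^H)\neq\partial_\xi\Phi\,\lambda_1 a\phi_1$. Hence $\Phi(a(t)\phi_1,B^H)$ is not shown to be a subsolution. The same defect hits your comparison step. For nonlinear $\sigma$, the Doss--Sussmann change produces the nonlinear operator $\xi\mapsto(\partial_\xi\Phi)^{-1}(-\Delta)^{\alpha/2}[\Phi(\xi,B^H)]$, not the linear one whose comparison principle you invoke. You would need a degenerate-elliptic viscosity argument, or smooth approximations of $B^H$ plus continuity of the Young solution map.

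Smallness of $\sup|B^H|$ on $\Omega_T$ also does not control the Young integral $\int\langle\sigma(u),\phi_1\rangle\,dB^H$. Its integrand is not a function of $y$, and its H\"older norm is uncontrolled as $u$ grows. For linear $\sigma$ the repair works: apply the H\"older closure to $J=\int_D v\phi_1$ with $v=e^{-\sigma B^H}u$. On your event $\Omega_T$ the weights $e^{(q-1)\sigma B^H}$ and $e^{(p-1)\sigma B^H}$ are bounded above and below. This yields (iii) for all data with $y_0\ge R$.

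For (i), your scheme gives blow-up only once $y_0\ge R$, whereas (i) claims it for every $f\not\equiv0$. For small data with $\lambda_1>\gamma$ the drift is dissipative, so the noise would have to push the solution into the large regime. Under \eqref{gc} alone that can fail: $\sigma\equiv0$ is admissible, and for small $\|f\|_\infty$ the usual small-data bootstrap on the mild formulation gives a global, decaying solution (it uses $\|S_\alpha(t)\|_{L^\infty\to L^\infty}\le Ce^{-\lambda_1 t}$; compare Corollary~\ref{mthm1}). The paper's proof of (i) avoids this only by citing an Osgood criterion for additive perturbations, which does not apply to multiplicative noise vanishing at $0$. Your closing doubt is therefore justified: (i) needs large data or nondegenerate (e.g.\ linear, $\sigma\neq0$) noise.

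Your treatment of (ii) takes a different and better route than the paper's, and it is sound. You compare with the spatially constant $Z(t,x)=z(t)$ on $\mathbb{R}^d$, which satisfies $(-\Delta)^{\alpha/2}Z=0$, $Z\ge0=u$ off $D$, and has nonlocal term $\delta|D|z^q$. The scalar Doss--Sussmann change, with $\partial_\eta\Phi\ge e^{-L_\sigma|B^H|}$ and drift bounded above, then rules out explosion of $z$. This controls $\|u(t)\|_\infty$, which is what the blow-up alternative requires. The paper bounds only $y$, and does so via $\int_D u^q\le c_q y^q$, which fails in general. You still need comparison with nonzero exterior data, which the smooth-path approximation provides.
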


\begin{proof}
\emph{(Existence, uniqueness, positivity, comparison).}
Fix $\omega$ and regard $t\mapsto B^H_t(\omega)$ as a deterministic
Hölder continuous path of exponent $\gamma<H$.
For $X=L^\infty(D)$, consider the map
\[
\mathcal{T}(u)(t)
:=S_\alpha(t)f
 +\int_0^t S_\alpha(t- \tau F(u(\tau))\,d\tau
 +\int_0^t S_\alpha(t-\tau)\sigma(u(\tau))\,dB^H_\tau,
\]
where
\[
F(u):=\delta\Big(\int_D u^q dx\Big)+\gamma u-\beta u^p.
\]
The semigroup $S_\alpha$ is analytic and contractive on $\mathcal{H}$ and
$L^\infty(D)$, and $F$ is locally Lipschitz on $L^\infty(D)$.
By \eqref{gc}, $\sigma$ is globally Lipschitz and of at most linear growth,
hence $t\mapsto\sigma(u(t))$ is Hölder continuous in $L^\infty(D)$ whenever
$t\mapsto u(t)$. For $H>\tfrac12$ and such integrands,
Young integration yields a well-defined stochastic convolution
$\int_0^t S_\alpha(t-\tau)\sigma(u(\tau))\,dB^H_\tau$ in $L^\infty(D)$ with
the usual estimates
(cf.\ \cite[Theorem ~2.1]{Zahle1998}, \cite[Theorem ~3.1]{NualartRascanu2002},
and the abstract SPDE frameworks in
\cite[Theorems ~3.1--3.2]{MaslowskiNualart2003},
\cite[Theorem ~3.3]{Li2015fBm} and \cite{Ralchenko2018,TindelTudorViens2003}).

For $T>0$ small enough, standard fixed-point arguments in
$C([0,T];L^\infty(D))$ show that $\mathcal{T}$ is a contraction on a ball,
yielding a unique mild solution on $[0,T]$. Iteration gives existence on
$[0,T_{\max}(\omega))$ for some maximal $T_{\max}(\omega)\in(0,\infty]$.

Positivity and comparison follow from the order-preserving properties of
$S_\alpha$ and of the drift $F$: if $u\ge0$, then $F(u)$ is increasing in $u$
for $\delta,\beta\ge0$ and $\phi_1>0$, and $\sigma$ satisfies $\sigma(0)=0$
with $\sigma(r)\ge0$, for $r\ge0$. The nonlocal operator $-\Delta_\alpha$
with zero exterior data enjoys weak and strong maximum principles and a
comparison principle, see e.g.\ the weak/strong maximum principles for the
fractional Laplacian in \cite[Theorems ~3.9--3.10]{BucurValdinoci2016} and the
comparison results in their Section~3.3.
This yields preservation of positivity and pathwise comparison for the mild
problem by standard monotone iteration.

\medskip
$(i)$ \emph{(Blow--up for $\beta=0$).}
Assume $\beta=0$, $\delta>0$, $q>1$. Test \eqref{b1} against the positive
first eigenfunction $\phi_1$ of $-\Delta_\alpha$:
\[
y(t):=\int_D u(t,x)\phi_1(x)\,dx.
\]
Using $-\Delta_\alpha\phi_1=\lambda_1\phi_1$ and integrating by parts,
\[
\begin{aligned}
dy(t) + \lambda_1 y(t)\,dt
 &= \delta\Big(\int_D u^q(t,y)\,dy\Big)\int_D\phi_1(x)\,dx\,dt
    +\gamma y(t)\,dt + \tilde\sigma(y(t))\,dB^H_t,
\end{aligned}
\]
where
\[
\tilde\sigma(y(t))
:= \int_D \sigma(u(t,x))\phi_1(x)\,dx,
\]
which inherits local Lipschitz and linear growth from $\sigma,$ cf. \eqref{gc}.
By Hölder inequality, \eqref{eq:first-eig-norm} and positivity of $\phi_1$, there exists $c_q>0$ such that
\[
\int_D u^q(t,y)\,dy \ge c_q\,y(t)^q,
\]
so
\[
dy(t) + (\lambda_1-\gamma) y(t)\,dt
 \ge \delta c_q y(t)^q\,dt + \tilde\sigma(y(t))\,dB^H_t.
\]
For large $y$, the term $\delta c_q y^q$ dominates the linear term
$(\lambda_1-\gamma)y$. Since $B^H_t$ has variance of order $t^{2H}$ and
satisfies a law of the iterated logarithm (hence $|B^H_t|=o(t)$ a.s.),
we have sublinear growth in time (see e.g.\ \cite{Mishura2008}).

We now apply an Osgood-type comparison argument. Consider the
deterministic integral equation
\[
z(t) = z_0 + \int_0^t \big(\delta c_q z(s)^q - (\lambda_1-\gamma)z(s)\big)\,ds
      + g(t),
\]
with a perturbation $g$ satisfying
\[
\limsup_{t\to\infty}\inf_{0\le h\le1}\big(g(t+h)-g(t)\big)=+\infty,
\]
(as is the case for paths of Brownian motion and, more generally,
for fractional Brownian motion; cf.\ the Osgood criteria in
\cite[Theorem ~2.1]{FoondunOsgoodSPDE}.
The classical Osgood test says that explosion in finite--time occurs iff
\(\int^\infty \frac{ds}{s^q}<\infty\), which holds for $q>1$.
By a comparison theorem for integral equations (see again
\cite{FoondunOsgoodSPDE}), $y$ dominates such a $z$ and thus
blows up in finite--time with strictly positive probability. 

\medskip
$(ii)$ \emph{(Global existence for $\beta>0$, $p>q$).}
Assume now $\beta>0$ and $p>q$. Testing as above and using Hölder
inequalities and \eqref{eq:first-eig-norm} there exist constants $c_p,c_q>0$ such that
\begin{equation}\label{hi}
\int_D u^p(t,x)\phi_1(x)\,dx \ge c_p y(t)^p,\qquad
\int_D u^q(t,y)\,dy \le c_q y(t)^q.
\end{equation}
We obtain
\[
dy(t)+(\lambda_1-\gamma)y(t)\,dt
\le \delta c_q y(t)^q\,dt -\beta c_p y(t)^p\,dt
   +\tilde\sigma(y(t))\,dB^H_t.
\]
For large $y$, the negative term $-\beta c_p y^p$ dominates
$\delta c_q y^q$ since $p>q$, yielding a pathwise upper bound by a
nonexplosive ODE. Using again an Osgood-type comparison (now in the
nonexplosive direction), one excludes finite--time blow--up and obtains
global existence. The sublinear growth of $B^H_t$ and the linear growth of
$\tilde\sigma$ prevent the noise from cancelling this dissipativity
(see also the SPDE versions of Osgood criteria in \cite{FoondunOsgoodSPDE}).
\medskip

$(iii)$ \emph{(Blow--up for $\beta>0$, $0<p<q$, large data).}
Assume $\beta>0$, $\delta>0$ and $0<p<q$. Let $\phi_1>0$ be the first
Dirichlet eigenfunction of $-\Delta_\alpha$ and set
\[
y(t):=\int_D u(t,x)\phi_1(x)\,dx.
\]
Testing \eqref{b1} against $\phi_1$ and arguing as in $(i)$ yields
\[
dy(t)+(\lambda_1-\gamma)y(t)\,dt
\ge \delta\Big(\int_D u^q(t,y)\,dy\Big)\!\int_D\phi_1(x)\,dx\,dt
   -\beta\int_D u^p(t,x)\phi_1(x)\,dx\,dt
   +\tilde\sigma(y(t))\,dB_t^H,
\]
with $\tilde\sigma(y)=\int_D\sigma(u)\phi_1 dx$ of at most linear growth.
By the same Hölder/eigenfunction bounds used above, cf. \eqref{hi}, the above yields
\begin{equation}\label{eq:y-ineq-short-iii}
dy(t)\ge \Big(\delta c_q y(t)^q-\beta c_p y(t)^p-(\lambda_1-\gamma)y(t)\Big)\,dt
        +\tilde\sigma(y(t))\,dB_t^H.
\end{equation}
Since $q>p$ and $q>1$, choose $R>0$ such that for all $r\ge R$,
\[
\delta c_q r^q-\beta c_p r^p-(\lambda_1-\gamma)r \ge \tfrac{\delta c_q}{2}\,r^q.
\]
Therefore, on the event $\{y(t)\ge R\ \text{for }t\in[0,\tau]\}$ we have
\[
dy(t)\ge \tfrac{\delta c_q}{2}y(t)^q\,dt+\tilde\sigma(y(t))\,dB_t^H,
\]
and an Osgood-type comparison for integral inequalities with additive
perturbations (cf.\ \cite{FoondunOsgoodSPDE}) implies that if
$y_0=\int_D f(x)\phi_1(x) dx\ge R$ is large enough, then $y$ (hence $u$) blows up in
finite--time with strictly positive probability, because
$\int^\infty s^{-q}\,ds<\infty$ for $q>1$.

\end{proof}

\begin{remark}\label{rem:phase-diagram-gb1}
\begin{enumerate}[label=(\roman*)]
\item \textbf{Fractional vs.\ classical diffusion and the blow--up time.}
In the eigenfunction reduction used in the proof of Theorem~\ref{thm:SF-gb1-fBm}, the diffusion enters only
through the linear term $(\lambda_1^{(\alpha)}-\gamma)y(t)$, where
$\lambda_1^{(\alpha)}$ is the first Dirichlet eigenvalue of $-\Delta_\alpha$
(with zero exterior condition). Replacing $-\Delta_\alpha$ by the classical
Laplacian $-\Delta$ corresponds to $\alpha=2$ and yields the damping
$(\lambda_1^{(2)}-\gamma)y(t)$.

In the blow--up regimes (i) and (iii), for large $y$ the scalar inequality has
the schematic form
\[
dy(t)\ \gtrsim\ \delta\,y(t)^q\,dt \;-\;(\lambda_1^{(\alpha)}-\gamma)\,y(t)\,dt
\;+\; \text{noise}.
\]
Thus, a \emph{larger} eigenvalue $\lambda_1^{(\alpha)}$ increases the linear
stabilising effect and therefore \emph{delays} blow--up (or raises the size of
the initial datum needed to trigger blow--up), whereas a \emph{smaller}
$\lambda_1^{(\alpha)}$ weakens this effect and typically \emph{accelerates}
blow--up.

A quantitative comparison between fractional and classical diffusion follows
from the eigenvalue bounds for the killed symmetric $\alpha$-stable generator:
if $\lambda_1^{(2)}$ is the first Dirichlet eigenvalue of $-\Delta$ in $D$ and
$\lambda_1^{(\alpha)}$ that of $-\Delta_\alpha$, then
\[
c\,(\lambda_1^{(2)})^{\alpha/2}\ \le\ \lambda_1^{(\alpha)}\ \le\
(\lambda_1^{(2)})^{\alpha/2},
\]
for some $c=c(D)\in(0,1]$, see \cite[(1.6) and Remark~3.5(i)]{ChenSong2005}.
Consequently, relative to the classical case $\alpha=2$, the linear damping
rate changes from $\lambda_1^{(2)}-\gamma$ to
$\lambda_1^{(\alpha)}-\gamma\approx (\lambda_1^{(2)})^{\alpha/2}-\gamma$.
Since the blow--up mechanism is governed by the competition between the
superlinear growth $\delta y^q$ and the linear stabilisation
$(\lambda_1^{(\alpha)}-\gamma)y$, this comparison shows how fractional
diffusion may either delay or accelerate blow--up \emph{depending on the size
of} $\lambda_1^{(2)}$ (equivalently, the geometry/scale of $D$):
\begin{itemize}
\item if $\lambda_1^{(2)}>1$ (typically \emph{small, strongly confined domains},
which in a tumour setting may be interpreted as \emph{smaller tumours} or an
early-stage lesion occupying a limited region), then
$(\lambda_1^{(2)})^{\alpha/2}<\lambda_1^{(2)}$ for $\alpha<2$. Hence replacing
classical diffusion by anomalous (fractional) diffusion reduces the linear
stabilisation $(\lambda_1-\gamma)y$ in the eigenfunction inequality, i.e.\ the
fractional Laplacian provides \emph{weaker effective damping} than the Laplacian.
In this regime, anomalous dispersal (heavy-tailed jumps/long-range spreading)
therefore tends to \emph{accelerate} the onset of explosive growth compared to
the classical diffusion model.

\item if $\lambda_1^{(2)}<1$ (typically \emph{large domains}, which may be
interpreted as \emph{larger tumours} occupying a larger spatial region), then
$(\lambda_1^{(2)})^{\alpha/2}>\lambda_1^{(2)}$ for $\alpha<2$. In this case the
fractional Laplacian yields \emph{stronger} linear stabilisation than the
classical Laplacian at the level of the principal mode, so anomalous diffusion
tends to \emph{delay} blow--up (or raise the threshold for explosive behaviour)
relative to the classical diffusion model.
\end{itemize}

In all cases, the eigenvalue comparison affects the time scale and thresholds
but not the qualitative blow--up/global dichotomy driven by the exponents.

\item \textbf{Impact of the fractional noise and comparison with the Brownian case.}
Since $H>\tfrac12$, the stochastic integral with respect to $B^H$ is treated
pathwise in the Young sense. The noise enters the reduced inequality through
the term $\tilde\sigma(y(t)\,dB_t^H$, where $\tilde\sigma(y)=\int_D\sigma(u)\phi_1\,dx$.
A key common feature with the standard Brownian case is that sample paths are
\emph{sublinear} in time: for fractional Brownian motion one has
$|B_t^H|=o(t)$ a.s.\ (Lemma~\ref{lem:subexp-fBm}), while for Brownian motion
$|W_t|=o(t)$ a.s.\ by the law of the iterated logarithm. Consequently, in both
settings the stochastic forcing grows strictly slower than the linear
dissipation term $(\lambda_1-\gamma)y(t)$ on long time scales, and the drift
dominance at large amplitudes is determined by the polynomial terms
$\delta y^q$ and $-\beta y^p$.

Regarding \emph{finite--time blow--up}, the comparison principles based on
Osgood-type criteria apply in both cases but the relevant integration theory
differs:
\begin{itemize}
\item For \textbf{Brownian noise} ($H=\tfrac12$), the eigenfunction projection
typically yields an It\^o SDE of the form
\[
dy(t)=\big(\delta c_q y(t)^q -\beta c_p y(t)^p-(\lambda_1-\gamma)y(t)\big)\,dt
      + \tilde\sigma(y(t))\,dW_t,
\]
and one uses It\^o calculus together with comparison/Osgood-type blow--up tests
for SPDEs/SDEs (see \cite{FoondunOsgoodSPDE} and references therein). In this
framework, the quadratic variation may contribute additional effective drift
terms for specific choices of $\tilde\sigma$ (e.g.\ linear multiplicative
noise), which can shift thresholds or rates.

\item For \textbf{fractional Brownian noise} ($H>\tfrac12$), the projected
equation is interpreted pathwise as a Young (or rough) differential equation,
with no It\^o correction term. The blow--up mechanism in cases (i) and (iii) is
therefore closer to a deterministic ODE with a continuous perturbation:
for large $y$, the superlinear term $\delta y^q$ dominates the lower-order
terms and the perturbation $\tilde\sigma(y)\,B_t^H$; the Osgood-type integral
comparison for perturbed integral inequalities remains applicable
\cite{FoondunOsgoodSPDE}, yielding blow--up with strictly positive
probability for sufficiently large data when $q>1$.
\end{itemize}
In contrast, in the dissipative regime $p>q$ (case (ii)), the higher-power
local damping $-\beta y^p$ dominates the source at large amplitudes, and the
same comparison framework (Brownian It\^o or fractional Young) yields a
nonexplosive upper bound and hence global existence.

\end{enumerate}
\end{remark}
\begin{remark}[Biological interpretation: explosive tumour progression]
\label{rem:tumour-blowup-compact}
Theorem~\ref{thm:SF-gb1-fBm} singles out regimes in which the model predicts
\emph{runaway tumour progression} in finite--time. In biological terms, blow--up
represents an uncontrolled escalation of tumour burden, used in tumour-growth
modelling as a proxy for catastrophic progression or therapy failure when
regulatory mechanisms cannot compensate increasing tumour-promoting feedback;
see, e.g., \cite{AndersonChaplain1998}.

Two clinically meaningful explosive scenarios emerge:
(i) if $\beta=0$, there is no effective density-dependent kill (no therapy or
ineffective clearance), so superlinear tumour-promoting feedback can drive
abrupt progression even from nontrivial initial burden; (iii) even when
$\beta>0$, if the kill mechanism grows too slowly with burden ($0<p<q$), then
large initial tumours can enter a runaway regime, reflecting a \emph{mismatch}
between treatment intensity and accelerating systemic support.

The multiplicative fractional noise $\sigma(u)\,dB_t^H$ can be interpreted as
temporally correlated environmental heterogeneity (e.g.\ fluctuating hypoxia,
immune pressure, or treatment adherence). The statement ``with positive
probability'' indicates a \emph{risk} of explosive progression: adverse
realisations of the environment may precipitate rapid deterioration, while other
realisations may delay it, a familiar phenomenon in stochastic population and
tumour models \cite{Allen2017, Hening2018}. Finally, blow--up also signals that
additional biological mechanisms (resource limitation, necrosis, saturation of
angiogenesis/therapy) would be needed to regularise the model at very high
densities \cite{AndersonChaplain1998}.
\end{remark}

\begin{remark}[Noise and (non-)eradication of blow--up: deterministic vs.\ stochastic dynamics]
\label{rem:noise-eradication-blowup}
Theorem~\ref{thm:SF-gb1-fBm} clarifies that, in our setting, multiplicative
fractional noise \emph{does not eradicate} the blow--up mechanisms already
present in the deterministic model, and it can in fact \emph{promote} blow--up in
supercritical regimes.

\smallskip
\noindent\emph{Deterministic benchmark.}
When $\sigma\equiv0$, \eqref{b1} reduces to a nonlocal reaction--diffusion PDE.
In particular:
\begin{itemize}
\item if $\beta=0$ and $\delta>0$, then the nonlocal source $\delta\int_D u^q dx$
with $q>1$ yields finite--time blow--up (classically, via Kaplan/Fujita-type
testing or Osgood criteria);
\item if $\beta>0$ and $p>q$, the damping term $-\beta u^p$ dominates the
nonlocal growth and solutions are global (subcritical regime);
\item if $\beta>0$ and $0<p<q$, sufficiently large data trigger blow--up
(supercritical regime).
\end{itemize}

\smallskip
\noindent\emph{Stochastic case.}
The conclusions in Theorem~\ref{thm:SF-gb1-fBm} show that adding
multiplicative fBm does not reverse these phase portraits:
\begin{itemize}
\item[(i)] In the \emph{no-damping} case $\beta=0$, blow--up still occurs with
strictly positive probability; the noise cannot compensate for the absence of
a stabilising mechanism.
\item[(ii)] In the \emph{subcritical} case $\beta>0$, $p>q$, global existence
holds almost surely: the dissipativity is strong enough to control the
stochastic perturbation under \eqref{gc}.
\item[(iii)] In the \emph{supercritical} case $\beta>0$, $0<p<q$, large initial
mass still yields blow--up with strictly positive probability; the noise does not
remove the superlinear amplification.
\end{itemize}

\smallskip
\noindent\emph{Heuristic mechanism and biological interpretation.}
Projecting onto the first eigenfunction, $$y(t)=\int_D u(t)\phi_1 \,dx,$$ leads to a
scalar inequality of the form \eqref{eq:y-ineq-short-iii} in which the drift
contains competing powers $y^q$ (growth) and $-y^p$ (damping), while the noise
appears through a multiplicative term $\tilde\sigma(y)\,dB_t^H$. In tumour
dynamics this term models temporally correlated microenvironmental variability
(e.g.\ fluctuating nutrient supply, immune surveillance, vascular remodelling,
or therapy efficacy) that scales with the current tumour burden. Along sample
paths with sustained favourable fluctuations, the multiplicative forcing can
amplify the growth phase and increase the likelihood (or accelerate the onset)
of blow--up in supercritical regimes, i.e.\ it may \emph{hinder eradication}.
Conversely, in the subcritical regime $p>q$ the deterministic dissipation is
dominant, and under the Lipschitz/linear-growth constraint \eqref{gc} the noise
cannot destabilise the dynamics, so extinction/persistence outcomes remain
qualitatively unchanged.

\smallskip
\noindent
Related Osgood-type comparison principles for explosive SPDEs (and for
integral equations with stochastic perturbations) can be found in
\cite{FoondunOsgoodSPDE}; see also the classical deterministic blow--up tests of
Kaplan--Fujita type \cite{fuji1966,Fuji1968, kaplan}.
\end{remark}

To establish exponential decay of global-in-time solutions, and hence interpret
the dynamics as tumour extinction in the biological setting, we first record
the following auxiliary lemma.
\begin{lemma}[Subexponential growth of fractional Brownian motion]\label{lem:subexp-fBm}
Let $(B_t^H)_{t\ge0}$ be a fractional Brownian motion with $H\in(0,1)$.
Almost surely, for every $\varepsilon>0$ there exists
$T_\varepsilon(\omega)$ such that
\[
|B_t^H(\omega)| \le \varepsilon t,\quad\text{for all }t\ge T_\varepsilon(\omega).
\]
In particular, for every $c>0$ and every $\varepsilon>0$,
\[
e^{\pm c B_t^H(\omega)} \le e^{\varepsilon t},\quad\text{for all }t\ge T_\varepsilon(\omega).
\]
\end{lemma}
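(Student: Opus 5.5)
We need to show $B^H_t/t\to0$ almost surely. The plan is a Borel--Cantelli argument along integer times, followed by control of the oscillation between consecutive integers using Gaussian tail bounds.

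\emph{Step 1 (integer times).} For $n\in\mathbb{N}$, $B^H_n\sim N(0,n^{2H})$. Hence for any fixed $\varepsilon>0$,
\[
\mathbb{P}(|B^H_n|>\varepsilon n/2)\le 2e^{-\varepsilon^2 n^{2-2H}/8}.
\]
This is summable in $n$ because $2-2H>0$.

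\emph{Step 2 (between integers).} By stationarity of increments, $\sup_{t\in[n,n+1]}|B^H_t-B^H_n|$ has the same law as $M:=\sup_{s\in[0,1]}|B^H_s|$. This supremum of a continuous centred Gaussian process is a.s.\ finite. By Fernique's theorem (or the Borell--TIS inequality) it has Gaussian tails, $\mathbb{P}(M>x)\le Ce^{-cx^2}$.

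Alternatively, one can apply Lemma~\ref{dung2} to $Z=M-\mathbb{E}M$. The Malliavin derivative of the supremum satisfies $\|\mathcal{D}M\|^2\le\sup_s\mathbb{E}|B^H_s|^2=1$, which gives the same tail bound.

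Consequently $\mathbb{P}\big(\sup_{t\in[n,n+1]}|B^H_t-B^H_n|>\varepsilon n/2\big)\le Ce^{-c\varepsilon^2n^2/4}$, which is also summable.

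\emph{Step 3 (combine).} By Borel--Cantelli, a.s.\ for all large $n$ we have $|B^H_t|\le\varepsilon n\le\varepsilon t$ for every $t\in[n,n+1]$. To obtain a single null set valid for \emph{every} $\varepsilon$, intersect over $\varepsilon=1/k$, $k\in\mathbb{N}$; for arbitrary $\varepsilon>0$ use monotonicity with $1/k<\varepsilon$.

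The second claim follows by replacing $\varepsilon$ with $\varepsilon/c$: for $t\ge T_{\varepsilon/c}$,
\[
e^{\pm cB^H_t}\le e^{c|B^H_t|}\le e^{\varepsilon t}.
\]

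\emph{Main obstacle.} The only delicate point is the uniform-in-$[n,n+1]$ tail bound of Step~2. This needs a Gaussian supremum estimate rather than a pointwise one. Fernique/Borell--TIS suffices; alternatively, the Hölder continuity of the paths (Kolmogorov) gives $\mathbb{E}M^p<\infty$, and Markov's inequality with large $p$ already makes the series summable.
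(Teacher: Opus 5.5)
Your proof is correct, but it takes a different route from the paper. The paper's proof simply cites the law of the iterated logarithm for fractional Brownian motion. Later in the paper this is used in the form $\limsup_{t\to\infty}B^H(t)/(t^H\sqrt{2\log\log t})=1$ a.s. It gives $|B^H_t|=O(t^H\sqrt{\log\log t})=o(t)$ because $H<1$, and the paper then says the exponential bound ``follows immediately.''

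You instead prove $B^H_t/t\to0$ from scratch, in four pieces:
\begin{itemize}
\item Gaussian tails at integer times, which are summable because $2-2H>0$.
\item A uniform bound on the oscillation over $[n,n+1]$, from stationarity of increments together with Fernique or Borell--TIS for $\sup_{[0,1]}|B^H|$. As you note, a finite moment of order $>1$ from Kolmogorov would also do.
\item Borel--Cantelli to combine these.
\item A countable intersection over $\varepsilon=1/k$.
\end{itemize}

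The paper's route is shorter and gives the sharp growth profile $t^H\sqrt{2\log\log t}$, but it rests on a nontrivial external theorem. Yours is self-contained and uses only Gaussianity, stationary increments and $H<1$. It also makes explicit how a single full-measure set works for every $\varepsilon>0$ simultaneously, which the paper leaves implicit.

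Your treatment of the second claim is more careful than the statement itself. With the same $T_\varepsilon(\omega)$ as in the first part, $|B^H_t|\le\varepsilon t$ only gives $e^{\pm cB^H_t}\le e^{c\varepsilon t}$. That is at most $e^{\varepsilon t}$ only when $c\le1$. In general the threshold must depend on $c$, and $T_{\varepsilon/c}(\omega)$, as you wrote, is the right choice.
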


\begin{proof}
Fractional Brownian motion has variance $\mathrm{Var}(B_t^H)=t^{2H}$
and satisfies a law of the iterated logarithm; in particular,
$|B_t^H|=o(t)$ as $t\to\infty$ almost surely, see for instance
\cite[Chapter ~1]{Mishura2008}.
Thus for any $\varepsilon>0$ there exists $T_\varepsilon$ such that
$|B_t^H|\le \varepsilon t$ for $t\ge T_\varepsilon$. The bound on
$e^{\pm c B_t^H}$ follows immediately.
\end{proof}

\begin{theorem}[A.s.\ exponential decay for $p>q$ and strong dissipation]\label{cor:sto-exp-fBm-gb1}
Assume $\beta>0$, $p>q$, and the hypotheses of Theorem~\ref{thm:SF-gb1-fBm}.
Assume further that $\gamma<\lambda_1$ and that $\beta$ is sufficiently large
with respect to $\delta,\gamma$ (so that the deterministic problem is
subcritical) and the noise term is rather weak (take $L_{\sigma}$ in \eqref{gc} small enough). Then almost surely there exist random
$T_\varepsilon(\omega)$ and $C_\varepsilon(\omega)$ such that, for any
$\varepsilon\in(0,(\lambda_1-\gamma)/4)$,
\[
\|u(t,\omega)\|_{L^\infty(D)} \le C_\varepsilon(\omega)\,
 e^{-\big(\frac{\lambda_1-\gamma}{2}-2\varepsilon\big) t},
 \quad\text{for all }t\ge T_\varepsilon(\omega).
\]
\end{theorem}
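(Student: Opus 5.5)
The plan is to combine the pathwise mild formulation \eqref{eq:mild-formulation-b1} with the exponential decay of the killed $\alpha$-stable semigroup, and to control the stochastic convolution through Lemma~\ref{lem:subexp-fBm}. The first ingredient is the $L^\infty$ smoothing and decay of $S_\alpha$: by ultracontractivity and the spectral gap,
\[
\|S_\alpha(t)g\|_{L^\infty(D)}\le M e^{-\lambda_1 t}\|g\|_{L^\infty(D)},\qquad t\ge0,
\]
with $M=M(D,\alpha)$. This follows from the heat kernel bounds for the killed process, or more crudely from $L^\infty\to L^2\to L^\infty$ factorisation for $t\ge1$ and contractivity for $t\le1$. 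Consequently $e^{-\gamma t}S_\alpha(t)$ decays at rate $\lambda_1-\gamma>0$, and the gap $\lambda_1-\gamma$ is the one appearing in the statement. Because of the nonlocal term, I would first obtain a uniform pathwise bound $\sup_{t\ge0}\|u(t)\|_\infty\le K(\omega)$. This comes from the global existence in Theorem~\ref{thm:SF-gb1-fBm}(ii) together with comparison against the spatially constant supersolution of $\dot z=\delta|D|z^q+\gamma z-\beta z^p$, which is bounded since $p>q$. This bound is needed to control $\int_D u^q\,dy\le |D|K^{q-1}\|u\|_\infty$ and $u^p\ge0$; the term $-\beta u^p$ is then simply dropped via the comparison principle, or kept to absorb the source.

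Next I would remove the noise by a Doss--Sussmann-type multiplier. Set $v(t)=e^{-\gamma t}u(t)$ and estimate the mild equation in $L^\infty$:
\[
\|u(t)\|_\infty\le Me^{-(\lambda_1-\gamma)t}\|f\|_\infty+\int_0^t Me^{-(\lambda_1-\gamma)(t-r)}\big(\delta|D|K^{q-1}-\beta\,\mathrm{(damping)}\big)\|u(r)\|_\infty\,dr+\Big\|\int_0^t S_\alpha(t-r)\sigma(u(r))\,dB^H_r\Big\|_\infty .
\]
The largeness of $\beta$ is used to make the nonlocal source effectively small relative to the gap, namely $\delta|D|K^{q-1}M\le(\lambda_1-\gamma)/4$. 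Here $K$ itself depends on $\beta$ through the supersolution, and its equilibrium $\sim(\delta|D|/\beta)^{1/(p-q)}$ shrinks as $\beta\to\infty$. After a finite random time the source is therefore absorbed, leaving a Gronwall inequality with rate $\frac{3}{4}(\lambda_1-\gamma)$ or better.

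The main obstacle is the Young stochastic convolution. I would write it via integration by parts (the fractional calculus of Z\"ahle) as a term bounded by $L_\sigma$ times a Hölder-type norm of $B^H$ on windows, weighted by $e^{-\lambda_1(t-r)}$. Using Lemma~\ref{lem:subexp-fBm} and the fact that the local Hölder constants of $B^H$ on $[n,n+1]$ grow subexponentially almost surely (Garsia--Rodemich--Rumsey plus Borel--Cantelli), the convolution is bounded by $L_\sigma C(\omega)\int_0^t e^{-(\lambda_1-\gamma-\varepsilon)(t-r)}\big(\|u(r)\|_\infty+\text{Hölder seminorm of }u\big)\,dr$. With $L_\sigma$ small, this consumes at most another quarter of the gap. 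That is where the constant $\frac{\lambda_1-\gamma}{2}$ (half the gap lost to source and noise) and the $2\varepsilon$ (one from the subexponential growth of $B^H$, one from its Hölder constants) come from. The Hölder seminorm of $u$ itself must be controlled by the same exponentially weighted norm: I would work in the space with norm $\sup_t e^{\kappa t}(\|u(t)\|_\infty+[u]_{\theta,[t-1,t]})$ with $\kappa=\frac{\lambda_1-\gamma}{2}-2\varepsilon$ and close a linear a priori estimate. That closure is the delicate step, since the randomness of $C(\omega)$ is only handled after $T_\varepsilon(\omega)$, with the finite interval $[0,T_\varepsilon]$ absorbed into $C_\varepsilon(\omega)$. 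A Gronwall argument then gives $\|u(t)\|_\infty\le C_\varepsilon(\omega)e^{-\kappa t}$ for $t\ge T_\varepsilon(\omega)$.
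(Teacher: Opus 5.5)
Your proposal has a genuine gap in the treatment of the stochastic convolution, and the same gap affects your uniform bound $K(\omega)$. The Young estimate on a window near time $r$ carries the \emph{local} H\"older norm $\xi_r(\omega)$ of $B^H$ on that window, not a single constant. By stationarity of increments the $\xi_r$ are identically distributed with unbounded Gaussian-type tails. So almost surely they are unbounded in $r$, and large values recur infinitely often.

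Subexponential growth only gives $\xi_r\le C(\omega)e^{\varepsilon r}$, a factor growing in the absolute time $r$. It does not give the factor $e^{\varepsilon(t-r)}$ hidden in your kernel $e^{-(\lambda_1-\gamma-\varepsilon)(t-r)}$. With the correct factor, the a priori estimate in the norm $N=\sup_t e^{\kappa t}(\cdots)$ does not close. The noise term returns $L_\sigma C(\omega)\,N\sup_t\int_0^t e^{-(\lambda_1-\gamma-\kappa)(t-r)}e^{\varepsilon r}\,dr=\infty$. Equivalently, Gronwall produces a factor $\exp\bigl(L_\sigma\int_0^t\xi_r\,dr\bigr)$. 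No growth bound on $\xi_r$, even $\sqrt{\log r}$, controls this against $e^{-(\lambda_1-\gamma)t}$.

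Independently, a deterministic small $L_\sigma$ gives $L_\sigma C(\omega)\le(\lambda_1-\gamma)/4$ only on an event of probability strictly less than one. Because large values of $\xi_r$ recur, they cannot be pushed into the transient $[0,T_\varepsilon(\omega)]$. At best you would obtain decay with high probability, not almost surely.

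Your first step has the same defect. Once $\sigma\neq0$, a solution of the deterministic ODE $\dot z=\delta|D|z^q+\gamma z-\beta z^p$ is not a supersolution, since $\sigma(u)\,dB^H$ has no sign. For the noisy comparison solution of $dz=(\delta|D|z^q+\gamma z-\beta z^p)\,dt+\sigma(z)\,dB^H$, nothing you invoke gives either $\sup_{t\ge0}z(t)<\infty$ or eventual confinement near the equilibrium. That confinement is what $\delta|D|K^{q-1}M\le(\lambda_1-\gamma)/4$ after a random time requires.

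The missing idea is to control \emph{time averages} of these random coefficients rather than their growth. Work window by window in a norm where $e^{\gamma t}S_\alpha(t)$ contracts exactly at rate $\lambda_1-\gamma$, for instance $L^2$, and upgrade to $L^\infty$ by smoothing at the end. Derive $\|u(n+1)\|\le e^{-(\lambda_1-\gamma)}\Phi_{L_\sigma}(\xi_n)\|u(n)\|$ with $\mathbb{E}\log\Phi_{L_\sigma}(\xi_0)\to0$ as $L_\sigma\to0$. Then apply Birkhoff's ergodic theorem to the stationary ergodic sequence $(\xi_n)$: $\frac1n\sum_{k<n}\log\Phi_{L_\sigma}(\xi_k)\to\mathbb{E}\log\Phi_{L_\sigma}(\xi_0)$ almost surely. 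This yields an almost sure exponential rate close to $\lambda_1-\gamma$, hence the stated one. The nonlocal source must be handled in the same averaged way, through its coefficient $\delta|D|\|u\|_\infty^{q-1}$, not through a time-uniform $K$.

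The paper argues differently. It proves mean-square $L^2$ decay from an energy identity in which the noise contribution is bounded in expectation by $C(H,\sigma)\,\mathbb{E}\|u\|_2^2$. It then uses Chebyshev and Borel--Cantelli along integer times, followed by $L^2\to L^\infty$ smoothing via the mild formula on $[t-1,t]$. It does not supply this ingredient either: its Step~3(c) bounds the stochastic convolution only by $\tilde C_\varepsilon(\omega)e^{\varepsilon t}$, which does not decay.
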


\begin{proof}
Fix $\varepsilon\in\bigl(0,(\lambda_1-\gamma)/4\bigr)$. We split the proof in
three steps.

\medskip\noindent
\textbf{Step 1: Exponential decay of $\mathbb{E}\|u(t)\|_2^2$.}
By the fractional Itô/energy formula given in Theorem~\ref{ito1}
(for $p>q$), the $L^2$--energy of $u$ satisfies
\begin{equation}\label{eq:energy-pq-raw}
\frac{d}{dt}\|u(t)\|_2^2
 = -2\,\mathcal{E}_\alpha(u(t),u(t))
    + 2\delta\Big(\int_D u^q(t,y)\,dy\Big)\|u(t)\|_1
    + 2\gamma\|u(t)\|_2^2
    - 2\beta\|u(t)\|_{p+1}^{p+1}
    + 2\big(\sigma(u(t)),u(t)\big)\dot B_t^H,
\end{equation}
where $\mathcal{E}_\alpha$ is the Dirichlet form associated with
$-\Delta_\alpha,$ defined by \eqref{eq:dirichlet-form}. Taking expectations and using \eqref{eq:Poincare} together with
\[
\mathbb{E}\big[(\sigma(u(t)),u(t))\dot B_t^H\big]
 \le C(H,\sigma)\,\mathbb{E}\|u(t)\|_2^2,
\]
we obtain
\begin{equation}\label{eq:energy-pq-general}
\frac{d}{dt}\mathbb{E}\|u\|_2^2
 \le -2(\lambda_1-\gamma-C(H,\sigma))\,\mathbb{E}\|u\|_2^2
    + 2\delta\,\mathbb{E}\Big[\Big(\!\int_D u^q dx\Big)\|u\|_1\Big]
    - 2\beta\,\mathbb{E}\|u\|_{p+1}^{p+1}.
\end{equation}
On the bounded domain $D$, Hölder’s inequalities give
\[
\int_D |u|^q dx
 = \|u\|_q^q
 \le |D|^{\frac{p+1-q}{p+1}}\|u\|_{p+1}^q,
\qquad
\|u\|_1 \le |D|^{\frac{p}{p+1}}\|u\|_{p+1},
\]
so that
\[
\Big(\int_D u^q dx\Big)\|u\|_1
 \le |D|\,\|u\|_{p+1}^{q+1}.
\]
Substituting this into \eqref{eq:energy-pq-general} yields
\begin{equation}\label{eq:energy-pq-general-2}
\frac{d}{dt}\mathbb{E}\|u\|_2^2
 \le -2(\lambda_1-\gamma-C(H,\sigma))\,\mathbb{E}\|u\|_2^2
    + 2\delta|D|\,\mathbb{E}\|u\|_{p+1}^{q+1}
    - 2\beta\,\mathbb{E}\|u\|_{p+1}^{p+1}.
\end{equation}
We now perform the splitting argument (large/small $\|u\|_{p+1}$) directly.
Set
\[
M(t):=\|u(t)\|_{p+1}.
\]

\medskip\noindent
\emph{Large-$M$ regime.}
Fix $\eta\in(0,1)$ and choose $\beta>0$ (depending on $\delta,|D|,p,q,\eta$)
so large that there exists $R_\beta>0$ with
\begin{equation}\label{eq:Rbeta-def-cor}
\delta|D|\,r^{q+1} \le \eta\beta\,r^{p+1},
\quad\text{for all }r\ge R_\beta.
\end{equation}
Equivalently,
\[
r^{p-q} \ge \frac{\delta|D|}{\eta\beta},
\qquad
R_\beta := \Big(\frac{\delta|D|}{\eta\beta}\Big)^{\!1/(p-q)}.
\]
Since $p>q$, such $R_\beta$ exists for any fixed $\eta\in(0,1)$ when $\beta$
is large enough. Then, whenever $M(t)\ge R_\beta$, we have
\[
\delta|D|\,M(t)^{q+1} - \beta M(t)^{p+1}
 \le (\eta\beta-\beta)M(t)^{p+1}
 = -(1-\eta)\beta\,M(t)^{p+1}.
\]

\medskip\noindent
\emph{Small-$M$ regime.}
If $M(t)<R_\beta$, then
\[
M(t)^{q+1} \le R_\beta^{q-1} M(t)^2.
\]
On the other hand, $L^{p+1}(D)\hookrightarrow L^2(D)$ (since $p+1>2$) implies
\[
M(t) = \|u(t)\|_{p+1}
 \le C_{D,p}\|u(t)\|_2,
\]
so that
\[
\|u(t)\|_{p+1}^{q+1}
 = M(t)^{q+1}
 \le R_\beta^{q-1} C_{D,p}^2\,\|u(t)\|_2^2.
\]

\medskip\noindent
\emph{Combining the two regimes.}
The previous two estimates can be summarised as: there exists a constant
$K_\beta>0$ such that, for all $t\ge0$,
\begin{equation}\label{eq:nonlinear-split-cor}
\delta|D|\,\|u(t)\|_{p+1}^{q+1} - \beta\|u(t)\|_{p+1}^{p+1}
 \le K_\beta\|u(t)\|_2^2,
\end{equation}
with
\[
K_\beta := \delta|D|\,R_\beta^{q-1}C_{D,p}^2
 = \delta|D|\,C_{D,p}^2\Big(\frac{\delta|D|}{\eta\beta}\Big)^{\!\frac{q-1}{p-q}}.
\]
Note that $K_\beta\to0$ as $\beta\to\infty$.

Substituting \eqref{eq:nonlinear-split-cor} into
\eqref{eq:energy-pq-general-2} gives
\[
\frac{d}{dt}\mathbb{E}\|u\|_2^2
 \le -2(\lambda_1-\gamma-C(H,\sigma))\,\mathbb{E}\|u\|_2^2
    + 2K_\beta\,\mathbb{E}\|u\|_2^2
 = -2\big(\lambda_1-\gamma-C(H,\sigma)-K_\beta\big)\mathbb{E}\|u\|_2^2.
\]
Define
\[
\kappa := \lambda_1-\gamma-C(H,\sigma)-K_\beta.
\]
Assume that $C(H,\sigma)<\lambda_1-\gamma$ (which can be ensured by taking
$L_\sigma$ small enough). Since
$K_\beta\to0$ as $\beta\to\infty$, we can choose $\beta$ so large that
\[
K_\beta < \tfrac{1}{2}\big(\lambda_1-\gamma-C(H,\sigma)\big),
\]
and hence
\[
\kappa > \tfrac{1}{2}\big(\lambda_1-\gamma-C(H,\sigma)\big) > 0.
\]
Then
\[
\frac{d}{dt}\mathbb{E}\|u\|_2^2
 \le -2\kappa\,\mathbb{E}\|u\|_2^2,
\]
and Gr\"onwall’s lemma yields a constant $C\ge1$ (depending on the data but
independent of $t$) such that
\begin{equation}\label{eq:mean-L2-decay}
\mathbb{E}\|u(t)\|_2^2 \le C\,e^{-2\kappa t},\quad\text{for all }t\ge0.
\end{equation}
This is the exponential mean-square $L^2$--decay estimate used in the
subsequent steps.

\medskip\noindent
\textbf{Step 2: Almost sure exponential decay in $\mathcal{H}=L^2(D)$.}
Fix any $\eta\in(0,\kappa)$; later we will choose $\eta$ in relation to
$\varepsilon$. For $n\in\mathbb{N}$, Chebyshev’s inequality due to \eqref{eq:mean-L2-decay} gives
\[
\mathbb{P}\Big(\|u(n)\|_2 > e^{-(\kappa-\eta)n}\Big)
 \le e^{2(\kappa-\eta)n}\,\mathbb{E}\|u(n)\|_2^2
 \le C\,e^{2(\kappa-\eta)n}e^{-2\kappa n}
 = C e^{-2\eta n}.
\]
Hence
\[
\sum_{n=1}^\infty
 \mathbb{P}\Big(\|u(n)\|_2 > e^{-(\kappa-\eta)n}\Big)
 \le C \sum_{n=1}^\infty e^{-2\eta n} <\infty.
\]
By the Borel--Cantelli lemma, for almost every $\omega$ there exists a random
$N(\omega)$ such that
\[
\|u(n,\omega)\|_2 \le e^{-(\kappa-\eta)n},\quad\text{for all }n\ge N(\omega).
\]
Since $t\mapsto\|u(t,\omega)\|_2$ is continuous, we can interpolate between
integers and absorb multiplicative constants to obtain: there exist random
$T_\eta^{(0)}(\omega)$ and $C_\eta^{(0)}(\omega)$ such that
\begin{equation}\label{eq:as-L2-decay}
\|u(t,\omega)\|_2 \le C_\eta^{(0)}(\omega)\,e^{-(\kappa-\eta)t},
\quad\text{for all }t\ge T_\eta^{(0)}(\omega).
\end{equation}
Since $\eta\in(0,\kappa)$ is arbitrary, we may (and do) choose it so that
\begin{equation}\label{eq:eta-choice}
\kappa-\eta > \frac{\lambda_1-\gamma}{2}-\varepsilon.
\end{equation}

\medskip\noindent
\textbf{Step 3: From $L^2$–decay to almost sure $L^\infty$–decay.}
We now use the mild formulation
\[
\begin{aligned}
u(t)
&= S_\alpha(t-s)u(s)
 + \int_s^t S_\alpha(t-r)
   \Big[\delta\Big(\int_D u^q(r,y)\,dy\Big)
        +\gamma u(r)-\beta u^p(r)\Big]\,dr\\
&\quad
 + \int_s^t S_\alpha(t-r)\sigma(u(r))\,dB_r^H,
\end{aligned}
\]
with $s=t-1$ (for $t\ge1$) and the smoothing estimate for the semigroup:
for some $\theta>0$ and all $0<t-s\le1$,
\begin{equation}\label{eq:semigroup-smoothing}
\|S_\alpha(t-s)f\|_\infty \le C (t-s)^{-\theta}\|f\|_2,
\end{equation}
see e.g.\ \cite{BogdanGrzywnyRyznar2010,ChenKimSong2010}.
We treat each term separately for $t\ge T_\eta^{(0)}(\omega)+1$.

\smallskip\noindent
\emph{(a) Linear term.}
Using \eqref{eq:semigroup-smoothing} with $s=t-1$ and
\eqref{eq:as-L2-decay},
\[
\|S_\alpha(t-1)u(t-1,\omega)\|_\infty
 \le C\|u(t-1,\omega)\|_2
 \le C C_\eta^{(0)}(\omega)\,e^{-(\kappa-\eta)(t-1)}.
\]

\smallskip\noindent
\emph{(b) Drift term.}
For $r\in[t-1,t]$ and $t$ large, the drift
\[
G(u(r)):=\delta\Big(\int_D u^q(r,y)\,dy\Big)
        +\gamma u(r)-\beta u^p(r)
\]
satisfies a pointwise bound of the form
\[
\|G(u(r,\omega))\|_2
 \le C\|u(r,\omega)\|_2 + C\|u(r,\omega)\|_{p+1}^{p}
 \le C_\eta^{(1)}(\omega)\,e^{-(\kappa-\eta)r},
\]
where we used \eqref{eq:as-L2-decay} and the embedding
$L^2(D)\hookrightarrow L^{p+1}(D)$ on $D$.
Then
\[
\begin{aligned}
\Big\|\int_{t-1}^t S_\alpha(t-r)G(u(r))\,dr\Big\|_\infty
 &\le \int_{t-1}^t \|S_\alpha(t-r)G(u(r))\|_\infty\,dr\\
 &\le C\int_{t-1}^t (t-r)^{-\theta}\|G(u(r))\|_2\,dr\\
 &\le C C_\eta^{(1)}(\omega)\,e^{-(\kappa-\eta)(t-1)}
      \int_0^1 s^{-\theta}\,ds.
\end{aligned}
\]
Since $\theta<1$, the integral $\int_0^1 s^{-\theta}\,ds$ is finite and we
obtain
\[
\Big\|\int_{t-1}^t S_\alpha(t-r)G(u(r))\,dr\Big\|_\infty
 \le C_\eta^{(2)}(\omega)\,e^{-(\kappa-\eta)t}.
\]

\smallskip\noindent
\emph{(c) Stochastic convolution.}
For the Young stochastic integral on $[t-1,t]$ we use that
$B^H$ is $\gamma$--H\"older for any $\gamma<H$ and $\sigma$ satisfies
$|\sigma(r)|\le C_\sigma(1+|r|),$ cf. \eqref{gc}.
On the short interval $[t-1,t]$ the Young estimate gives, for some
$\delta\in(0,1)$,
\[
\Big\|\int_{t-1}^t S_\alpha(t-r)\sigma(u(r))\,dB_r^H\Big\|_\infty
 \le C(\omega)\,\sup_{r\in[t-1,t]}\|\sigma(u(r,\omega))\|_2,
\]
where $C(\omega)$ depends on the H\"older norm of $B^H$ on $[t-1,t]$ but
not on $t$ (by stationarity of increments). Using
$|\sigma(r)|\le C_\sigma(1+|r|)$ and \eqref{eq:as-L2-decay},
\[
\sup_{r\in[t-1,t]}\|\sigma(u(r,\omega))\|_2
 \le C_\sigma\Big(1+\sup_{r\in[t-1,t]}\|u(r,\omega)\|_2\Big)
 \le C_\sigma\Big(1+C_\eta^{(0)}(\omega)e^{-(\kappa-\eta)(t-1)}\Big).
\]
Thus we obtain a bound of the form
\[
\Big\|\int_{t-1}^t S_\alpha(t-r)\sigma(u(r))\,dB_r^H\Big\|_\infty
 \le C_\eta^{(3)}(\omega)\big(1+e^{-(\kappa-\eta)t}\big).
\]
To take into account the growth in $t$ coming from the path $\omega$,
we use Lemma~\ref{lem:subexp-fBm}, which states that for any
$\varepsilon>0$ there is $T_\varepsilon^{(1)}(\omega)$ such that
\[
|B_t^H(\omega)|\le \varepsilon t,\quad\text{for all }t\ge T_\varepsilon^{(1)}(\omega).
\]
Combining this with the continuity and stationarity of increments of $B^H$
shows that $C_\eta^{(3)}(\omega)$ may be chosen so that
\[
C_\eta^{(3)}(\omega)\le \tilde C_\varepsilon(\omega)\,e^{\varepsilon t},
\]
for $t$ large enough. Hence for $t$ large,
\[
\Big\|\int_{t-1}^t S_\alpha(t-r)\sigma(u(r))\,dB_r^H\Big\|_\infty
 \le \tilde C_\varepsilon(\omega)\,e^{\varepsilon t}.
\]

\smallskip\noindent
\emph{(d) Collecting the estimates.}
Putting together (a), (b), and (c), we obtain for all
$t\ge T_\eta(\omega):=T_\eta^{(0)}(\omega)+T_\varepsilon^{(1)}(\omega)+1$,
\[
\|u(t,\omega)\|_\infty
\le C_\eta(\omega)\Big(e^{-(\kappa-\eta)t}+e^{\varepsilon t}\Big),
\]
for some random constant $C_\eta(\omega)$.
\\\noindent
Recall that by \eqref{eq:eta-choice},
\[
\kappa-\eta > \frac{\lambda_1-\gamma}{2}-\varepsilon,
\]
and $\varepsilon<(\lambda_1-\gamma)/4$ by assumption. In particular,
\[
(\kappa-\eta)-\varepsilon > \frac{\lambda_1-\gamma}{2}-2\varepsilon >0.
\]
Thus the decaying term $e^{-(\kappa-\eta)t}$ dominates the subexponential
factor $e^{\varepsilon t}$:
\[
e^{\varepsilon t}
 = e^{-(\kappa-\eta)t}\,e^{(\kappa-\eta+\varepsilon)t}
 \le e^{-(\kappa-\eta)t}\,e^{(\kappa-\eta-\varepsilon+2\varepsilon)t}
 \le e^{-(\kappa-\eta)t}\,e^{-(\frac{\lambda_1-\gamma}{2}-2\varepsilon)t},
\]
for $t$ large, after adjusting constants. Absorbing all multiplicative
constants into a new random constant $C_\varepsilon(\omega)$, we finally get
\[
\|u(t,\omega)\|_{L^\infty(D)}
\le C_\varepsilon(\omega)\,
e^{-\big(\frac{\lambda_1-\gamma}{2}-2\varepsilon\big)t},
\quad\text{for all }t\ge T_\varepsilon(\omega),
\]
which is the desired almost sure exponential decay.
\end{proof}

\begin{remark}[Biological interpretation: tumour extinction under strong therapy and weak environmental noise]
\label{rem:tumour-extinction}
Proposition~\ref{cor:sto-exp-fBm-gb1}  has a clear extinction meaning:
under \emph{strong dissipation} ($p>q$ and $\beta$ sufficiently large relative to
$\delta,\gamma$) and \emph{weak multiplicative environmental variability}
($L_\sigma$ small), the tumour density decays to zero \emph{almost surely} and
\emph{exponentially fast} in $L^\infty(D)$ after a (random) transient time
$T_\varepsilon(\omega)$. In biological terms, this is a rigorous “tumour
eradication/extinction” scenario: for almost every realisation of environmental
fluctuations, the tumour burden eventually becomes arbitrarily small and keeps
shrinking at an exponential rate.

Moreover, the decay rate is controlled by the \emph{spectral gap}
$\lambda_1-\gamma$: the principal eigenvalue $\lambda_1$ of $-\Delta_\alpha$
quantifies how strongly diffusion plus boundary losses suppress the population,
while $\gamma$ quantifies net proliferation. Thus, larger $\lambda_1$ (e.g.\
stronger effective diffusion/clearance or more confining domains) accelerates
extinction, whereas larger $\gamma$ slows it down. The appearance of random
constants $C_\varepsilon(\omega)$ and $T_\varepsilon(\omega)$ is consistent with
heterogeneity: the same “therapy-dominant” regime produces extinction, but the
time-to-clearance and transient amplitude depend on the realised environmental
path. Connections between stochastic forcing and extinction/mean extinction times
are classical in population dynamics; see, e.g., \cite{Hening2018,Tesfay2021}.
\end{remark}
\begin{remark}[Deterministic benchmark, noise impact, and biological meaning]\label{rem:noise-impact-exp-decay-bio}
Theorem~\ref{cor:sto-exp-fBm-gb1} may be interpreted as a \emph{robust} version of
the deterministic exponential stability obtained when the stochastic forcing is
absent.

\smallskip
\noindent\emph{Deterministic case.}
If $\sigma\equiv0$, the stochastic term in \eqref{eq:energy-pq-raw} vanishes and
\eqref{eq:energy-pq-general-2} holds with $C(H,\sigma)=0$. Under the same
subcriticality assumptions ($p>q$, $\gamma<\lambda_1$, and $\beta$ large so that
$K_\beta$ is small), one gets
\[
\frac{d}{dt}\|u(t)\|_2^2
\le -2(\lambda_1-\gamma-K_\beta)\,\|u(t)\|_2^2,
\qquad t\ge0,
\]
hence
\[
\|u(t)\|_2 \le \|u(0)\|_2\,e^{-(\lambda_1-\gamma-K_\beta)t},
\]
and, by semigroup smoothing, an analogous deterministic exponential decay in
$L^\infty(D)$ after a short transient. In the tumour interpretation, this
corresponds to \emph{extinction/loss of viability} of the tumour cell density:
diffusion together with sufficiently strong local dissipation ($\beta u^p$)
overcomes the nonlocal proliferation drive.

\smallskip
\noindent\emph{Effect of multiplicative fBm noise.}
When $\sigma\neq0$, decay is weakened in two ways:
\begin{itemize}
\item[(i)] \emph{Reduced effective damping.}
In \eqref{eq:energy-pq-general-2}, the stochastic term contributes a positive
quantity $C(H,\sigma)$ $\mathbb{E}\|u(t)\|_2^2$, so the deterministic gap
$\lambda_1-\gamma$ is effectively replaced by $\lambda_1-\gamma-C(H,\sigma)$.
This explains the requirement that the noise be ``weak'' (small $L_\sigma$) so
that $\lambda_1-\gamma-C(H,\sigma)>0$.
\item[(ii)] \emph{Random transients and rate loss.}
Upgrading mean decay to an almost sure $L^\infty$ estimate produces a random
prefactor $C_\varepsilon(\omega)$ and a random entrance time
$T_\varepsilon(\omega)$ and forces a small loss in the exponent, yielding the
rate $\frac{\lambda_1-\gamma}{2}-2\varepsilon$ in the theorem.
\end{itemize}

\smallskip
\noindent\emph{Biological interpretation (why stronger noise may prevent extinction).}
Here the multiplicative term $\sigma(u)\,dB^H(t)$ represents temporally
correlated fluctuations of the tumour microenvironment (e.g.\ variability in
nutrient/oxygen supply, immune pressure, vascular remodelling, or treatment
efficacy) that act proportionally to the current tumour burden. Larger noise
intensity amplifies intermittent \emph{favourable episodes} during which the
effective net growth becomes less negative (or temporarily positive), allowing
the population to recover from low densities and thereby counteracting the
deterministic dissipation. Mathematically, this is reflected by the appearance
and growth of $C(H,\sigma)$: if $C(H,\sigma)$ becomes comparable to
$\lambda_1-\gamma$, the net damping can be neutralised, and the exponential
decay mechanism (tumour extinction) can no longer be guaranteed. In particular,
long-memory fluctuations ($H>\tfrac12$) may sustain such favourable phases for
longer intervals, which further promotes persistence rather than extinction.
\end{remark}

\section{Linear Fractional Multiplicative Noise}\label{sec5}
Henceforth, we focus on the case of the linear fractional multiplicative noise, that is we consider the problem:
\begin{equation}\label{lb1}
	\left\{
	\begin{aligned}
		du(t,x)-\Delta_{\alpha}u(t,x)dt&=\left[ \delta \int_{D}u^{q}(t,y)dy+\gamma u(t,x)- \beta u^{p}(t,x) \right]dt+\sigma u(t,x)dB^{H}(t), \\
		u(0,x)&=f(x), \ \ x\in D,\\
		u(t,x)&=0, \ \ t>0, \ \ x\in \mathbb{R}^{d}\backslash D, \\
	\end{aligned}
	\right.
\end{equation}
where now $\sigma$ is a positive constant. In that case a more delicate analysis could be delivered by making use of a Doss--Sussmann type transformation.
	\subsection*{A Random PDE}
	Indeed,   we consider of the  random (Doss--Sussmann type) transformation 
	\begin{align} \label{tr1}
	v(t,x) = \exp\{-\sigma B^{H}(t)\}u(t,x),\ t \geq 0,\ x \in D,
	\end{align}
	so that  the equation $\eqref{lb1}$ is transformed into the following random PDE:
	\begin{equation}\label{s1}
	\left\{
	\begin{aligned}
	 v_{t}-\Delta_{\alpha}v(t,x)&=\gamma v(t,x)+\delta e^{(q-1)\sigma B^{H}(t)} \int_{D} v^{q}(t,y) dy-\beta e^{(p-1) \sigma B^{H}(t)} v^{p}(t,x) ,\\
	v(0,x)&=f(x), \ \  x \in D,\\
	v(t,x)&=0, \ \ t>0, \ \ x \in \mathbb{R}^{d}\backslash D.	  
	\end{aligned}
	\right.
	\end{equation}
The following result provides an equivalence between the weak solution of the stochastic PDE \eqref{b1} and that of the random PDE \eqref{s1}. 
	\begin{theorem} \label{thm2.1}
	Let $u$ be the weak solution of $\eqref{lb1}$. Then the function $v$ defined by \eqref{tr1}
	is a weak solution of the random PDE \eqref{s1} and viceversa. 
\end{theorem}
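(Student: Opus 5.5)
We will work with the weak formulation \eqref{eq:weak-formulation-b1} specialised to $\sigma(u)=\sigma u$, and apply the fractional chain rule (Theorem~\ref{ito1}, in its product form for Young integrals) to the product of the scalar processes $t\mapsto u(t,\varphi)$ and $t\mapsto e^{-\sigma B^H(t)}$. For a fixed test function $\varphi\in V$, set $X(t):=u(t,\varphi)$. Then \eqref{eq:weak-formulation-b1} shows that $X$ is the sum of an absolutely continuous part and a Young integral $\int_0^t\sigma X(s)\,dB^H(s)$. In particular, $X$ is H\"older continuous of order $\theta>1-H$ on compacts.

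Since $H>1/2$, Theorem~\ref{ito1} with $F(x)=e^{-\sigma x}$ gives $dY=-\sigma Y\,dB^H$ for $Y(t):=e^{-\sigma B^H(t)}$, with no second-order correction. The Young integration-by-parts formula (valid because both $X$ and $Y$ have H\"older exponents summing to more than $1$) then yields
\[
d(XY)=Y\,dX+X\,dY .
\]
Substituting $dX$ and $dY$, the two $dB^H$ terms $\sigma XY\,dB^H$ and $-\sigma XY\,dB^H$ cancel exactly. What remains is an absolutely continuous identity for $v(t,\varphi)=Y(t)X(t)$. Rewriting each drift term through $u=e^{\sigma B^H}v$ gives
\[
Y\delta\int_D u^q=\delta e^{(q-1)\sigma B^H}\int_D v^q, \qquad Y\beta\langle u^p,\varphi\rangle=\beta e^{(p-1)\sigma B^H}\langle v^p,\varphi\rangle,
\]
while the linear terms $\langle u,(A_\alpha+\gamma)\varphi\rangle$ become the same expressions in $v$. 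This is precisely the weak formulation of \eqref{s1}, tested against $\varphi$. The regularity requirements of Definition~\ref{def:weak-solution} transfer because $e^{\pm\sigma B^H}$ is continuous, hence bounded on compact subintervals of $[0,\tau)$. Therefore $v\in C([0,\tau);\mathcal H)$, $v\in L^p_{\rm loc}L^p\cap L^q_{\rm loc}L^q$, and adaptedness is preserved.

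For the converse, we start from a weak solution $v$ of \eqref{s1}. For each $\varphi$, the map $t\mapsto v(t,\varphi)$ is absolutely continuous, hence of bounded variation. We set $u=e^{\sigma B^H}v$ and apply the same product rule with $Z(t):=e^{\sigma B^H(t)}$, for which $dZ=\sigma Z\,dB^H$. This produces the drift terms of \eqref{eq:weak-formulation-b1} together with the stochastic term $\sigma u(t,\varphi)\,dB^H$, and the H\"older regularity of $\langle\sigma u,\varphi\rangle$ follows from that of $B^H$.

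The main obstacle is justifying the Young product rule rigorously. One needs $X$ to be H\"older continuous of order greater than $1-H$; alternatively, one can work with the $\mathcal B^{\nu,2}$/Z\"ahle generalised Stieltjes framework and the corresponding chain rule for $F(B^H,X)$. We would handle this by invoking the change-of-variables formula for generalised Stieltjes integrals (as in \cite{Zahle}, \cite{NualartRascanu2002}) applied to $F(x,y)=e^{-\sigma x}y$. A secondary check is that the set of full measure on which the identity holds can be chosen independently of $\varphi$. This follows by taking a countable dense subset of $V$ and using continuity in $\varphi$.
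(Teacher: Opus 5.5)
Your proposal is correct and follows the paper's proof. Both arguments apply Theorem~\ref{ito1} to get $e^{-\sigma B^H(t)}=1-\sigma\int_0^t e^{-\sigma B^H(s)}\,dB^H(s)$ with no correction term, use the pathwise integration-by-parts formula for $e^{-\sigma B^H(t)}u(t,\varphi)$ so that the two $dB^H$ contributions cancel, and rewrite $e^{-\sigma B^H}u^q=e^{(q-1)\sigma B^H}v^q$ and $e^{-\sigma B^H}u^p=e^{(p-1)\sigma B^H}v^p$. For the converse you rerun the product rule with $e^{\sigma B^H}$, which is just a more explicit version of the paper's one-line appeal to the transformation being a homeomorphism; note, however, that the H\"older regularity of $t\mapsto u(t,\varphi)$ required in Definition~\ref{def:weak-solution} needs $t\mapsto v(t,\varphi)$ itself to be H\"older of order $>1-H$ (for instance from local boundedness of the solution), not merely absolutely continuous.
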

	\begin{proof}
	By using Ito's formula (see Theorem \ref{ito1}), we have 
	\begin{eqnarray}
	e^{-\sigma B^{H}(t)}=1-\sigma \int_{0}^{t} e^{-\sigma B^{H}(s)} dB^{H}(s). \nonumber 
	\end{eqnarray}
	For any $\varphi \in C^\infty_0(D)$,  the weak solution of the equation \eqref{b1}, as defined by Definition \ref{def:weak-solution}, satisfies
	\begin{align}\label{c1} 
	u\left(t,\varphi\right)&=u\left(0,\varphi\right)+\int_{0}^{t}u\left(s,\Delta_{\alpha}\varphi\right)ds+\delta\int_{0}^{t}\int_{D}u^{q}(s,y)dy \int_{D} \varphi(z) dzds+\gamma \int_{0}^{t}u\left(s,\varphi\right)ds\nonumber\\
	&\quad- \beta \int_{0}^{t} u^{p}\left(s,\varphi\right)ds+\sigma \int_{0}^{t}u\left(s,\varphi\right)dB^{H}(s).
	\end{align}
    By applying the integration by parts formula \cite[p.184]{Mishura2008}, we obtain
	\begin{align*}
	v(t,\varphi):&=\int_{\mathbb{R}^{d}}v(t,x)\varphi(x)dx =v(0,\varphi)+\int_{0}^{t} e^{-\sigma B^{H}(t)}du(s,\varphi)+\int_{0}^{t} u(s,\varphi) d \left(e^{-\sigma B^{H}(s)} \right).\nonumber 
	\end{align*} 
	Therefore,
	\begin{align}\label{a6}
	v(t,\varphi) &= v(0,\varphi)+\int_{0}^{t}v\left(s,\Delta_{\alpha}\varphi\right)ds+\delta \int_{0}^{t} e^{\sigma (q-1)B^{H}(s)}\int_{D} v^{q}(s,\varphi)dy ds \nonumber\\
	&\quad +\gamma\int_{0}^{t}v\left(s,\varphi\right)ds-\beta \int_{0}^{t} e^{\sigma (p-1)B^{H}(s)}v^{p}(s,\varphi) ds \nonumber\\
	&= v(0,\varphi)+\int_{0}^{t}[v\left(s,\Delta_{\alpha}\varphi\right)+\gamma v\left(s,\varphi\right)]ds+\delta \int_{0}^{t} e^{\sigma (q-1)B^{H}(s)}\int_{D} v^{q}(s,\varphi)dy ds \nonumber\\
	&\quad-\beta \int_{0}^{t} e^{\sigma (p-1)B^{H}(s)}v^{p}(s,\varphi) ds.  
	\end{align}
    The preceding equality \eqref{a6} means that $v$ is a weak solution of $\eqref{s1}.$ The converse part follows from the fact that the change of variable is given by a homeomorphism  which converts one random dynamical system into an another equivalent one.   
	\end{proof}
    \begin{remark}
Let $\tau$ be the blow--up time of the equation \eqref{s1} with the initial value $f$. Due to Theorem \ref{thm2.1} and $\mathbb{P}$-a.s. continuity of $B^H(\cdot)$, $\tau$ is also the blow--up time for the equation \eqref{b1} as well \cite[Corollary 1]{dozfrac2013}. 
    \end{remark}
	Let us denote 
	\begin{align}\label{st71}
		B^{H}_{\ast}(t;\omega) = \sup_{0 \leq s \leq t} |B^{H}_{s;\omega}|,\ \mbox{for each}\ t\geq 0. 
	\end{align}
	 Therefore via Lemma \ref{dung2} for $Z=B^H$ we have for all $\omega \in \Omega,$
     \begin{equation}\label{fb1}
     \mathbb{P}(B_{\ast}^{H}(t;\omega)< \infty)=1-\lim_{x \rightarrow \infty} \mathbb{P}(B_{\ast}^{H}(t;\omega)> x)=1,\ t>0.
     \end{equation}
	Then, by Theorem \ref{thm2.1} and \eqref{fb1},  the global existence and finite--time blow--up of $u$ is guaranteed by considering the properties of $v.$ So in this paper, we mainly consider the random partial differential equation \eqref{s1} for further investigation.
	
Problem~\eqref{s1} is understood in the pathwise (trajectory-wise) sense: for
each fixed $\omega$, the driving path is deterministic and the equation becomes
a semilinear parabolic problem generated by the Dirichlet fractional Laplacian.
Since the operator $-\Delta_\alpha$ with zero exterior condition generates an analytic
contraction semigroup on $L^q(D)$,  $1\le q<\infty$, cf. see Section \ref{sec3}, and since the reaction term
is locally Lipschitz on $L^\infty(D)$, standard semilinear parabolic theory
yields a unique maximal mild solution
\[
v(\cdot,\omega)\in C \left([0,\tau_{\max}(\omega));\,L^q(D)\right)\cap
C\left((0,\tau_{\max}(\omega));\,L^\infty(D)\right),
\]
see, e.g., the abstract semigroup frameworks in \cite{Pazy1983}
together with the Dirichlet fractional setting and comparison/max\-imum principles
in \cite{BucurValdinoci2016}. Moreover, the usual continuation criterion holds:
the solution can be extended beyond any $\tau<\tau_{\max}(\omega)$ as long as
$\|v(t,\omega)\|_{L^\infty(D)}$ remains finite, and if $\tau_b:=\tau_{\max}(\omega)<\infty$
then necessarily
\[
\limsup_{t\uparrow \tau_{b}(\omega)}\|v(t,\omega)\|_{L^\infty(D)}=+\infty,
\]
cf.\ the blow--up alternative for semilinear parabolic equations and the smoothing
estimates for the fractional heat semigroup \cite{BogdanGrzywnyRyznar2010,ChenKimSong2010}.
	
Furthermore, for any bounded measurable initial data $f \geq 0,$ the unique local mild solution $v$ of the random PDE \eqref{s1} satisfies the integral equation 
	\begin{align}\label{e3}
		v(t,x)=e^{\gamma t}S_\alpha(t)f(x)+\delta\int_{0}^{t} e^{\gamma (t-r)} S_\alpha(t-r)\Bigg[e^{(q-1)\sigma B^{H}(r)}\int_{D}v^{q}(r,y)dy-\beta e^{( p-1) \sigma B^{H}(r)}v^{p}(r,x) \Bigg] dr,
	\end{align}
	for each $0 \leq t<\tau$ and $x\in D.$ 
\begin{remark}\label{mv}
Since $\Delta_\alpha$ is the 
generator of the symmetric $\alpha$--stable L\'evy process and the reaction term
in \eqref{s1} is continuous in $(t,x)$ and locally Lipschitz in $v$, the (pathwise)
mild solution is also a continuous viscosity solution of \eqref{s1} on
$(0,\tau_{\max}(\omega))\times D$ with the exterior Dirichlet condition
$v\equiv0$ on $\mathbb{R}^d\setminus D$, in the standard viscosity sense for
nonlocal integro-differential equations; see, e.g.,
\cite{BarlesImbert2008,CaffarelliSilvestre2009}, cf. also Definition \ref{def:visc-supersolution} below.

\end{remark}
In what follows, we collect maximum principles and comparison results for
linear nonlocal problems of the form
\begin{equation}\label{eq:visc-fractional}
w_t(t,x)-d\,\Delta_{\alpha} w(t,x)
\;=\;
c_1(t,x)\,w(t,x)
+\int_D c_2(t,y)\,w(t,y)\,dy,
\qquad (t,x)\in D_T:=(0,T]\times D,
\end{equation}
where $d>0$ is a constant, and
\begin{equation}\label{sk25}
c_1\in L^\infty(D_T), \qquad
c_2\in L^\infty((0,T)\times D), \qquad
c_2(t,y)\ge 0 \ \ \text{a.e.\ in }(0,T)\times D,
\end{equation}
that will be used throughout the manuscript in the analysis
of~\eqref{s1}.

We first introduce the concept of viscocity solutions (superlsolutions and subsolutions) of problem \eqref{eq:visc-fractional}.
\begin{definition}\label{def:visc-supersolution}
We say that a bounded continuous function $w : [0,T]\times\mathbb{R}^d \to \mathbb{R}$ is a
\it{viscosity solution (supersolution/subsolution)} of \eqref{eq:visc-fractional} in $D_T$ if, for every
$(t_0,x_0)\in D_T$ and every test function $\varphi\in C^{1,2}((0,T)\times\mathbb{R}^d)$ such that
$w-\varphi$ attains a local minimum at $(t_0,x_0)$ with $(w-\varphi)(t_0,x_0)=0$, the following
equality (inequality) holds:
\[
\varphi_t(t_0,x_0)
- d\Delta_{\alpha} \varphi(t_0,x_0)
\;=(\ge \le)\;
c_1(t_0,x_0)\,w(t_0,x_0)
+\int_D c_2(t_0,y)\,w(t_0,y)\,dy,
\]
where $\Delta_{\alpha} \varphi$ is understood in the classical (pointwise) sense via the  principal value integral given by \eqref{eq:frac-lap-def}.

\end{definition}
\begin{remark}
This definition is consistent with the general theory of viscosity solutions for
integro-differential operators of Lévy type as in
\cite{AlvarezTourin1996,Arisawa2006,BarlesImbert2008,CaffarelliSilvestre2009},
we are in the linear case with spatially translation-invariant nonlocal operator
$\Delta_{\alpha}$ and bounded coefficients in the lower-order terms.
\end{remark}

	 
	\begin{lemma}[Maximum principle for viscosity supersolutions ]\label{l1}
Assume  that $w \in C([0,T]\times\mathbb{R}^d)$ is bounded and is a \emph{viscosity
supersolution} of  \eqref{eq:visc-fractional} in $D_T.$ Assume further that the exterior and initial conditions satisfy
\[
w(t,x)\ge 0, \quad\text{for all } (t,x)\in (0,T]\times(\mathbb{R}^d\setminus D),
\]
and
\[
w(0,x)\ge 0, \quad\text{for all } x\in D.
\]

Then:
\begin{enumerate}
\item $w(t,x)\ge 0$ for all $(t,x)\in \overline{D}_T=[0,T]\times\overline D$.
\item If, in addition, $w(0,\cdot)\not\equiv 0$ in $D$, then
\[
w(t,x)>0, \quad\text{for all } (t,x)\in D_T.
\]
\end{enumerate}
\end{lemma}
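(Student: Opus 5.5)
The proof will follow the classical strategy for linear nonlocal problems: first remove the sign of the zeroth-order coefficient by an exponential change of variables, then argue by contradiction at a first negative minimum. The viscosity test-function definition replaces pointwise differentiation at that minimum.

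Let $M:=\|c_1\|_{L^\infty(D_T)}$ and set
\[
\tilde w(t,x):=e^{-\lambda t}w(t,x),\qquad \lambda>M.
\]
Then $\tilde w$ is a bounded continuous viscosity supersolution of
\[
\tilde w_t-d\,\Delta_\alpha\tilde w\ \ge\ (c_1-\lambda)\tilde w+\int_D c_2\tilde w\,dy .
\]
It also satisfies the same exterior and initial sign conditions, and $c_1-\lambda<0$.

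\textbf{Part (1).} To obtain a strict inequality, I will also perturb by $\varepsilon e^{t}$: consider $z:=\tilde w+\varepsilon(1+t)$ with $\varepsilon>0$. Suppose $z$ becomes negative somewhere in $\overline D_T$. By continuity and compactness, there is a first time $t_0>0$ and a point $x_0$ at which $z$ attains its minimum over $[0,t_0]\times\mathbb R^d$, and this minimum is $0$. Here $x_0\in D$, because $z>0$ on the exterior and at $t=0$. (Boundedness of $w$ handles the minimum over $\mathbb R^d$, since $z\ge\varepsilon$ outside $D$.)

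Take as test function $\varphi$ the constant $z(t_0,x_0)=0$ minus the $\varepsilon$-correction. More precisely, $\varphi(t,x)=\min z$ is a constant touching $z$ from below. Translating back, $\tilde w-\psi$ has a local minimum at $(t_0,x_0)$ with $\psi=-\varepsilon(1+t)$. The only subtlety is that this is a one-sided-in-time minimum; this is standard for parabolic viscosity theory, and I will handle it by penalising with $-\eta(t-t_0)$ or by using the parabolic definition on $(0,t_0]$.

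Now evaluate. Since $\psi$ is spatially constant, $\Delta_\alpha\psi=0$, so the supersolution inequality gives
\[
-\varepsilon\ \ge\ (c_1-\lambda)\tilde w(t_0,x_0)+\int_D c_2\tilde w(t_0,y)\,dy .
\]
Here $\tilde w(t_0,x_0)=-\varepsilon(1+t_0)$, and $\tilde w(t_0,y)\ge-\varepsilon(1+t_0)$. Therefore the right-hand side is at least
\[
\varepsilon(1+t_0)\bigl(\lambda-M-\|c_2\|_{L^\infty}|D|\bigr).
\]
Choosing $\lambda>M+\|c_2\|_\infty|D|$ makes this strictly positive, which is a contradiction. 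Letting $\varepsilon\to0$ gives $w\ge0$.

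\textbf{Main obstacle.} The step I expect to be most delicate is the use of a test function that touches only at the minimum point. The nonlocal operator is evaluated on $\varphi$, not on $w$, so the extra nonlocal positivity of $\Delta_\alpha$ at a global minimum is lost. Because the test function here is spatially constant, $\Delta_\alpha\varphi=0$ and the argument closes, but I need the minimum to be global in $\mathbb R^d$. The exterior condition and boundedness provide this.

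\textbf{Part (2).} For strict positivity, having $w\ge0$, drop the nonnegative nonlocal term $\int c_2w$. Then $w$ is a nonnegative viscosity supersolution of the local-coefficient equation $w_t-d\Delta_\alpha w\ge c_1w$. Now suppose $w(t_0,x_0)=0$ for some $(t_0,x_0)\in D_T$. Test with a smooth $\varphi\le w$ that vanishes at $(t_0,x_0)$ and is positive on a set where $w(t_0,\cdot)>0$. Such a set exists, since $w(0,\cdot)\not\equiv0$ and positivity propagates by continuity; if necessary, first compare with the subsolution $e^{-Mt}S_\alpha(dt)w(0)$, which is strictly positive because the killed stable kernel is positive.

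With this $\varphi$ the principal-value integral satisfies $\Delta_\alpha\varphi(t_0,x_0)<0$ strictly, while $\varphi_t(t_0,x_0)\le0$. This contradicts the supersolution inequality, which has right-hand side $c_1\cdot0=0$. Concretely, I expect to route the strong part through the comparison $w\ge e^{-Mt}S_\alpha(dt)w(0)>0$, which in turn follows from part (1) applied to the difference.
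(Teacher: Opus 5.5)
Your argument is correct. Part~(1) keeps the paper's skeleton but changes the test function, and Part~(2) takes a genuinely different route.

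In Part~(1) the shared structure is the weight $e^{-\lambda t}$ with $\lambda>\|c_1\|_\infty+\|c_2\|_\infty|D|$, a negative minimum pushed into $D_T$ by the side conditions, and the bound $\int_D c_2\tilde w\,dy\ge \tilde w(t_0,x_0)\int_D c_2\,dy$. The difference is the test function. The paper touches at the minimum with $m_v+\varepsilon(|x-x_1|^2+(t-t_1)^2)$ and tries to extract a sign from $\Delta_\alpha\varphi_\varepsilon$. That function lies above $v$ wherever $v$ is flat near its minimum, so $v-\varphi_\varepsilon$ need not have a local minimum there. Its fractional Laplacian also diverges for $\alpha<2$ because of the quadratic growth. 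Your spatially constant test function avoids both problems:
\begin{itemize}
\item it touches from below precisely because the minimum is global in $x$;
\item it annihilates the nonlocal operator whatever sign convention is used for $\Delta_\alpha$;
\item the contradiction then comes purely from the zeroth-order and integral terms.
\end{itemize}
So your choice is what actually makes this step rigorous.

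In Part~(2) the paper invokes a strong maximum principle for nonlocal parabolic viscosity supersolutions from the literature (the dichotomy $w>0$ or $w\equiv0$), without checking its hypotheses for this equation. You instead apply Part~(1) to $w-V$ with $V=e^{-\|c_1\|_\infty t}S_\alpha(dt)g$, and use strict positivity of the Dirichlet heat kernel of the killed stable process. This is self-contained and gives the quantitative bound $w\ge V$. Its price is interior $C^{1,2}$ regularity of $S_\alpha(t)g$ for $t>0$, so that $w-V$ is again a viscosity supersolution of an equation of the same form. For that you use $\int_D c_2 w\,dy\ge\int_D c_2(w-V)\,dy$, valid since $V\ge0$.

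Three small repairs are needed.
\begin{itemize}
\item The linear penalisation $-\eta(t-t_0)$ does not handle the minimum that is only one-sided in time: $z$ is merely continuous in $t$ and may turn negative immediately after $t_0$. Use the singular penalisation instead, minimising $z+\eta/(t_0-t)$ over $[0,t_0)\times\mathbb{R}^d$. For small $\eta$ the minimiser lies in $(0,t_0)\times D$, and the minimum is two-sided in time and global in space. The test function $\mathrm{const}-\varepsilon(1+t)-\eta/(t_0-t)$ is still spatially constant, so your computation goes through with an even more negative time derivative.
\item In Part~(2), compare with $S_\alpha(dt)g$ for some $g\in C_c^\infty(D)$ with $0\le g\le w(0,\cdot)$ and $g\not\equiv0$, rather than with $S_\alpha(dt)w(0,\cdot)$. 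The latter need not be continuous at $t=0$ on $\partial D$, and Part~(1) requires $w-V$ to be continuous there.
\item With the comparison in hand, drop your direct test-function paragraph. Its sign for $\Delta_\alpha\varphi$ at the zero minimum is the reverse of what the contradiction needs.
\end{itemize}
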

For the proof of this Lemma see the Appendix.

\noindent Let $N>0$ be a constant and define the following stopping time:
\begin{equation}\label{ik19}\tau_{N}(\omega)=\inf \left\lbrace t>0:\ |B^{H}(t;\omega)|\geq N\right\rbrace.
\end{equation}
	 Clearly, $$\left\lbrace \omega \in \Omega :\ \tau_{N}(\omega) \leq t \right\rbrace=\left\lbrace \omega \in \Omega:\ B^{H}_{\ast}(t;\omega) \geq N \right\rbrace,
$$
for $B^{H}_{\ast}$ defined by \eqref{st71}.

	 Let $z(t,x)$ be the solution of 
	 \begin{equation*}
	 \left\{
	 \begin{aligned}
	 \frac{\partial z(t,x)}{\partial t}&=(\Delta_{\alpha}+\gamma)z(t,x)+\delta e^{\left( q-1\right) \sigma B^{H}(t) }\int_{D}|z(t,y)|^{q-1}z(t,y)dy\\
	 &\qquad-\beta e^{\left( p-1\right)\sigma B^{H}(t) }|z(t,x)|^{p-1}z(t,x),\ (x,t) \in D \times (0,T \wedge \tau_{N}],\\
	 z(t,x)&=0, \ \ (x,t) \in \mathbb{R}^{d}\backslash D \times (0,T \wedge \tau_{N}], \\
	 z(0,x)&=f(x), \ x \in D.  
	 \end{aligned}
	 \right.
	 \end{equation*} 
	 Here $f(x) \geq 0,\ c_{1}(t,x)=\gamma-\beta e^{(p-1)\sigma B^{H}(t)}|z(t,x)|^{p-1},\ c_{2}(t,x)= e^{\left( q-1\right) \sigma B^{H}(t)}|z(t,y)|^{q-1}\geq 0$ and $c_{1}(t,x)$ and $c_{2}(t,x)$ are bounded in $D \times (0,T \wedge \tau_{N}].$ By using Lemma \ref{l1}, we have $z(t,x) \geq 0$ and hence, $z(t,x)$ is the solution of the equation \eqref{s1}. Moreover, by uniqueness, we have $v(t,x)=z(t,x)\geq 0.$
	 Again by using Lemma \ref{l1}, we obtain  that if $f(x)$ is not identically zero, then $v(t,x)>0$ in $D \times (0,T \wedge \tau_{N}]$.
	 
	 
	 Using the increasing property of the functions $x^{p}$ and $x^{q}$ with $p,q>1$, as a consequence of Lemma \ref{l1}, we have the following comparison principle:
	 \begin{proposition}[Pathwise comparison principle]\label{p2}
Let $T>0$, $N\in\mathbb{N}$ and let $\tau_N$ be the stopping time defined in
\eqref{ik19}. Denote
\[
D_{T\wedge\tau_N}:=D\times (0,T\wedge\tau_N].
\]
Let $v$ be the (mild) solution of \eqref{s1} on $[0,T\wedge\tau_N]$ with initial
datum $f\in L^2(D)$, and assume that for $\mathbb{P}$-a.e.\ $\omega$,
\[
v(\cdot,\cdot;\omega)\in C\big([0,T\wedge\tau_N(\omega)]\times\overline D\big)
\]
and $v(t,\cdot;\omega)\in H^{\alpha/2}_0(D),$ for all $t\in(0,T\wedge\tau_N(\omega)]$.

Let $V:\Omega\times[0,T\wedge\tau_N]\times\mathbb{R}^d\to\mathbb{R}$ be an
$\{\mathcal{F}_t\}$-adapted random field such that, for $\mathbb{P}$-a.e.\
$\omega$,
\[
V(\cdot,\cdot;\omega)\in C\big([0,T\wedge\tau_N(\omega)]\times\mathbb{R}^d\big)
\]
and, for each fixed $\omega$, the following holds in the viscosity sense on
$D_{T\wedge\tau_N(\omega)}$:

\begin{itemize}
\item[(i)] (Supersolution case) $V$ is a viscosity supersolution of
\begin{equation*}
\partial_t V(t,x)
\;\ge\;
\big(\Delta_\alpha+\gamma\big)V(t,x)
+ \delta\,e^{(q-1)\sigma B^H(t)}\int_D V^q(t,y)\,\mathrm{d}y
- \beta\,e^{(p-1)\sigma B^H(t)}V^p(t,x),
\end{equation*}
with
\begin{equation*}\
V(t,x)\;\ge\;0
\quad\text{for }(t,x)\in(0,T\wedge\tau_N]\times(\mathbb{R}^d\setminus D),
\qquad
V(0,x)\;\ge\;f(x)\ \text{for }x\in D;
\end{equation*}

\item[(ii)] (Subsolution case) $V$ is a viscosity subsolution of
\begin{equation*}
\partial_t V(t,x)
\;\le\;
\big(\Delta_\alpha+\gamma\big)V(t,x)
+ \delta\,e^{(q-1)\sigma B^H(t)}\int_D V^q(t,y)\,\mathrm{d}y
- \beta\,e^{(p-1)\sigma B^H(t)}V^p(t,x),
\end{equation*}
with
\begin{equation*}
V(t,x)\;\le\;0
\quad\text{for }(t,x)\in(0,T\wedge\tau_N]\times(\mathbb{R}^d\setminus D),
\qquad
V(0,x)\;\le\;f(x)\ \text{for }x\in D.
\end{equation*}
\end{itemize}

Then, for $\mathbb{P}$-a.e.\ $\omega$,
\begin{equation*}
V(t,x;\omega)\;\ge\;v(t,x;\omega)
\quad\big(\text{respectively }V(t,x;\omega)\;\le\;v(t,x;\omega)\big),
\quad\text{for all }(t,x)\in[0,T\wedge\tau_N(\omega)]\times\overline D.
\end{equation*}

In other words, any viscosity supersolution (respectively subsolution) $V$ of
~\eqref{s1}
dominates (respectively is dominated by) the mild solution $v$ of problem
\eqref{s1} on $[0,T\wedge\tau_N]\times\overline D$.
\end{proposition}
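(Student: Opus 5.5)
The plan is to reduce the comparison to the linear maximum principle of Lemma~\ref{l1} applied to the difference of the two functions. Fix $\omega$ outside a null set, so that $B^H(\cdot;\omega)$ is a continuous path with $|B^H(t)|\le N$ on $[0,T\wedge\tau_N(\omega)]$. Both $v$ and $V$ are then continuous and bounded on the compact set $[0,T\wedge\tau_N]\times\overline D$; $V$ is also bounded on $\mathbb{R}^d$ once we restrict to bounded continuous random fields. I treat the supersolution case (i) and set $w:=V-v$. Case (ii) is symmetric, with $w:=v-V$.

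The first step is to linearise the nonlinearities by the mean value theorem. Define
\[
a(t,x):=\int_0^1 p\,\big(\theta V+(1-\theta)v\big)^{p-1}\,d\theta ,\qquad
b(t,y):=\int_0^1 q\,\big(\theta V+(1-\theta)v\big)^{q-1}\,d\theta ,
\]
where we use $v\ge0$ and restrict to $V\ge0$, or use $|\cdot|^{r-1}\cdot$ extensions as in the positivity argument above. Then $V^p-v^p=a\,w$ and $V^q-v^q=b\,w$. Next set
\[
c_1(t,x):=\gamma-\beta e^{(p-1)\sigma B^H(t)}a(t,x),\qquad c_2(t,y):=\delta e^{(q-1)\sigma B^H(t)}b(t,y)\ge0 .
\]
Both coefficients are bounded on $D_{T\wedge\tau_N}$, because $V$ and $v$ are bounded there and $|B^H|\le N$. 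This is exactly where the stopping time $\tau_N$ is used. So \eqref{sk25} holds.

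The second step, which I expect to be the main technical obstacle, is to check that $w$ is a viscosity supersolution of \eqref{eq:visc-fractional} with these coefficients and $d=1$. The difficulty is that one cannot subtract viscosity inequalities naively. To get around this, I use Remark~\ref{mv} together with the stated regularity $v(t,\cdot)\in H^{\alpha/2}_0(D)$ and continuity. I then argue that $v$ may be treated as a classical (or at least strong) solution on $D_{T\wedge\tau_N}$, since it solves a linear fractional heat equation with a continuous bounded right-hand side. If this is not available directly, I first mollify $v$ in space and time, or approximate via the mild formula \eqref{e3}, and pass to the limit using the stability of viscosity supersolutions. Take a test function $\varphi$ touching $w$ from below at $(t_0,x_0)$. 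Then $\varphi+v$ touches $V$ from below there. Applying the supersolution inequality for $V$ and subtracting the equation for $v$ pointwise gives
\[
\varphi_t-\Delta_\alpha\varphi\ \ge\ \gamma w-\beta e^{(p-1)\sigma B^H}(V^p-v^p)+\delta e^{(q-1)\sigma B^H}\int_D (V^q-v^q)\,dy
= c_1 w+\int_D c_2 w\,dy
\]
at $(t_0,x_0)$. The sign convention for $\Delta_\alpha$ is to be taken as in Definition~\ref{def:visc-supersolution}.

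The final step is to verify the side conditions. On the exterior, $V\ge0=v$, so $w\ge0$. At $t=0$ we have $V(0,\cdot)\ge f=v(0,\cdot)$, so $w(0,\cdot)\ge0$. Lemma~\ref{l1}(1) then yields $w\ge0$ on $[0,T\wedge\tau_N]\times\overline D$, that is, $V\ge v$. Since $\omega$ was an arbitrary element of a full-measure set, the comparison holds $\mathbb{P}$-a.s. Adaptedness plays no role beyond ensuring that the statement is about random fields. For case (ii) the identical argument with $w=v-V$ and the reversed exterior and initial inequalities gives $V\le v$.
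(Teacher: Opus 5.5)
Your argument is correct and is the route the paper intends. The paper gives no separate proof: it states the proposition as a consequence of Lemma~\ref{l1} together with the monotonicity of $x^p$ and $x^q$. That is exactly your mean-value linearisation of $w=V-v$ (resp.\ $w=v-V$), with $c_2\ge0$ and with $c_1,c_2$ bounded on $[0,T\wedge\tau_N]$. The paper also leaves implicit the step you flag, namely that $w$ is a viscosity supersolution of the linearised problem. That step needs interior regularity of $v$, or of $V$ as in the paper's applications ($V=b\varphi_1$ in Theorem~\ref{thm5.1}, $V$ the solution of \eqref{nre3} in Theorem~\ref{t2}), so that the regular function can be absorbed into the test function. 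Since $v$ is not $C^{1,2}$ up to $\partial D$, this needs a local-test-function viscosity definition or your mollification and stability fallback.
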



    Next, our aim is to find random times $\tau_{*}, \tau^{*}$ such that $0\leq \tau_*\leq  \tau_{b} \leq  \tau^{*}.$ 
	
	\subsection*{A Lower Bound for blow--up time $\tau_b$}\label{sec4}
	This subsection is devoted to deriving an explicit lower bound $\tau_\ast$ for
the blow--up time $\tau_b$ of \eqref{b1}, in the sense that $\tau_\ast\le \tau_b$.
We also establish a convenient sufficient condition guaranteeing that the mild
solution of \eqref{b1} exists globally in time.

	Let us first consider the problem:
	\begin{equation}\label{nre3}
		\left\{
		\begin{aligned}
			\frac{\partial w(t,x)}{\partial t}&=(\Delta_{\alpha}+\gamma)w(t,x)+\delta e^{\left( q-1\right)\sigma B^{H}(t) }\int_{D}w^{q}(t,y)dy, \ \ x \in  D,\ t>0,\\
			w(t,x)&=0, \ \ x \in \mathbb{R}^{d}\backslash D,\ t>0, \\
			w(0,x)&=f(x), \ \  x \in D.  
		\end{aligned}
		\right.
	\end{equation}
	Then the mild solution  $w$ of \eqref{nre3} satisfies the following integral equation:
\begin{equation}\label{st26}
w(t,x)=e^{\gamma t}S_\alpha(t)f(x)+\delta \int_{0}^{t}  e^{\gamma (t-r)} S_\alpha(t-r)\Bigg[e^{\left( q-1\right) \sigma B^{H}(r) }\int_{D}w^{q}(r,y)dy \Bigg] dr,
	\end{equation}
	for each $x \in D$ and $0 \leq t<\tau_{max}.$ 
    
    The next result yields an explicit lower bound for the blow--up time $\tau_{b}$ of
problem~\eqref{s1}, and hence also of the initial SPDE problem~\eqref{b1}.

\begin{theorem}\label{t2}
Let $\tau_{\ast}$ be defined by
\begin{equation}\label{STL1}
\tau_{\ast}
:= \inf \Bigg\{ t\geq 0 :
\int_{0}^{t} e^{(q-1)\sigma B^{H}(r)}\,
\big\|e^{\gamma r }S_{\alpha}(r)\big\|_{\infty}^{\,q-1}\, dr
\geq \frac{1}{\delta |D|\,(q-1)\,\|f\|_{\infty}^{\,q-1}}
\Bigg\}.
\end{equation}
Then the stopping (blow--up) time $\tau_b$ of problem \eqref{s1}
satisfies
\[
\tau_{\ast}\leq \tau_b.
\]
\end{theorem}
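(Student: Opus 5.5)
Since the dissipative term is nonnegative in $v$, the plan is to bound $v$ from above by the solution $w$ of \eqref{nre3}, and then bound $w$ by a scalar comparison function whose existence time can be computed. Because $v\ge0$ and the term $-\beta e^{(p-1)\sigma B^H(t)}v^p$ is nonpositive, $v$ is a viscosity subsolution of \eqref{nre3}. The difference $w-v$ then satisfies a linear problem of the form \eqref{eq:visc-fractional}. In it, $c_2(t,y)=\delta e^{(q-1)\sigma B^H}\,\frac{w^q-v^q}{w-v}\ge0$ by the mean value theorem, and $c_2$ is bounded on $[0,T\wedge\tau_N]$, on which both solutions stay bounded. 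Lemma~\ref{l1} then gives $0\le v\le w$ there, exactly as in the argument behind Proposition~\ref{p2}. Consequently the blow--up time of $w$ is a lower bound for $\tau_b$, and it suffices to show that $w$ stays bounded for $t<\tau_\ast$.

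Next I would work with the mild formulation \eqref{st26}. I set $\psi(t):=\|e^{\gamma t}S_\alpha(t)\|_\infty$, the operator norm on $L^\infty$; $\psi$ is finite, with $\psi(t)\le e^{\gamma t}$ since $S_\alpha$ is sub-Markovian. The ansatz is a supersolution of the form
\[
W(t,x)=G(t)\,e^{\gamma t}S_\alpha(t)f(x),
\]
with $G(0)=1$ and $G$ nondecreasing. Using the semigroup property, $e^{\gamma(t-r)}S_\alpha(t-r)e^{\gamma r}S_\alpha(r)f=e^{\gamma t}S_\alpha(t)f$. We also have the pointwise bound
\[
\int_D W^q(r,y)\,dy\le |D|\,G(r)^q\,\psi(r)^q\|f\|_\infty^q .
\]
To obtain a supersolution of the mild equation, I would treat the constant $\mathbf 1_D$ produced by the nonlocal term by writing $\mathbf 1_D\le \|f\|_\infty^{-1}\ldots$ This is awkward because $f$ need not be bounded below. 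It is cleaner to take the ansatz $W(t,x)=G(t)\,\|f\|_\infty\, e^{\gamma t}S_\alpha(t)\mathbf 1_D(x)$ and to compare via the identity $e^{\gamma (t-r)}S_\alpha(t-r)\mathbf 1_D\le \ldots$ Here is the main obstacle: matching the constant forcing with the profile of the ansatz. As a fallback I would simply bound everything in sup norm. Setting $M(t):=\sup_{s\le t}\|w(s)\|_\infty$ and using \eqref{st26}, I get
\[
\|w(t)\|_\infty\le \psi(t)\|f\|_\infty+\delta|D|\int_0^t \|e^{\gamma(t-r)}S_\alpha(t-r)\|_\infty\, e^{(q-1)\sigma B^H(r)}\|w(r)\|_\infty^q\,dr .
\]
I would then compare with the solution of the Bernoulli-type equation $h'=\delta|D|e^{(q-1)\sigma B^H(t)}\psi(t)^{q-1}h^q$, $h(0)=\|f\|_\infty$, using the bound $\|w(t)\|_\infty\le \psi(t)h(t)$. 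This bound is justified by the submultiplicativity $\psi(t)\le\psi(t-r)\psi(r)$: substituting $\psi(r)h(r)$ into the integral reproduces $\psi(t)\big(\|f\|_\infty+\int_0^t h'\big)=\psi(t)h(t)$.

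Solving the Bernoulli equation explicitly gives
\[
h(t)^{1-q}=\|f\|_\infty^{1-q}-(q-1)\delta|D|\int_0^t e^{(q-1)\sigma B^H(r)}\psi(r)^{q-1}\,dr .
\]
This stays positive, so $h$ is finite, precisely for $t<\tau_\ast$ as defined in \eqref{STL1}. A standard continuity (bootstrap) argument then yields $\|w(t)\|_\infty\le \psi(t)h(t)<\infty$ on $[0,\tau_\ast)$. Together with the first paragraph and the blow--up alternative in $L^\infty$, this gives $v$ bounded on $[0,\tau_\ast)$, hence $\tau_\ast\le\tau_b$. The localisation by $\tau_N$ is removed by letting $N\to\infty$ using \eqref{fb1}. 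The step needing the most care is the strict bootstrap comparison for the integral inequality: I would perturb $\|f\|_\infty$ to $\|f\|_\infty+\epsilon$ to get strict inequality, and then let $\epsilon\downarrow0$.
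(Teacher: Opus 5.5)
Your route is the paper's route in sup-norm form. You compare $v\le w$ and then dominate $w$ by the Bernoulli envelope $\psi(t)h(t)$ with $h=\|f\|_\infty\mathscr G$; the paper closes with monotone Picard iteration in the order interval $\mathcal M$, you with a bootstrap. The comparison step and the explicit solution of the Bernoulli equation are fine. However, the step you yourself flagged as the main obstacle is not resolved.

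After inserting $\|w(r)\|_\infty\le\psi(r)h(r)$, the integral term becomes
\[
\delta|D|\int_0^t \psi(t-r)\,e^{(q-1)\sigma B^H(r)}\,\psi(r)^{q}\,h(r)^q\,dr .
\]
To bound it by $\psi(t)\int_0^t h'(r)\,dr$ you need $\psi(t-r)\psi(r)\le\psi(t)$. Submultiplicativity gives exactly the reverse, $\psi(t)\le\psi(t-r)\psi(r)$, so your justification points the wrong way. The needed inequality is in fact false. By the leading-mode asymptotics of the killed stable semigroup (intrinsic ultracontractivity), $e^{-\gamma t}\psi(t)=\|S_\alpha(t)\mathbf 1_D\|_\infty\sim c\,e^{-\lambda_1 t}$ with $c=\|\varphi_1\|_\infty\|\varphi_1\|_1/\|\varphi_1\|_2^2>1$, since $\varphi_1$ is not constant on $D$. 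Hence $\psi(t/2)^2/\psi(t)\to c>1$.

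The underlying reason is the one you spotted: the nonlocal source injects the flat profile $\mathbf 1_D$. Positivity only gives $S_\alpha(t)\mathbf 1_D\le \|S_\alpha(r)\|_\infty\, S_\alpha(t-r)\mathbf 1_D$, never the reverse. The paper's invariance step \eqref{st48-new}, justified only by ``the semigroup property'', tacitly uses the same reversed inequality, so it does not supply the missing idea either.

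What your argument does prove is a weaker bound. The bootstrap closes for any weight $\chi\ge\psi$ with $\psi(t-r)\chi(r)\le\chi(t)$, for instance $\chi(t)=e^{\gamma t}$, since $\psi(s)\le e^{\gamma s}$. Equivalently, you can compare with the spatially constant supersolution $y$ solving $y'=\gamma y+\delta|D|e^{(q-1)\sigma B^H}y^q$, $y(0)=\|f\|_\infty$. This yields $\tilde\tau_\ast\le\tau_b$, where $\tilde\tau_\ast$ is defined as in \eqref{STL1} with $e^{(q-1)\gamma r}$ in place of $\|e^{\gamma r}S_\alpha(r)\|_\infty^{q-1}$. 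Because $\psi(r)\le e^{\gamma r}$, one has $\tilde\tau_\ast\le\tau_\ast$, so this is weaker than what the statement claims. Obtaining \eqref{STL1} itself would need a genuinely different envelope or estimate, not the semigroup-property step as written.
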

\begin{proof}
Fix $\omega\in\Omega$ and work pathwise. In order to obtain an explicit upper
bound the stopping (blow--up) time $\tau_b$ of problem \eqref{s1}, we introduce the auxiliary scalar function
$\mathscr{G}$ as the solution of a Bernoulli-type ordinary differential
equation.

Indeed, for $0<t<\tau_\ast$ we define $\mathscr{G}:(0,\tau_\ast)\to(0,\infty)$ by
\begin{equation}\label{st46}
\mathscr{G}'(t)
=\delta |D|\,
e^{(q-1)\sigma B^{H}(t)} \,
\big\|e^{\gamma t }S_{\alpha}(t)\big\|_{\infty}^{q-1}\,
\|f\|_{\infty}^{q-1}\,
\mathscr{G}^{q}(t),
\qquad
\mathscr{G}(0)=1.
\end{equation}
This is a Bernoulli ODE with explicit solution. Solving \eqref{st46} yields
\begin{equation}\label{eq:G-solution}
\mathscr{G}(t)
=\Bigg[
1-(q-1)\delta |D|\,
\|f\|_{\infty}^{q-1}
\int_0^t
e^{(q-1)\sigma B^{H}(r)}\,
\big\|e^{\gamma r }S_{\alpha}(r)\big\|_{\infty}^{q-1}\,dr
\Bigg]^{-\frac{1}{q-1}},
\end{equation}
which is well defined as long as the bracket in \eqref{eq:G-solution} remains
strictly positive. Accordingly, $\mathscr{G}$ is defined on the maximal
interval $[0,\tau_\ast)$, where $\tau_\ast$ is given by \eqref{STL1}.

Next, define the ordered set
\[
\mathcal{M}:=\Big\{v\in C\!\big([0,\tau_\ast);L^\infty(D)\big):\ 
0\le v(t,x)\le e^{\gamma t}\,S_\alpha(t)\|f\|_\infty\,\mathscr{G}(t)\Big\},
\]
and the operator $\mathcal{L}:\mathcal{M}\to \mathcal{M}$ by
\[
(\mathcal{L}v)(t,x)
:=e^{\gamma t}S_\alpha(t)f(x)
+\delta\int_0^t e^{(q-1)\sigma B^H(r)+\gamma(t-r)}\,S_\alpha(t-r)
\Big(\int_D v(r,y)^q\,dy\Big)\,dr .
\]
Since $S_\alpha(t)$ is positivity preserving and $v\ge0$, we have
\begin{equation}\label{st47-new}
e^{\gamma t}S_\alpha(t)f(x)\le (\mathcal{L}v)(t,x).
\end{equation}
Moreover, if $v\in\mathcal{M}$, then
$v(r,\cdot)\le e^{\gamma r}S_\alpha(r)\|f\|_\infty\,\mathscr{G}(r)$ and hence
\begin{equation}\label{st46a}
\int_D v(r,y)^q\,dy
\le |D|\,\|e^{\gamma r}S_\alpha(r)\|_\infty^{q}\,\|f\|_\infty^{q}\,\mathscr{G}(r)^q.
\end{equation}
Using the semigroup property, \eqref{st46a} and the definition \eqref{st46} of
$\mathscr{G}$, we obtain
\begin{align}
(\mathcal{L}v)(t,x)
&\le e^{\gamma t}S_\alpha(t)\|f\|_\infty
+\delta |D|\int_0^t e^{(q-1)\sigma B^H(r)}\,
\|e^{\gamma r}S_\alpha(r)\|_\infty^{q-1}\,
\|f\|_\infty^{q-1}\,\mathscr{G}(r)^q\,dr\; e^{\gamma t}S_\alpha(t)\|f\|_\infty \nonumber\\
&= e^{\gamma t}S_\alpha(t)\|f\|_\infty\,\mathscr{G}(t),
\label{st48-new}
\end{align}
and therefore $\mathcal{L}(\mathcal{M})\subset \mathcal{M}$ by
\eqref{st47-new}--\eqref{st48-new}.

Define the monotone Picard sequence
\[
u^{(0)}(t,x):=e^{\gamma t}S_\alpha(t)f(x),\qquad
u^{(n)}:=\mathcal{L}u^{(n-1)},\quad n\ge1.
\]
By positivity of the integral term, $u^{(0)}\le u^{(1)}$. If $u^{(n-1)}\le u^{(n)}$,
then $(u^{(n-1)})^q\le (u^{(n)})^q$ and, since $S_\alpha$ is positivity preserving,
\[
u^{(n)}=\mathcal{L}u^{(n-1)}\le \mathcal{L}u^{(n)}=u^{(n+1)}.
\]
Hence, $\{u^{(n)}\}_{n\ge0}$ is nondecreasing and bounded above by the envelope
$e^{\gamma t}S_\alpha(t)\|f\|_\infty\,\mathscr{G}(t)$, so it converges pointwise to
a limit $\tilde u(t,x)$ for $0\le t<\tau_\ast$. By monotone convergence,
$\tilde u$ satisfies $\tilde u=\mathcal{L}\tilde u$, i.e.\ it is a mild solution
of \eqref{nre3} (equivalently \eqref{st26}). By uniqueness of the mild solution,
$\tilde u=w$, and therefore
\begin{equation}\label{inq1-new}
0\le w(t,x)\le e^{\gamma t}S_\alpha(t)\|f\|_\infty\,\mathscr{G}(t),
\qquad 0\le t<\tau_\ast,\ x\in D,
\end{equation}
with $\mathscr{G}$ given by \eqref{eq:G-solution}.

Finally, since $w$ solves \eqref{nre3}, it is a supersolution of \eqref{s1} in the
sense of Proposition~\ref{p2} (recall that mild solutions are viscosity solutions,
cf.\ Remark~\ref{mv}). By the comparison principle, the unique solution $v$ of
\eqref{s1} satisfies \begin{equation}\label{inq1-new1}
0\le v(t,x)\le w(t,x)\le e^{\gamma t}S_\alpha(t)\|f\|_\infty\,\mathscr{G}(t),
\qquad 0\le t<\tau_\ast,\ x\in D.
\end{equation}  Combining this with
\eqref{inq1-new} and \eqref{eq:G-solution} yields the claimed estimate and
concludes the proof.
\end{proof}
The next result provides a convenient sufficient condition ensuring that the
solution of \eqref{b1} exists globally in time.

\begin{corollary}\label{mthm1}
Assume that the initial datum $f$ satisfies
\begin{equation}\label{bb2}
\delta |D|\,(q-1)\int_{0}^{\infty}
e^{(q-1)\sigma B^{H}(r)}\,
\big\| e^{\gamma r }S_{\alpha}(r)\big\|_{\infty}^{\,q-1}\,
\|f\|_{\infty}^{\,q-1}\,dr<1.
\end{equation}
Then the (mild) solution $v$ of \eqref{s1}, and hence the corresponding solution
$u$ of \eqref{b1}, exists globally in time, that is $\tau_{b}=\infty$. Moreover, for all
$(t,x)\in[0,\infty)\times D$,
\[
0\le v(t,x)\le e^{\gamma t}S_{\alpha}(t)\|f\|_{\infty}
\Bigg[
1-\delta |D|(q-1)\int_{0}^{t}
e^{(q-1)\sigma B^{H}(r)}\,
\big\|e^{\gamma r }S_{\alpha}(r)\big\|_{\infty}^{\,q-1}\,
\|f\|_{\infty}^{\,q-1}\,dr
\Bigg]^{-\frac{1}{q-1}} .
\]
\end{corollary}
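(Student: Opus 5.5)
This corollary should follow directly from Theorem~\ref{t2} and the supersolution bound \eqref{inq1-new1}. Fix $\omega$ outside a null set, so that $B^H(\cdot,\omega)$ is continuous, and work pathwise. Put
\[
I(t):=\delta|D|(q-1)\|f\|_\infty^{q-1}\int_0^t e^{(q-1)\sigma B^H(r)}\big\|e^{\gamma r}S_\alpha(r)\big\|_\infty^{q-1}\,dr .
\]
The integrand is nonnegative, so $I$ is continuous and nondecreasing. Hypothesis \eqref{bb2} says precisely that $I(\infty):=\lim_{t\to\infty}I(t)<1$. Hence $I(t)\le I(\infty)<1$ for every $t\ge0$, and the threshold in \eqref{STL1} is never reached. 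Therefore $\tau_\ast=\inf\emptyset=+\infty$. Theorem~\ref{t2} then gives $\tau_b\ge\tau_\ast=\infty$, so $v$ is global. By Theorem~\ref{thm2.1} and the remark after it, $u=e^{\sigma B^H}v$ has the same blow-up time, so $u$ is global as well.

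For the pointwise bound we reuse the chain \eqref{inq1-new1} from the proof of Theorem~\ref{t2}. That chain holds for every $t<\tau_\ast=\infty$, with $\mathscr{G}(t)=[1-I(t)]^{-1/(q-1)}$ given by \eqref{eq:G-solution}. This is exactly the displayed estimate. A useful by-product is $\mathscr{G}(t)\le(1-I(\infty))^{-1/(q-1)}$ uniformly in $t$, so the envelope stays finite on every bounded interval. This is consistent with the continuation criterion: $\|v(t)\|_\infty$ cannot become unbounded in finite time.

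The only delicate point is the use of the comparison principle. Proposition~\ref{p2} is stated on $[0,T\wedge\tau_N]$. I would first apply it with fixed $T$ and $N$. Then I would let $N\to\infty$, using that $\tau_N\uparrow\infty$ a.s. by the continuity of $B^H$ and \eqref{fb1}. Finally I would let $T\to\infty$. At each stage the bound \eqref{inq1-new1} is uniform in $N$, so passing to the limit is immediate. One also needs the envelope to be finite for each $t$, which holds because $\|e^{\gamma t}S_\alpha(t)\|_\infty\le e^{\gamma t}$ by the sub-Markov property of $S_\alpha$. Beyond this routine bookkeeping I expect no real obstacle.
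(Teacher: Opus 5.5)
Your proposal is correct and follows the paper's proof: hypothesis \eqref{bb2} means the threshold in \eqref{STL1} is never reached, so $\tau_\ast=+\infty$, Theorem~\ref{t2} gives $\tau_b=\infty$, and the pointwise estimate is \eqref{inq1-new1} with $\mathscr{G}$ from \eqref{eq:G-solution}, now valid for all $t\ge0$. The paper leaves implicit two points you spell out, both consistent with its argument: localising the comparison principle through $\tau_N$ and then letting $N,T\to\infty$, and the finiteness of the envelope via $\|e^{\gamma t}S_\alpha(t)\|_\infty\le e^{\gamma t}$.
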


\noindent
\emph{Proof.}
Condition \eqref{bb2} implies that the bracket in \eqref{eq:G-solution} remains
strictly positive for every $t\ge0$, so that the auxiliary function
$\mathscr{G}(t)$ is finite on $[0,\infty)$. Combining Theorem~\ref{t2} with the
upper bound \eqref{inq1-new} therefore yields $\tau_\ast=+\infty$, hence
$\tau_b=+\infty$ and global existence. The pointwise estimate for $v$ follows by
evaluating \eqref{inq1-new} with \eqref{eq:G-solution}.
\qed


\begin{remark}[Fractional diffusion/noise and the lower bound $\tau_\ast$]
\label{rem:tau-star-comparison-compact}
Let $\tau_\ast^{(2,1/2)}$ be the lower bound in \cite{liang} (classical diffusion
$\alpha=2$ and Brownian noise $H=\tfrac12$), and let $\tau_\ast^{(\alpha,H)}$ be
given by \eqref{STL1}. Since $\tau_\ast$ is the first time a nondecreasing
integral reaches a fixed threshold, a larger integrand yields a smaller
$\tau_\ast$, and vice versa.

\smallskip
\noindent
\emph{Effect of the fractional Laplacian.}
For large times the semigroup decay is governed by the principal Dirichlet
eigenvalue $\lambda_1^{(\alpha)}(D)$ of $-(-\Delta)^{\alpha/2}$, so larger
$\lambda_1^{(\alpha)}$ strengthens damping, reduces the integrand, and tends to
\emph{increase} $\tau_\ast$. Using the comparison
$\lambda_1^{(\alpha)}\asymp (\lambda_1^{(2)})^{\alpha/2}$ on convex domains
\cite{DydaKuznetsovKwasnicki2017}, the sign depends on the domain scale:
if $\lambda_1^{(2)}>1$ (small/confined $D$) then $\lambda_1^{(\alpha)}<\lambda_1^{(2)}$
for $\alpha<2$, so fractional diffusion weakens damping and tends to
\emph{decrease} $\tau_\ast$; if $\lambda_1^{(2)}<1$ (large $D$) the opposite trend
holds, and $\tau_\ast$ tends to \emph{increase}.

\smallskip
\noindent
\emph{Effect of fractional Brownian noise.}
Replacing $W_t$ by $B_t^H$ with $H>\tfrac12$ increases the typical magnitude and
persistence of excursions of the Gaussian weight $e^{(q-1)\sigma B^H(t)}$
(cf.\ $\Var(B_t^H)=t^{2H}$), which tends to \emph{decrease} $\tau_\ast$ when
$\sigma>0$ and to \emph{increase} it when $\sigma<0$ (see, e.g.,
\cite{Mishura2008,Nualart2006}).

\smallskip
\noindent
In summary, compared to \cite{liang} the net change in $\tau_\ast$ is not
universally monotone: fractional diffusion may increase or decrease $\tau_\ast$
depending on the domain scale, while fractional Brownian forcing typically
pushes $\tau_\ast$ down for $\sigma>0.$ 
\end{remark}
\begin{remark}[Effect of $\sigma$ on the lower bound $\tau_\ast$]\label{rem:sigma-effect-tauast-short}
Let $\tau_\ast(\sigma)$ be defined by \eqref{STL1}. For $\sigma=0$,
\[
\tau_\ast(0)
=\inf\Bigg\{t\ge0:\int_0^t \big\|e^{\gamma r}S_\alpha(r)\big\|_\infty^{\,q-1}\,dr
\ge \frac{1}{\delta|D|(q-1)\|f\|_\infty^{\,q-1}}\Bigg\}.
\]
Thus noise affects $\tau_\ast$ only through the weight $e^{(q-1)\sigma B^H(r)}$.
Since $\tau_\ast$ is a first hitting time of a nondecreasing integral, larger
values of $e^{(q-1)\sigma B^H(r)}$ (on the relevant time window) make the
threshold reached sooner, hence \emph{decrease} $\tau_\ast$, while smaller values
\emph{increase} it. In particular, for $\sigma>0$ sustained positive excursions
of $B^H$ tend to yield $\tau_\ast(\sigma)\le \tau_\ast(0)$, whereas sustained
negative excursions yield $\tau_\ast(\sigma)\ge \tau_\ast(0)$. For $H>\tfrac12$, the persistence of excursions of $B^H$
typically amplifies this effect; see, e.g., \cite{Mishura2008,Nualart2006}.
\end{remark}

	 \subsection*{Upper bound for blow--up time $\tau_b$}\label{sec5}
	In this subsection we derive an explicit \emph{upper bound} $\tau^\ast$ for the
blow--up time $ \tau_{b}$ under suitable assumptions. Let $t\in(0,\infty)$ be a (fixed)
random time and choose $b>1$ such that
\begin{equation}\label{B1}
\left\{
\begin{aligned}
b^{\,q-p}\delta\, e^{-\sigma (q-p)B_{\ast}^{H}(t)}
&\geq \frac{\beta M_{1}^{p}+\lambda_{1}M_{1}}{|D|^{\,1-q}}\geq \frac{\beta M_{1}^{p}}{|D|^{\,1-q}}\  \text{and}\\[0.2em]
b^{\,q-p}\delta\, e^{-\sigma (q-1)B_{\ast}^{H}(t)}
&\geq
\frac{2\beta \left(\int_{D}\varphi_1^{\frac{q}{q-p}}(x)\,dx \right)^{\frac{q-p}{p}}}
{\left(\int_{D} \varphi_1^{p+1}(x)\,dx \right)^{\frac{q-p}{p}}}.
\end{aligned}
\right.
\end{equation}
Recall that $(\lambda_1,\varphi_1)$ denotes the principal Dirichlet eigenpair of
$-\Delta_\alpha$ as in \eqref{a3}, normalised by \eqref{eq:first-eig-norm}, and
\(M_1:=\sup_{x\in D}\varphi_1(x)>0\) while $B_*^H(t)$ is given by \eqref{st71}. The next result then yields an upper bound
for the blow--up time of the solution $v(\cdot,\cdot)$ to \eqref{s1}, and consequently for the
solution $u(\cdot, \cdot)$ of \eqref{b1}.

\begin{theorem}\label{thm5.1}
Assume that $p,q>1$ with $p>q$. Let the initial datum satisfy
\[
f(x)\ge b\,\varphi_1(x),\qquad x\in D,
\]
and fix $b>1$ such that the conditions in \eqref{B1} hold. Then the blow--up time
$\tau_b$ of the solution satisfies $\tau_b\le \tau^\ast$, where
\begin{equation}\label{ST1}
\tau^{\ast}
:= \inf \Biggl\{ t \geq 0 :
\int_{0}^{t} e^{\sigma (q-1)B^{H}(s)+(-\lambda_{1}+\gamma)(q-1)s}\,ds
\geq
\frac{2\,J(0)^{\,1-q}}
{\delta (q-1)\left(\displaystyle\int_{D}\varphi_1^{\frac{q}{q-p}}(x)\,dx \right)^{\frac{p-q}{p}}}
\Biggr\},
\end{equation}
with
\[
J(0):=\int_{D} f(x)\,\varphi_1(x)\,dx.
\]
\end{theorem}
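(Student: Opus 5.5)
The plan is Kaplan's eigenfunction method applied pathwise to the random PDE \eqref{s1}. By Theorem~\ref{thm2.1} and the remark following it, $\tau_b$ is also the blow--up time of \eqref{b1}, so it suffices to work with $v$. Fix $\omega$, fix the random time $t$ and $b>1$ for which \eqref{B1} holds, and set
\[
J(s):=\int_D v(s,x)\varphi_1(x)\,dx,\qquad 0\le s<\tau_b\wedge t .
\]
Testing \eqref{s1} against $\varphi_1$ and using $-\Delta_\alpha\varphi_1=\lambda_1\varphi_1$ together with $\int_D\varphi_1=1$ from \eqref{eq:first-eig-norm} gives
\[
J'(s)=(\gamma-\lambda_1)J(s)+\delta e^{(q-1)\sigma B^H(s)}\int_D v^q\,dy-\beta e^{(p-1)\sigma B^H(s)}\int_D v^p\varphi_1\,dx .
\]
The goal is to reduce this to a Bernoulli inequality of the form
\[
J'\ge(\gamma-\lambda_1)J+\tfrac{\delta}{2}\,e^{(q-1)\sigma B^H}\,\kappa\,J^q ,
\]
where the constant $\kappa$ is matched to the denominator in \eqref{ST1}.

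\emph{Step 1: a lower barrier.} Using Proposition~\ref{p2}(ii), I show that $V(s,x):=b\varphi_1(x)$ is a viscosity subsolution of \eqref{s1} on $[0,t\wedge\tau_N]$. Substituting $V$ into the equation, the required inequality is
\[
(\lambda_1-\gamma)\,b\varphi_1+\beta e^{(p-1)\sigma B^H}b^p\varphi_1^p\le \delta e^{(q-1)\sigma B^H}b^q\int_D\varphi_1^q .
\]
Since $|\sigma B^H(s)|\le\sigma B^H_\ast(t)$, since $\varphi_1\le M_1$, and since $\int_D\varphi_1^q\ge|D|^{1-q}$ by Jensen, this follows after dividing by $b^p e^{(p-1)\sigma B^H}$ precisely from the first line of \eqref{B1}. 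Because $f\ge b\varphi_1$, the proposition yields $v\ge b\varphi_1$, and letting $N\to\infty$ removes the stopping time.

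\emph{Step 2: absorbing the damping term.} This is the main obstacle. Since $p>q$, the term $\beta\int_D v^p\varphi_1$ is not dominated by $\int_D v^q$ at large amplitude, so the bound must hold only on the range allowed by the barrier and must be paid for through the size of $b$. I would first apply Hölder's inequality to write $\int_D v^q$ in terms of $\int_D v^p\varphi_1^{p+1}$ and the weight $\int_D\varphi_1^{q/(q-p)}$; these are exactly the quantities appearing in the second line of \eqref{B1}. I would then use $v\ge b\varphi_1$ to compare $v^{p-q}$ with $b^{p-q}$. The second condition in \eqref{B1} is designed so that the outcome is
\[
\beta e^{(p-1)\sigma B^H}\int_D v^p\varphi_1\le\tfrac{\delta}{2}\,e^{(q-1)\sigma B^H}\int_D v^q ,
\]
with the factor $2$ accounting for the $2$ in \eqref{ST1}. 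I expect this to be the delicate point, because for $p>q$ the exponent $q/(q-p)$ is negative. The argument therefore has to stay inside the finiteness hypothesis implicit in \eqref{B1}, and the sign of $q-p$ has to be tracked carefully in each power. If the direct Hölder step fails, my fallback is to split $D$ into the set where $v\le K\varphi_1$ and its complement, and to use the barrier on each piece.

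\emph{Step 3: Bernoulli integration.} With the reduced inequality, I bound $\int_D v^q$ from below by a multiple of $J^q$, via Hölder's inequality against the weight $\varphi_1$, with constant $\kappa=\bigl(\int_D\varphi_1^{q/(q-p)}\bigr)^{(p-q)/p}$. I then set $K(s):=e^{(\lambda_1-\gamma)s}J(s)$, which gives
\[
K'\ge\tfrac{\delta\kappa}{2}\,e^{\sigma(q-1)B^H(s)+(\gamma-\lambda_1)(q-1)s}K^q .
\]
Integrating $(K^{1-q})'\le-\tfrac{(q-1)\delta\kappa}{2}\,e^{\sigma(q-1)B^H(s)+(\gamma-\lambda_1)(q-1)s}$ from $0$ to $s$ shows that $K^{1-q}$ must vanish no later than the first time
\[
\int_0^s e^{\sigma(q-1)B^H(r)+(\gamma-\lambda_1)(q-1)r}\,dr\ge\frac{2J(0)^{1-q}}{\delta(q-1)\kappa},
\]
which is $\tau^\ast$ in \eqref{ST1}. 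Since $J(s)\le\|v(s)\|_\infty$, the function $v$ cannot remain bounded beyond $\tau^\ast$, and hence $\tau_b\le\tau^\ast$.
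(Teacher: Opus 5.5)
Your route is the paper's: the barrier $b\varphi_1$ via Proposition~\ref{p2}, testing \eqref{s1} against $\varphi_1$, H\"older with the weight $\kappa:=\bigl(\int_D\varphi_1^{q/(q-p)}\bigr)^{(p-q)/p}$, Jensen, and Bernoulli integration. The Bernoulli integration in Step~3 (the substitution $K=e^{(\lambda_1-\gamma)s}J$ and the resulting threshold) is correct.

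\textbf{The gap is Step~2.} You only assert that \eqref{B1} is ``designed'' to give $\beta e^{(p-1)\sigma B^H}\int_D v^p\varphi_1\le\frac{\delta}{2}e^{(q-1)\sigma B^H}\int_D v^q$. With $p>q$ this cannot be proved, for several reasons.
\begin{itemize}
\item The inequality is false at large amplitude, even among functions obeying your barrier. For $v=K\varphi_1$ with $K\ge b$, the left side grows like $K^p$ and the right side like $K^q$.
\item A lower barrier $v\ge b\varphi_1$ bounds $v^{p-q}$ from below, which is the wrong direction for absorbing the damping term.
\item H\"older with exponents $p/q$ and $p/(p-q)$ gives $\int_D v^q\le\kappa\bigl(\int_D v^p\varphi_1\bigr)^{q/p}$. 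This is the reverse of the bound $\int_D v^q\ge\kappa J^q$ that your Step~3 uses. For example, take $v\equiv c\ge bM_1$ on $D$. Then $\int_D v^q=c^q|D|<c^q\kappa=\kappa J^q$, because $|D|=\int_D\varphi_1^{q/p}\varphi_1^{-q/p}\le(\int_D\varphi_1)^{q/p}\kappa=\kappa$, with strict inequality since $\varphi_1$ is not constant.
\item Step~1 also fails to follow from the first line of \eqref{B1} when $p>q$. After your division you need a lower bound for $e^{(q-p)\sigma B^H(s)}$, namely $e^{-(p-q)\sigma B^H_\ast(t)}$. But \eqref{B1} contains $e^{-\sigma(q-p)B^H_\ast(t)}=e^{(p-q)\sigma B^H_\ast(t)}$.
\end{itemize}
Splitting $D$ cannot rescue this, because for $p>q$ the conclusion itself fails in general. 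Every sufficiently large constant is, pathwise on each $[0,T]$, a supersolution of \eqref{s1}, so $\tau_b=\infty$ (Theorem~\ref{thm:SF-gb1-fBm}(ii)). Meanwhile $\tau^\ast<\infty$ a.s.\ whenever, say, $\gamma>\lambda_1$.

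\textbf{The missing idea is that the argument closes only for $q>p$.} This is what the paper's proof actually uses: its H\"older step $\int_D v^q\ge\kappa(\int_D v^p\varphi_1)^{q/p}$ holds only then. It is also what both lines of \eqref{B1} and Theorem~\ref{thm5.2} are built for. The hypothesis should therefore be read as $q>p$. In that case H\"older with exponents $q/p$ and $q/(q-p)$ applies, and $\int_D\varphi_1^{q/(q-p)}<\infty$ automatically. This gives
\[
J'\ge(\gamma-\lambda_1)J+e^{(q-1)\sigma B^H}\Big(\int_D v^p\varphi_1\Big)^{\frac{q}{p}}\Big[\delta\kappa-\beta e^{(p-q)\sigma B^H}\Big(\int_D v^p\varphi_1\Big)^{\frac{p-q}{p}}\Big].
\]
Since $(p-q)/p<0$, the barrier now gives the \emph{upper} bound $(\int_D v^p\varphi_1)^{(p-q)/p}\le b^{p-q}(\int_D\varphi_1^{p+1})^{(p-q)/p}$. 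Combined with $e^{(p-q)\sigma B^H(s)}\le e^{(q-1)\sigma B^H_\ast(t)}$, the second line of \eqref{B1} says exactly that the bracket is at least $\delta\kappa/2$. Jensen ($\int_D v^p\varphi_1\ge J^p$) then feeds your Step~3 unchanged.

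Two smaller points, which the paper also glosses over:
\begin{itemize}
\item Since \eqref{B1} is tied to $B^H_\ast(t)$, the barrier and the Bernoulli inequality hold only on $[0,t]$. Letting $N\to\infty$ does not remove $t$, so you obtain $\tau_b\le\tau^\ast$ only on $\{\tau^\ast\le t\}$.
\item In Step~1, after your division the $\lambda_1$-term carries the factor $b^{1-p}e^{-(p-1)\sigma B^H(s)}$. The first line of \eqref{B1} absorbs this only if, for instance, $b\ge e^{\sigma B^H_\ast(t)}$.
\end{itemize}
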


\begin{proof}
\textbf{Step 1: A stationary subsolution and a lower bound for $v$.}
Fix $b>1$ and define
\[
\bar v(t,x):=b\,\varphi_1(x),\qquad t>0,\ x\in D,
\]
and $\bar v(t,x)=0$ for $x\in\mathbb{R}^d\setminus D$. By assumption,
\[
v(0,x)\ge f(x)\ge b\,\varphi_1(x)=\bar v(0,x),\qquad x\in D.
\]
Consider the (random) PDE satisfied by $v$ in \eqref{s1}. Writing the nonlinear
reaction part evaluated at $\bar v$ as
\begin{align}\label{eq:I1-def}
I_{1}(t,x)
&:=\delta e^{(q-1)\sigma B^{H}(t)}\int_{D}\bar{v}(t,y)^{q}\,\mathrm{d}y
   -\beta e^{(p-1)\sigma B^{H}(t)}\bar{v}(t,x)^{p} \nonumber\\
&=\delta e^{(q-1)\sigma B^{H}(t)}\,b^{q}\int_{D}\varphi_1^{q}(y)\,\mathrm{d}y
   -\beta e^{(p-1)\sigma B^{H}(t)}\,b^{p}\varphi_1^{p}(x),
\end{align}
we first estimate the integral of $\varphi_1^q$. By H\"older's inequality,
\[
\int_{D} \varphi_1(y)\,\mathrm{d}y
\leq \Bigg( \int_{D} \varphi_1^{q}(y)\,\mathrm{d}y\Bigg)^{\!\frac{1}{q}}
      \Big(\int_{D}1^{\frac{q}{q-1}}\,\mathrm{d}x\Big)^{\!\frac{q-1}{q}}
= \Bigg( \int_{D} \varphi_1^{q}(y)\,\mathrm{d}y\Bigg)^{\!\frac{1}{q}} |D|^{\frac{q-1}{q}},
\]
which yields
\[
\int_{D}\varphi_1^{q}(y)\,\mathrm{d}y
\geq \Big(\int_D \varphi_1(y)\,\mathrm{d}y\Big)^{q} |D|^{1-q}.
\]
Under our normalisation $\int_D\varphi_1(x)\,\mathrm{d}x=1$, this implies
\[
\int_{D}\varphi_1^{q}(y)\,\mathrm{d}y\ge |D|^{1-q}.
\]
Using this in \eqref{eq:I1-def}, we obtain
\begin{align*}
I_{1}(t,x)
&\geq \delta e^{(q-1)\sigma B^{H}(t)}\,b^{q}|D|^{1-q}
    -\beta e^{(p-1)\sigma B^{H}(t)}\,b^{p}\varphi_1^{p}(x)\\
&= e^{(p-1)\sigma B^{H}(t)} b^{p}
   \Big[\delta e^{(q-p)\sigma B^{H}(t)}b^{q-p}|D|^{1-q}
       -\beta \varphi_1^{p}(x)\Big].
\end{align*}
Noting that $B_\ast^{H}(t):=\sup_{0\le s\le t}|B^H(s)|$ and using
$e^{\pm\sigma B^{H}(t)}\ge e^{-\sigma B_\ast^{H}(t)}$, we further get
\[
I_1(t,x)
\ge e^{-(p-1)\sigma B_\ast^{H}(t)} b^{p}
   \Big[\delta e^{-(q-p)\sigma B_\ast^{H}(t)}b^{q-p}|D|^{1-q}
       -\beta M_1^{p}\Big],
\]
recalling that \(M_1:=\sup_{x\in D}\varphi_1(x)>0.\)
By the first inequality in \eqref{B1}, the quantity in brackets is bounded
from below by $\lambda_1 b\,\varphi_1(x)$, and hence
\[
I_1(t,x)\ge \lambda_1 b\,\varphi_1(x),\qquad x\in D,\ t\ge0.
\]
On the other hand,
\[
\frac{\partial \bar{v}(t,x)}{\partial t}-(\Delta_{\alpha}+\gamma)\bar{v}(t,x)
=(\lambda_1-\gamma)b\varphi_1(x)\le I_1(t,x)-\gamma\bar v(t,x)\le I_1(t,x),
\]
so $\bar v$ is a subsolution of the random PDE satisfied by $v$ (with the same
boundary condition and smaller initial data). By the comparison principle in
Proposition~\ref{p2}, we conclude that
\begin{equation*}
v(t,x) \geq \bar{v}(t,x)=b\,\varphi_1(x),
\qquad x\in D,\ t\geq 0.
\end{equation*}
\smallskip
\textbf{Step 2: Testing against the first eigenfunction.}
Define
\begin{equation}\label{aek1}
J(t):=\int_D v(t,x)\,\varphi_1(x)\,\mathrm{d}x.
\end{equation}
Testing \eqref{s1} against $\varphi_1$ and using $-\Delta_\alpha\varphi_1=\lambda_1\varphi_1$
yields
\begin{align*}
J'(t)
&=\int_D v_t(t,x)\varphi_1(x)\,\mathrm{d}x\\
&=(-\lambda_1+\gamma)J(t)
+\delta e^{(q-1)\sigma B^H(t)}\Big(\int_D v(t,y)^q\,\mathrm{d}y\Big)
   \Big(\int_D\varphi_1(x)\,\mathrm{d}x\Big)
-\beta e^{(p-1)\sigma B^H(t)}\int_D v(t,x)^p\varphi_1(x)\,\mathrm{d}x .
\end{align*}
By H\"older's inequality,
\[
\int_D v(t,y)^q\,\mathrm{d}y
\ge \Big(\int_D v(t,x)^p\varphi_1(x)\,\mathrm{d}x\Big)^{\frac{q}{p}}
\Big(\int_D \varphi_1(x)^{\frac{q}{q-p}}\,\mathrm{d}x\Big)^{\frac{p-q}{p}}.
\]
Substituting this into the expression for $J'(t)$ gives
\begin{align}\label{eq:Jprime-lower}
&J'(t)
\ge (-\lambda_1+\gamma)J(t)\nonumber\\
&\quad
+e^{(q-1)\sigma B^H(t)}\Big(\int_D v(t,x)^p\varphi_1(x)\,\mathrm{d}x\Big)^{\frac{q}{p}}
\Bigg[
\delta\Big(\int_D \varphi_1^{\frac{q}{q-p}}(x)\,\mathrm{d}x\Big)^{\frac{p-q}{p}}
-\beta e^{(p-q)\sigma B^H(t)}\Big(\int_D v^p(t,x)\varphi_1(x)\,\mathrm{d}x\Big)^{\frac{p-q}{p}}
\Bigg].
\end{align}
\smallskip
\textbf{Step 3: Removing the damping term via \eqref{B1}.}
Using $f\ge b\varphi_1$ and the positivity of $v$, the comparison argument above
implies $v(t,x)\ge b\varphi_1(x)$ and thus
\[
\Big(\int_D v^p(t,x)\varphi_1(x)\,\mathrm{d}x\Big)^{\frac{p-q}{p}}
\ge b^{p-q}\Big(\int_D \varphi_1^{p+1}(x)\,\mathrm{d}x\Big)^{\frac{p-q}{p}}.
\]
The second inequality in \eqref{B1} is chosen precisely so that, after replacing
$B^H$ by $B^H_\ast$ to control the exponential factor,
\[
\beta e^{(p-q)\sigma B^H(t)}\Big(\int_D v^p(t,x)\varphi_1(x)\,\mathrm{d}x\Big)^{\frac{p-q}{p}}
\le \frac{\delta}{2}\Big(\int_D \varphi_1^{\frac{q}{q-p}}(x)\,\mathrm{d}x\Big)^{\frac{p-q}{p}}
\]
on the time interval of interest. Hence the bracket in \eqref{eq:Jprime-lower} is
bounded below by
\[
\frac{\delta}{2}\Big(\int_D \varphi_1^{\frac{q}{q-p}}(x)\,\mathrm{d}x\Big)^{\frac{p-q}{p}},
\]
and we obtain
\begin{equation}\label{eq:Jprime-lower2}
J'(t)\ge (-\lambda_1+\gamma)J(t)
+\frac{\delta}{2}\Big(\int_D \varphi_1^{\frac{q}{q-p}}(x)\,\mathrm{d}x\Big)^{\frac{p-q}{p}}
e^{(q-1)\sigma B^H(t)}\Big(\int_D v(t,x)^p\varphi_1(x)\,\mathrm{d}x\Big)^{\frac{q}{p}}.
\end{equation}
\smallskip
\textbf{Step 4: Jensen’s inequality and a Bernoulli-type differential inequality.}
Since $\varphi_1\ge0$ and $\int_D\varphi_1(x)\,\mathrm{d}x=1$, Jensen’s inequality gives
\[
\int_D v(t,x)^p\varphi_1(x)\,\mathrm{d}x
\ \ge\ \Big(\int_D v(t,x)\varphi_1(x)\,\mathrm{d}x\Big)^p = J(t)^p,
\]
and therefore
\[
\Big(\int_D v(t,x)^p\varphi_1(x)\,\mathrm{d}x\Big)^{\frac{q}{p}} \ \ge\ J(t)^q.
\]
Plugging this into \eqref{eq:Jprime-lower2} yields the Bernoulli-type inequality
\begin{equation}\label{eq:J-ineq-final}
J'(t)\ge (-\lambda_1+\gamma)J(t)
+c_0\,e^{(q-1)\sigma B^H(t)}\,J(t)^q,
\qquad
c_0:=\frac{\delta}{2}\Big(\int_D \varphi_1^{\frac{q}{q-p}}(x)\,\mathrm{d}x\Big)^{\frac{p-q}{p}}.
\end{equation}
\smallskip
\textbf{Step 5: Comparison with the Bernoulli ODE and blow--up time.}
Let $I(t)$ solve the (random-coefficient) Bernoulli ODE
\[
I'(t)=(-\lambda_1+\gamma)I(t)+c_0\,e^{(q-1)\sigma B^H(t)}\,I(t)^q,
\qquad I(0)=J(0).
\]
Standard comparison for scalar differential inequalities implies $J(t)\ge I(t)$
as long as both are finite (see, e.g., \cite[Theorem~1.3]{teschl}). Solving this
Bernoulli equation explicitly gives
\[
I(t)=e^{(-\lambda_1+\gamma)t}
\Bigg[
I(0)^{1-q}-(q-1)c_0\int_0^t e^{(q-1)\sigma B^H(s)+(-\lambda_1+\gamma)(q-1)s}\,\mathrm{d}s
\Bigg]^{-\frac{1}{q-1}}.
\]
Thus $I(t)$ blows up at the time $\tau^\ast$ defined by
\[
\tau^{\ast}
:=\inf\Biggl\{t\ge0:\int_0^t e^{\sigma(q-1) B^H(s)+(-\lambda_1+\gamma)(q-1)s}\,\mathrm{d}s
\ge \frac{I(0)^{1-q}}{(q-1)c_0}\Biggr\},
\]
which is exactly \eqref{ST1}. Since $J(t)\ge I(t)$, the explosion of $I$ forces
$J$ to blow up no later than $\tau^\ast$. Therefore the solution $v$ of \eqref{s1}
(and hence $u$ of \eqref{b1}) cannot exist beyond $\tau_b$, with $\tau_b\le\tau^\ast$.
\end{proof}
\begin{remark}[Fractional diffusion/noise and the upper bound $\tau^\ast$]
\label{rem:tau-star-upper-compact}
Let $\tau^{\ast}_{(2,1/2)}$ denote the analogue of \eqref{ST1} in \cite{liang}
(classical diffusion $\alpha=2$ and Brownian noise $H=\tfrac12$), and let
$\tau^{\ast}_{(\alpha,H)}$ be given by \eqref{ST1}. Since $\tau^\ast$ is the first
time a nondecreasing integral reaches a fixed threshold, a larger integrand in
\eqref{ST1} yields a smaller $\tau^\ast$, and vice versa.

\smallskip
\noindent
\emph{Effect of the fractional Laplacian.}
The semigroup damping in \eqref{ST1} is governed by the principal Dirichlet
eigenvalue $\lambda_1^{(\alpha)}(D)$ of $-(-\Delta)^{\alpha/2}$: larger
$\lambda_1^{(\alpha)}$ strengthens the factor $e^{(-\lambda_1^{(\alpha)}+\gamma)(q-1)s}$,
reduces the integrand, and thus tends to \emph{increase} $\tau^\ast$ (later blow--up
upper bound). On convex domains one has the comparison
$\lambda_1^{(\alpha)}(D)\asymp (\lambda_1^{(2)}(D))^{\alpha/2}$ (up to universal
constants), see \cite{DydaKuznetsovKwasnicki2017}. Hence the direction depends on
the domain scale: if $\lambda_1^{(2)}(D)>1$ (small/confined $D$) then typically
$\lambda_1^{(\alpha)}<\lambda_1^{(2)}$ for $\alpha<2$, so fractional diffusion
weakens damping and tends to \emph{decrease} $\tau^\ast$; if $\lambda_1^{(2)}(D)<1$
(large $D$), the opposite tendency holds and $\tau^\ast$ tends to \emph{increase}.

\smallskip
\noindent
\emph{Effect of fractional Brownian noise.}
Replacing Brownian motion by $B^H$ with $H>\tfrac12$ changes the exponential weight
$e^{\sigma(q-1)B^H(s)}$. Since $\Var(B^H(t))=t^{2H}$, larger $H$ corresponds to
larger typical long-time excursions and stronger persistence, which tends to
\emph{decrease} $\tau^\ast$ when $\sigma>0$ and to \emph{increase} it when $\sigma<0$,
see, e.g., \cite{Mishura2008,Nualart2006}.

\smallskip
\noindent
In summary, relative to \cite{liang} the net change in $\tau^\ast$ is not
universally monotone: fractional diffusion may increase or decrease $\tau^\ast$
depending on the confinement scale of $D$, while fractional Brownian forcing
typically pushes $\tau^\ast$ down for $\sigma>0$.
\end{remark}
\begin{remark}[Effect of $\sigma$ on the upper bound $\tau^\ast$]\label{rem:sigma-taustar-compact}
Let $\tau^\ast(\sigma)$ be defined by \eqref{ST1}. Since $\tau^\ast(\sigma)$ is
the first time a nondecreasing integral reaches a fixed threshold, a larger
integrand yields a smaller $\tau^\ast$, and vice versa. In particular, the
deterministic case $\sigma=0$ gives
\[
\tau^\ast(0)
=\inf\Biggl\{t\ge0:\int_0^t e^{(-\lambda_1+\gamma)(q-1)s}\,ds
\ge
\frac{2\,J(0)^{\,1-q}}
{\delta (q-1)\Big(\int_D\varphi_1^{\frac{q}{q-p}}(x)dx\Big)^{\frac{p-q}{p}}}
\Biggr\}.
\]
For $\sigma\neq 0$, the only change is the random weight
$e^{(q-1)\sigma B^H(s)}$ in \eqref{ST1}. Hence, for $\sigma>0$ one has the
pathwise tendencies
\[
B^H(\cdot)\ge0\ \text{on the relevant window} \ \Rightarrow\ \tau^\ast(\sigma)\le\tau^\ast(0),
\qquad
B^H(\cdot)\le0\ \Rightarrow\ \tau^\ast(\sigma)\ge\tau^\ast(0).
\]
 In short, relative to $\sigma=0$,
multiplicative noise may advance or delay the upper bound depending on the
realized sign/persistence of $B^H$; for $H>\tfrac12$ persistent excursions make
early threshold crossing more likely when $\sigma>0$ (see, e.g.,
\cite{Mishura2008,Nualart2006}).
\end{remark}

    \begin{remark}
\label{rem:interval-blowup}
For initial data of the form
\[
f(x)=b\,\varphi_1(x),\qquad x\in D,\qquad b>1,
\]
and under the hypotheses ensuring the validity of the lower and upper estimates
(so that Theorem~\ref{t2} yields $\tau_\ast\le \tau_b$ and Theorem~\ref{thm5.1}
yields $\tau_b\le \tau^\ast$), the random times \eqref{STL1} and \eqref{ST1}
provide the \emph{pathwise} bracketing
\[
\tau_\ast \;\le\; \tau_b \;\le\; \tau^\ast,
\qquad \mathbb{P}\text{-a.s.}
\]
In particular, $\tau_\ast$ is constructed from a \emph{global $L^\infty$-control}
through the fractional heat semigroup $(S_\alpha(t))_{t\ge0}$, whereas $\tau^\ast$
is obtained by \emph{projection onto the principal mode} $\varphi_1$ and depends
explicitly on the spectral gap $\lambda_1$.

\smallskip
\noindent
\emph{Specialization to $f=b\varphi_1$.}
In this case $J(0)=\int_D f(x)\varphi_1(x) dx = b\int_D \varphi_1^2(x)dx$, and thus the threshold
in \eqref{ST1} becomes
\[
\frac{2\,J(0)^{\,1-q}}
{\delta (q-1)\left(\displaystyle\int_{D}\varphi_1^{\frac{q}{q-p}}(x)dx \right)^{\frac{p-q}{p}}}
=
\frac{2\,b^{\,1-q}\left(\int_D \varphi_1^2(x) dx\right)^{1-q}}
{\delta (q-1)\left(\displaystyle\int_{D}\varphi_1^{\frac{q}{q-p}}(x)dx \right)^{\frac{p-q}{p}}},
\]
while the threshold in \eqref{STL1} reads
\[
\frac{1}{\delta |D|(q-1)\|f\|_\infty^{q-1}}
=
\frac{1}{\delta |D|(q-1)\,b^{\,q-1}\|\varphi_1\|_\infty^{q-1}}.
\]
Hence, both bounds scale as $b^{-(q-1)}$ in the threshold: increasing the initial
amplitude $b$ decreases the right-hand sides, so the defining integrals reach
their thresholds earlier and both $\tau_\ast$ and $\tau^\ast$ typically move
\emph{down} (shorter pre-blow--up window).

\smallskip
\noindent
\emph{Comparison of the two integrands.}
The lower bound \eqref{STL1} involves
\[
e^{(q-1)\sigma B^H(r)}\,\big\|e^{\gamma r}S_\alpha(r)\big\|_\infty^{q-1},
\]
which captures \emph{uniform} semigroup damping (hence depends on the full
nonlocal geometry). By contrast, the upper bound \eqref{ST1} involves
\[
e^{(q-1)\sigma B^H(s)}\,e^{(-\lambda_1+\gamma)(q-1)s},
\]
which isolates the \emph{principal Dirichlet mode} via $\lambda_1$ and is
therefore sharper when the dynamics are dominated by $\varphi_1$.
Consequently, for eigenfunction initial data $b\varphi_1$ one typically expects
$\tau^\ast$ to be closer to $\tau_b$, while $\tau_\ast$ provides a more robust
(but possibly more conservative) guarantee that blow--up cannot occur before
$\tau_\ast$.

\smallskip
\noindent
In summary, for $f=b\varphi_1$ the quantities \eqref{STL1}--\eqref{ST1} yield an
explicit random estimation interval $[\tau_\ast,\tau^\ast]$ for the blow--up time
$\tau_b$, with $\tau_\ast$ reflecting global $L^\infty$-semigroup control and
$\tau^\ast$ reflecting principal-eigenmode amplification.
\end{remark}
\subsection*{Two--sided estimates for the blow--up rate in the maximum norm}\label{subsec:blowup-rate}

We now quantify the growth of the solution in the maximum norm by combining
\emph{a priori upper envelope} on the guaranteed existence interval
$[0,\tau_\ast)$ (Theorem~\ref{t2}) with the \emph{Bernoulli lower growth estimate}
leading to the upper bound $\tau^\ast$ (Theorem~\ref{thm5.1}). 
\begin{theorem}[Lower and upper bounds on $\|v(t)\|_\infty$]\label{thm:two-sided-rate}
Let $v$ be the (pathwise) mild solution of \eqref{s1} with blow--up time $\tau_b$.

\smallskip
\noindent\textbf{(i) Upper envelope up to $\tau_\ast$.}
Assume the hypotheses of Theorem~\ref{t2}, and let $\tau_\ast$ be defined by
\eqref{STL1}. Then, for all $0\le t<\tau_\ast$,
\begin{equation}\label{eq:sup-upper-envelope}
\|v(t,\cdot)\|_\infty
\le
\big\|e^{\gamma t}S_\alpha(t)\big\|_{\infty\to\infty}\,\|f\|_\infty\,\mathscr{G}(t),
\end{equation}
where $\mathscr{G}$ is given by \eqref{eq:G-solution}. In particular,
the right-hand side diverges as $t\uparrow\tau_\ast$.

\smallskip
\noindent\textbf{(ii) Bernoulli lower growth up to $\tau^\ast$.}
Assume $p,q>1$ with $p>q$, let $f\ge b\varphi_1$ on $D$, and choose $b>1$ so that
\eqref{B1} holds. Set
\[
J(0):=\int_D f(x)\varphi_1(x)\,dx,
\qquad
c_0:=\frac{\delta}{2}\Bigg(\int_D \varphi_1(x)^{\frac{q}{q-p}}\,dx\Bigg)^{\frac{p-q}{p}},
\]
and let $\tau^\ast$ be defined by \eqref{ST1}. Then, for all $0\le t<\tau^\ast$,
\begin{equation}\label{eq:sup-lower-growth}
\|v(t,\cdot)\|_\infty
\ge
e^{(-\lambda_1+\gamma)t}
\Bigg[
J(0)^{1-q}
-(q-1)c_0\!\int_0^{t}
e^{(q-1)\sigma B^H(s)+(-\lambda_1+\gamma)(q-1)s}\,ds
\Bigg]^{-\frac{1}{q-1}},
\end{equation}
and the right-hand side diverges as $t\uparrow\tau^\ast$.

\smallskip
\noindent Consequently, $\tau_\ast\le \tau_b\le \tau^\ast$ and, on the
common interval $[0,\tau_\ast)$, $\|v(t)\|_\infty$ is trapped between the explicit
lower bound \eqref{eq:sup-lower-growth} and the explicit upper envelope
\eqref{eq:sup-upper-envelope}.
\end{theorem}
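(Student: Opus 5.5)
The plan is to read both bounds off the comparison arguments already carried out in the proofs of Theorem~\ref{t2} and Theorem~\ref{thm5.1}, converting pointwise or projected estimates into statements about $\|v(t,\cdot)\|_\infty$.

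\textbf{Part (i).} Work pathwise for fixed $\omega$. The proof of Theorem~\ref{t2} already gives the pointwise chain \eqref{inq1-new1}:
\[
0\le v(t,x)\le w(t,x)\le e^{\gamma t}S_\alpha(t)\|f\|_\infty\,\mathscr{G}(t),\qquad 0\le t<\tau_\ast .
\]
Here $S_\alpha(t)$ acts on the constant $\|f\|_\infty$. By positivity and the contractivity of $S_\alpha(t)$ on $L^\infty(D)$, we have $e^{\gamma t}S_\alpha(t)\|f\|_\infty\le \|e^{\gamma t}S_\alpha(t)\|_{\infty\to\infty}\|f\|_\infty$ pointwise. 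Taking the supremum over $x\in D$ then yields \eqref{eq:sup-upper-envelope}.

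Divergence of the right-hand side comes from the explicit formula \eqref{eq:G-solution}. The bracket is continuous and decreasing in $t$, and by the definition \eqref{STL1} it tends to $0^+$ as $t\uparrow\tau_\ast$, at least when $\tau_\ast<\infty$. Hence $\mathscr{G}(t)\to\infty$. The prefactor $\|e^{\gamma t}S_\alpha(t)\|_{\infty\to\infty}$ stays bounded away from zero on bounded intervals, since it is at least $e^{(\gamma-\lambda_1)t}\|\varphi_1\|_\infty^{-1}\sup\varphi_1>0$ by testing on $\varphi_1$.

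\textbf{Part (ii).} Reuse Steps 1--5 of the proof of Theorem~\ref{thm5.1}. These give $J(t)\ge I(t)$ on $[0,\tau^\ast)$, where $J(t)=\int_D v\varphi_1\,dx$ and $I$ is the explicit Bernoulli solution. It remains to pass from $J$ to the sup norm. Since $\varphi_1\ge0$ and $\int_D\varphi_1\,dx=1$ by \eqref{eq:first-eig-norm},
\[
J(t)\le \|v(t,\cdot)\|_\infty\int_D\varphi_1\,dx=\|v(t,\cdot)\|_\infty .
\]
This gives \eqref{eq:sup-lower-growth} with exactly the formula for $I(t)$, valid as long as $v$ exists. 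Divergence as $t\uparrow\tau^\ast$ follows because the bracket decreases to $0$, since the threshold in \eqref{ST1} equals $J(0)^{1-q}/((q-1)c_0)$.

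Two points need care here. First, if $\tau_b<\tau^\ast$ the inequality is only meaningful for $t<\tau_b$. I would state it for $t<\min(\tau_b,\tau^\ast)$ and note that $\tau_b\le\tau^\ast$ is exactly the content of Theorem~\ref{thm5.1}. Second, condition \eqref{B1} is imposed at a fixed random time $t$ through $B^H_\ast(t)$. Because $B^H_\ast$ is nondecreasing, fixing that time at or beyond the horizon of interest makes the Step~3 damping estimate hold on the whole interval. This uniformity in time is the main obstacle I expect, and I would handle it by choosing the reference time equal to $\tau^\ast$, or a truncation $T\wedge\tau_N$ as in Proposition~\ref{p2}, and then letting $N\to\infty$.

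\textbf{Conclusion.} Combining Theorem~\ref{t2} and Theorem~\ref{thm5.1} gives $\tau_\ast\le\tau_b\le\tau^\ast$. On $[0,\tau_\ast)$ both estimates hold simultaneously, which traps $\|v(t)\|_\infty$ between the two explicit curves.
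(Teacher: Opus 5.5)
Your proposal is essentially the paper's proof. Part (i) takes the supremum over $x$ in the pointwise envelope \eqref{inq1-new1} from Theorem~\ref{t2}. Part (ii) reuses the Bernoulli differential inequality for $J$ from the proof of Theorem~\ref{thm5.1}, compares $J$ with the explicit solution $I$, and uses $J(t)\le\|v(t,\cdot)\|_\infty$, which follows from $\varphi_1\ge0$ and $\int_D\varphi_1\,dx=1$.

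Your extra caveats are sensible refinements that the paper glosses over. These are restricting (ii) to $t<\tau_b$, and imposing \eqref{B1} at a reference time at or beyond the horizon, e.g.\ at $\tau^\ast$ on $\{\tau^\ast<\infty\}$. Your alternative of localising at $T\wedge\tau_N$ and letting $N\to\infty$ would not work with $b$ fixed. With $B^H_\ast\le N$ substituted, \eqref{B1} becomes more restrictive as $N$ grows.
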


\begin{proof}
\textbf{Step 1: proof of \eqref{eq:sup-upper-envelope}.}
From Theorem~\ref{t2} (inequality \eqref{inq1-new1}) we have, for $0\le t<\tau_\ast$,
\[
0\le v(t,x)\le e^{\gamma t}\,(S_\alpha(t)\mathbf{1})(x)\,\|f\|_\infty\,\mathscr{G}(t),
\qquad x\in D.
\]
Taking the supremum over $x\in D$ yields \eqref{eq:sup-upper-envelope}. The
blow--up of the right-hand side as $t\uparrow\tau_\ast$ follows from the explicit
formula \eqref{eq:G-solution}, since $\tau_\ast$ is defined as the hitting time
of the critical level of the integral in the denominator.

\smallskip
\textbf{Step 2: proof of \eqref{eq:sup-lower-growth}.}
As in the proof of Theorem~\ref{thm5.1}, the functional $J$ satisfies the scalar
differential inequality
\[
J'(t)\ge (-\lambda_1+\gamma)J(t)+c_0\,e^{(q-1)\sigma B^H(t)}J(t)^q .
\]
Let $I$ solve the associated Bernoulli ODE with $I(0)=J(0)$. By comparison for
scalar differential inequalities (see, e.g., \cite[Theorem~1.3]{teschl}),
$J(t)\ge I(t)$ on the common interval of existence, and solving the Bernoulli
equation gives
\[
I(t)=e^{(-\lambda_1+\gamma)t}
\Bigg[
J(0)^{1-q}
-(q-1)c_0\!\int_0^{t}
e^{(q-1)\sigma B^H(s)+(-\lambda_1+\gamma)(q-1)s}\,ds
\Bigg]^{-\frac{1}{q-1}}.
\]
Finally, since $\int_D\varphi_1(x)\,dx=1$ and $\varphi_1\ge0$,
\[
J(t)=\int_D v(t,x)\varphi_1(x)\,dx\le \|v(t,\cdot)\|_\infty,
\]
hence $\|v(t,\cdot)\|_\infty\ge J(t)\ge I(t)$, which is exactly
\eqref{eq:sup-lower-growth}. The divergence as $t\uparrow\tau^\ast$ is built into
the definition \eqref{ST1}.
\end{proof}

\begin{remark}[Reading the two bounds as ``rate information'']\label{rem:rate-reading}
The upper envelope \eqref{eq:sup-upper-envelope} is an \emph{a priori} control
valid up to the guaranteed time $\tau_\ast$ (hence it provides an explicit
upper-growth profile on $[0,\tau_\ast)$). The lower bound
\eqref{eq:sup-lower-growth} shows that, under the conditions of
Theorem~\ref{thm5.1}, the maximum norm must grow at least as fast as a Bernoulli
profile and cannot stay bounded up to $\tau^\ast$. Together, these estimates
yield a quantitative bracketing of the growth of $\|v(t)\|_\infty$ before blow--up.
\end{remark}

	\subsection*{Blow--up probability }\label{sec5.1}	
   \noindent
Next we quantify the \emph{probability of blow--up} for the solution of
\eqref{s1} (and therefore also for the original variable in \eqref{b1}).
A key step is to control tail probabilities for exponential functionals of the
driving noise. More precisely, we shall obtain an upper bound for
\begin{equation}\label{eq:exp-functional-prob}
\mathbb{P}\Bigg(
\int_{0}^{\infty} \exp\{-a s+\sigma X_{s}\}\,ds < x
\Bigg),
\end{equation}
where $x>0$, $\sigma>0$, $a\in\R$, and $(X_s)_{s\ge0}$ is a continuous
stochastic process (in particular, this framework covers the choice
$X_s=B_s^{H}$, $H\in(0,1)$).

\medskip
To this end, we impose the following standing assumption on $X_s$.

\begin{assumption}\label{as2}
Let $(X_t)_{t\ge0}$ be an $(\mathcal F_t)$-adapted, continuous process. We assume:
\begin{enumerate}[label=(A\arabic*)]
\item\label{as2:i}
The exponential moment functional is integrable:
\[
\int_{0}^{\infty} e^{-a s}\,\mathbb{E}\big[e^{\sigma X_s}\big]\,ds<\infty.
\]

\item\label{as2:ii}
For every $t\ge0$, the random variable $X_t$ is Malliavin differentiable,
namely $X_t\in\mathbb{D}^{1,2}$.

\item\label{as2:iii}
There exist a function $f:\R_+\to\R_+$ with $\lim_{t\to\infty}f(t)=\infty$ and,
for each $x>0$, a finite constant $M_x$ (possibly depending on $x$) such that,
almost surely,
\[
\sup_{t\ge0}
\frac{\displaystyle \sup_{s\in[0,t]}\int_0^{s} |D_\theta X_s|^2\,d\theta}
{\big(\ln(x+1)+f(t)\big)^2}
\ \le\ M_x \ <\ \infty ,
\]
\end{enumerate}
\end{assumption}
where $D_{\theta}X_s$ stands for the Malliavin derivative of the stochastic process $X_s,$ cf. Section \ref{sec3}.
\begin{theorem}\cite[Theorem 3.1]{dung}\label{mt6} 
	Suppose that Assumption  \ref{as2} holds. We have 
	\begin{align}\label{m7}
		\mathbb{P} \left[ \int_{0}^{\infty} \exp\{-as+\sigma X_{s} \} ds < x \right] \leq \exp \left\lbrace -\frac{(m_{x} -1)^{2}}{2 \sigma^{2}M_{x}}\right\rbrace,
	\end{align}
	where 
	\begin{align*}
		m_{x}=\mathbb{E}\left[\sup_{t \geq 0} \frac{\ln \left(  \int_{0}^{t} \exp\{-as+\sigma X_{s} \} ds+1 \right) +f(t)}{\ln \left( x+1\right)+f(t)}\right]\geq 1.
	\end{align*}
\end{theorem}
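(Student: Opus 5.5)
The plan is to turn the small-ball event for the exponential functional into a large-deviation event for a centred functional of $X$. The Gaussian concentration estimate of Lemma~\ref{dung2} then does the work. Throughout write $A_t:=\int_0^t e^{-as+\sigma X_s}\,ds$; $A_t$ is nondecreasing and, by \ref{as2:i} together with monotone convergence, $A_\infty<\infty$ a.s. Introduce
\[
Z:=\sup_{t\ge0}\frac{\ln(A_t+1)+f(t)}{\ln(x+1)+f(t)},
\]
so that $m_x=\mathbb{E}[Z]$. Each ratio equals $1$ at $t=0$, hence $Z\ge1$ and $m_x\ge1$. Since $A_t\le A_\infty$ and the map $y\mapsto(\ln(y+1)+f)/(\ln(x+1)+f)$ is increasing, on the event $\{A_\infty<x\}$ every ratio is $\le1$, so $Z\le 1$ and therefore $Z=1$. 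This gives
\[
\mathbb{P}(A_\infty<x)\le \mathbb{P}(Z\le 1)=\mathbb{P}\big(Z-m_x\le -(m_x-1)\big)\le \mathbb{P}\big(|Z-m_x|\ge m_x-1\big).
\]
If $m_x=1$ the claimed bound is trivial, so I may assume $m_x>1$.

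The next step applies Lemma~\ref{dung2} to the centred variable $Z-m_x$. For this I need $Z\in\mathbb{D}^{1,2}$ and a deterministic bound on $\int_0^\infty (D_\theta Z)^2\,d\theta$. Fix $t$ and set $F_t:=(\ln(A_t+1)+f(t))/(\ln(x+1)+f(t))$. By the chain rule and \ref{as2:ii},
\[
D_\theta F_t=\frac{\sigma}{\ln(x+1)+f(t)}\int_0^t \frac{e^{-as+\sigma X_s}}{A_t+1}\,D_\theta X_s\,ds .
\]
The weights $e^{-as+\sigma X_s}/(A_t+1)$ are nonnegative and have total mass $A_t/(A_t+1)\le1$ on $[0,t]$. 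Jensen or Cauchy–Schwarz for this subprobability measure, followed by exchanging the order of integration, gives
\[
\int (D_\theta F_t)^2 d\theta\le \frac{\sigma^2}{(\ln(x+1)+f(t))^2}\sup_{s\le t}\int_0^s |D_\theta X_s|^2d\theta\le \sigma^2 M_x ,
\]
uniformly in $t$ by \ref{as2:iii}; here I use that $X_s$ depends only on noise up to time $s$, so $D_\theta X_s=0$ for $\theta>s$.

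To pass from the individual $F_t$ to their supremum $Z$, I will use the standard fact (Nualart, Prop.~2.1.10) that the supremum of a continuous family of $\mathbb{D}^{1,2}$ variables with uniformly bounded derivatives belongs to $\mathbb{D}^{1,2}$. Moreover $\|DZ\|$ is bounded by the same constant, because $DZ=DF_{\tau}$ at a maximising time $\tau$. The hypotheses here are continuity of $t\mapsto F_t$, which holds since $X$ and $f$ are continuous, and $\mathbb{E}[Z^2]<\infty$. I expect this step to be the main obstacle: the index set is noncompact, so the supremum over $[0,\infty)$ may not be attained. I would handle it by first working with $Z_T=\sup_{t\le T}F_t$ and then letting $T\to\infty$, using that the bounded derivatives give weak compactness in $\mathbb{D}^{1,2}$ and that $Z_T\uparrow Z$. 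Integrability of $Z$ also comes from this limiting argument, together with the Gaussian tail of $Z_T-\mathbb{E}Z_T$, which is uniform in $T$.

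With $M_0^2=\sigma^2M_x$, Lemma~\ref{dung2} yields a two-sided tail bound carrying a factor $2$. To match \eqref{m7} I will instead use its one-sided version: the same Malliavin/Herbst argument that proves Lemma~\ref{dung2} gives $\mathbb{P}(Z-m_x\le -y)\le e^{-y^2/(2M_0^2)}$ for each tail separately. Taking $y=m_x-1$ then gives
\[
\mathbb{P}(A_\infty<x)\le \exp\Big\{-\frac{(m_x-1)^2}{2\sigma^2M_x}\Big\},
\]
which is \eqref{m7}.
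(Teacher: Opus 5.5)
The paper gives no proof of its own here; it quotes \cite[Theorem 3.1]{dung}. Your outline follows the argument behind that result:
\begin{itemize}
\item the normalised supremum $Z$;
\item the inclusion $\{A_\infty<x\}\subset\{Z\le 1\}$;
\item the bound $\int_0^\infty (D_\theta F_t)^2\,d\theta\le\sigma^2M_x$, obtained by Cauchy--Schwarz against the sub-probability weights together with \ref{as2:iii};
\item a one-sided, factor-free Gaussian concentration inequality.
\end{itemize}

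There is, however, a false step that the statement depends on: the ratio is not $1$ at $t=0$. Since $A_0=0$, it equals $f(0)/(\ln(x+1)+f(0))<1$. It is $0$ for $f(t)=t^{\alpha_1}$, the choice used in Theorem~\ref{thm5.2}. So your proof of $Z\ge 1$, and hence of $m_x\ge 1$, fails.

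This is not cosmetic. $m_x\ge1$ is part of the claim, and it is what makes $y=m_x-1\ge 0$ a legitimate lower-tail level. For $m_x<1$, concentration says nothing about $\mathbb{P}(Z\le 1)$.

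The missing ingredient is the hypothesis $\lim_{t\to\infty}f(t)=\infty$, which you never use. By \ref{as2:i} and Tonelli, $\mathbb{E}A_\infty<\infty$, so $A_\infty<\infty$ a.s. Then $\ln(A_t+1)\le \ln(A_\infty+1)$ stays bounded while $f(t)\to\infty$. Hence $F_t\to 1$, and $Z\ge\limsup_{t\to\infty}F_t=1$ a.s.

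The same observation affects your derivative bound. On the event of interest $\{A_\infty<x\}$ one has $F_t<1$ for every $t$, so the supremum $Z=1$ is not attained. The step ``$DZ=DF_\tau$ at a maximising time'' is therefore unavailable; that identification needs an a.s.\ unique, attained maximiser in any case.

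Obtain $\|DZ_T\|\le \sup_{t\le T}\|DF_t\|\le \sigma\sqrt{M_x}$ instead by approximating $Z_T$ with maxima over finite sets (the maximum is $1$-Lipschitz) and using closability. Then pass $T\to\infty$ as you propose. The a.s.\ bound survives weak limits because $\{G:\|G\|\le K \text{ a.s.}\}$ is closed and convex.

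Integrability is more direct than your limiting argument. The bounds $Z\le 1+\ln(1+A_\infty)/\ln(1+x)$ and $\ln(1+y)^2\le C(1+y)$ give $\mathbb{E}Z^2<\infty$ from \ref{as2:i}.

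Finally, continuity of $t\mapsto F_t$ needs $f$ continuous. Assumption~\ref{as2} does not state this, though it holds for $f(t)=t^{\alpha_1}$.
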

\noindent
The next result provides a quantitative estimate on the \emph{probability of finite--time
blow--up} for \eqref{b1} in the fractional Brownian regime $H\in(\tfrac12,1)$.
It is formulated in terms of the random upper bound $\tau^\ast$ derived in
Theorem~\ref{thm5.1}, and yields an explicit lower bound for the blow--up
probability.

\begin{theorem}\label{thm5.2}
Assume  $\alpha_1>H$. Suppose $q>p>1$ and fix initial
data $f\ge0$ satisfying $f(x)\ge b\,\varphi_1(x)$ for some $b>1$, where
$\varphi_1$ denotes the principal Dirichlet eigenfunction of $-\Delta_\alpha$ in
$D$. Assume also that $b$ is chosen so that \eqref{B1} holds and let $\tau$ be
the blow--up (maximal existence) time of \eqref{b1}. Then:

\begin{enumerate}[label=(\roman*)]
\item\label{it:lambda-less-gamma}
If $\lambda_1<\gamma$, then $\mathbb{P}(\tau^\ast=\infty)=0$ for any
nontrivial nonnegative solution of \eqref{b1}. In particular,
\[
\mathbb{P}(\tau_{b}<\infty)=1,
\]
that is, blow--up occurs in finite--time almost surely.

\item\label{it:lambda-greater-gamma}
If $\lambda_1>\gamma$, then the probability of blow--up admits the lower bound
\begin{align}\label{eq:blowup-prob-lower}
&\mathbb{P}( \tau_{b}<\infty)
\ge 1\nonumber\\
&-\exp\Bigg\{
-\frac{1}{2\rho^{2}}
\Bigg(\log\Bigg(
\frac{2J(0)^{1-q}}
{\delta(q-1)\Big(\int_D \varphi_1(x)^{\frac{q}{q-p}}\,dx\Big)^{\frac{p-q}{p}}}
+1\Bigg)\Bigg)^{\frac{2H}{\alpha_1}-2}
\Bigg(\frac{\alpha_1-H}{\alpha_1}\Bigg)^{2-\frac{2H}{\alpha_1}}
\big(N(H)-1\big)^2
\Bigg\},
\end{align}
where $\rho:=\sigma (q-1)$,
\[
J(0):=\int_D f(x)\varphi_1(x)\,dx,
\]
and $N(H)$ is defined by
\begin{align}\label{516}
N(H)
:=\mathbb{E}\Bigg[
\sup_{t\ge0}
\frac{
\ln\!\Big(1+\int_0^t
\exp\!\big\{(-\lambda_1+\gamma)(q-1)s-\tfrac{\rho^2}{2}s^{2H}+\rho B^H(s)\big\}\,ds
\Big)
+t^{\alpha_1}
}{
\ln\!\Big(
\frac{2J(0)^{1-q}}
{\delta(q-1)\Big(\int_D \varphi_1(x)^{\frac{q}{q-p}}\,dx\Big)^{\frac{p-q}{p}}}
+1\Big)
+t^{\alpha_1}
}
\Bigg].
\end{align}
\end{enumerate}
\end{theorem}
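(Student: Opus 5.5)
The plan is to reduce the blow--up probability to the finiteness of the random upper bound $\tau^\ast$ from Theorem~\ref{thm5.1}. Since $\tau_b\le\tau^\ast$ pathwise, we have $\mathbb{P}(\tau_b<\infty)\ge\mathbb{P}(\tau^\ast<\infty)$. By \eqref{ST1}, with $\rho=\sigma(q-1)$ and
\[
K:=\frac{2J(0)^{1-q}}{\delta(q-1)\big(\int_D\varphi_1^{\frac{q}{q-p}}dx\big)^{\frac{p-q}{p}}},
\]
the event $\{\tau^\ast=\infty\}$ coincides with $\{\int_0^\infty e^{\rho B^H(s)+(\gamma-\lambda_1)(q-1)s}\,ds< K\}$. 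Indeed, the integrand is positive and continuous, so the integral is strictly increasing and the hitting time is finite iff the total integral reaches $K$. It therefore suffices to bound $\mathbb{P}\big(\int_0^\infty e^{\rho B^H(s)+(\gamma-\lambda_1)(q-1)s}ds<K\big)$ from above.

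\emph{Case (i), $\lambda_1<\gamma$.} Here the exponent is $\rho B^H(s)-\nu s$ with $\nu=(\lambda_1-\gamma)(q-1)<0$. Lemma~\ref{l2}(ii), applied to the fBm $\rho B^H$ (equivalently after rescaling time, since $\rho B^H(s)$ equals in law $B^H(\rho^{1/H}s)$), gives that the integral is $+\infty$ almost surely. Hence $\mathbb{P}(\tau^\ast=\infty)=0$ and $\mathbb{P}(\tau_b<\infty)=1$.

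\emph{Case (ii), $\lambda_1>\gamma$.} I will apply Theorem~\ref{mt6} with $a=(\lambda_1-\gamma)(q-1)>0$, $\sigma$ replaced by $\rho$, and $x=K$. Since the exponential moment $\mathbb{E}e^{\rho B^H(s)}=e^{\rho^2 s^{2H}/2}$ grows superlinearly in the exponent when $H>\tfrac12$, (A1) fails for $X=B^H$. I therefore take the compensated process $X_s:=B^H(s)-\tfrac{\rho}{2}s^{2H}$, so that $\mathbb{E}e^{\rho X_s}=1$ and (A1) holds because $a>0$. The compensator only lowers the integral: $e^{\rho X_s}\le e^{\rho B^H(s)}$, so
\[
\mathbb{P}\Big(\int_0^\infty e^{-as+\rho B^H(s)}ds<K\Big)\le\mathbb{P}\Big(\int_0^\infty e^{-as+\rho X_s}ds<K\Big).
\]
This explains the term $-\tfrac{\rho^2}{2}s^{2H}$ in $N(H)$. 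For (A2), $X_s$ is a deterministic shift of $B^H(s)\in\mathbb{D}^{1,2}$.

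For (A3), I choose $f(t)=t^{\alpha_1}$. The Malliavin derivative is deterministic and equals $D_\theta X_s=D_\theta B^H(s)=K_H(s,\theta)$, with $\int_0^s|D_\theta X_s|^2d\theta$ controlled by $s^{2H}$ in the appropriate $\mathcal H$-norm. Thus I need
\[
M_K=\sup_{t\ge0}\frac{t^{2H}}{(L+t^{\alpha_1})^2},\qquad L:=\ln(K+1).
\]
Since $\alpha_1>H$, this supremum is finite. Maximising $g(t)=t^H/(L+t^{\alpha_1})$ gives the critical point $t^{\alpha_1}=\frac{HL}{\alpha_1-H}$, whence
\[
g_{\max}=\frac{\alpha_1-H}{\alpha_1}\Big(\frac{H}{\alpha_1-H}\Big)^{H/\alpha_1}L^{\frac{H}{\alpha_1}-1}.
\]
Up to the harmless constant factor $(H/(\alpha_1-H))^{2H/\alpha_1}$, which I expect is bounded by one or absorbed in the paper's normalisation, this yields
\[
\frac{1}{M_K}\ge L^{2-\frac{2H}{\alpha_1}}\Big(\frac{\alpha_1}{\alpha_1-H}\Big)^{2}\cdots
\]
This has to be matched to the exponent $L^{\frac{2H}{\alpha_1}-2}\big(\frac{\alpha_1-H}{\alpha_1}\big)^{2-\frac{2H}{\alpha_1}}$ displayed in \eqref{eq:blowup-prob-lower}; I will track the constants carefully against that form. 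Plugging into \eqref{m7} with $m_K=N(H)$, which is exactly \eqref{516}, gives
\[
\mathbb{P}(\tau^\ast=\infty)\le\exp\Big\{-\frac{(N(H)-1)^2}{2\rho^2M_K}\Big\},
\]
and \eqref{eq:blowup-prob-lower} follows from $\mathbb{P}(\tau_b<\infty)\ge 1-\mathbb{P}(\tau^\ast=\infty)$.

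The main obstacle is (A3): justifying the bound $\int_0^s|D_\theta B^H(s)|^2d\theta\lesssim s^{2H}$ in the sense required by Theorem~\ref{mt6}, and reconciling the exact constant with the stated exponent. A secondary point is that Theorem~\ref{thm5.1} was stated under $p>q$ whereas here $q>p$. I will reuse its proof, checking that \eqref{B1} still delivers the Bernoulli inequality \eqref{eq:J-ineq-final} with the same threshold $K$, so that $\tau_b\le\tau^\ast$ remains available.
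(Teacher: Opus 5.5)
Your route is the paper's route. You reduce to $\{\tau^\ast=\infty\}$ via $\tau_b\le\tau^\ast$. In (i) you show that the exponential functional diverges: you use Lemma~\ref{l2}(ii) plus self-similarity, the paper uses the law of the iterated logarithm, and both work. In (ii) you pass to $X_s=B^H(s)-\frac{\rho}{2}s^{2H}$ and apply Theorem~\ref{mt6} with $a=(\lambda_1-\gamma)(q-1)$, $f(t)=t^{\alpha_1}$, $\int_0^sK_H(s,\theta)^2\,d\theta=s^{2H}$ and $m_K=N(H)$.

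Your $p$ versus $q$ worry resolves in your favour. The proof of Theorem~\ref{thm5.1} uses H\"older with exponents $q/p$ and $q/(q-p)$ and needs $b^{q-p}$ large, so it actually requires $q>p$; the hypothesis $p>q$ in its statement appears to be a misprint. When you redo that check, note that \eqref{B1} controls $B^H$ only through $B^H_\ast(t)$ at one fixed $t$. Since $B^H_\ast(t)\to\infty$ a.s., no fixed $b$ works for all times, so the comparison $v\ge b\varphi_1$ and the bracket estimate are justified only on $[0,t]$. The paper nonetheless uses $\tau_b\le\tau^\ast$ on all of $\{\tau^\ast<\infty\}$, and your argument inherits that reliance.

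The constant you leave open cannot be matched to \eqref{eq:blowup-prob-lower}. The mismatch lies in the stated bound, not in your computation. Any upper bound on the supremum is admissible in (A3), so you do not need $(H/(\alpha_1-H))^{2H/\alpha_1}\le 1$, which is false when $\alpha_1<2H$. Bounding the numerator $H$ by $\alpha_1$ in that factor gives exactly the paper's $M_H=\big(\frac{\alpha_1-H}{\alpha_1 L}\big)^{2-\frac{2H}{\alpha_1}}$ with $L=\ln(K+1)$. Since \eqref{m7} has $M_x$ in the denominator, what the argument (yours and the paper's) actually yields is
\[
\mathbb{P}(\tau^\ast=\infty)\le\exp\Big\{-\frac{(N(H)-1)^2}{2\rho^2}\Big(\frac{\alpha_1\ln(K+1)}{\alpha_1-H}\Big)^{2-\frac{2H}{\alpha_1}}\Big\}.
\]
The paper's final display instead puts $M_H$ where $1/M_H$ belongs. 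Your bound implies \eqref{eq:blowup-prob-lower} only when $M_H\le 1$, i.e.\ $\ln(K+1)\ge 1-H/\alpha_1$. For smaller $K$ the stated estimate is stronger than what Theorem~\ref{mt6} delivers. So conclude with the inequality above, or with your exact and slightly sharper $M_K$, rather than trying to reproduce the displayed exponent.
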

\begin{proof}
We prove the two assertions separately.

\begin{enumerate}[label=(\roman*)]
\item \emph{Case $\lambda_1<\gamma$.}
Recall the law of the iterated logarithm for fractional Brownian motion
(see, e.g., \cite{law}): almost surely,
\[
\liminf_{t\to\infty}\frac{B^{H}(t)}{t^{H}\sqrt{2\log\log t}}=-1,
\qquad
\limsup_{t\to\infty}\frac{B^{H}(t)}{t^{H}\sqrt{2\log\log t}}=1.
\]
In particular, $B^H(t)$ attains arbitrarily large positive values along a
sequence $t_n\to\infty$.
Since $\gamma-\lambda_1>0$, the deterministic factor
$e^{(-\lambda_1+\gamma)(q-1)s}$ grows exponentially in $s$, and therefore the
integral appearing in the definition of $\tau^\ast$ (see \eqref{ST1}) diverges
almost surely as $t\to\infty$. Consequently,
\[
\mathbb{P}(\tau^\ast=\infty)=0.
\]
By Theorem~\ref{thm5.1}, the maximal existence time $\tau_{b}$ satisfies
$ \tau_{b} \le\tau^\ast$, hence $\mathbb{P}(\tau_{b}<\infty)=1$, i.e.\ blow--up occurs in
finite--time almost surely for any nontrivial nonnegative solution.

\item \emph{Case $\lambda_1>\gamma$.}
By the definition of $\tau^\ast$ and the fact that $\rho>0$, we can write
\begin{align*}
\mathbb{P}(\tau^\ast=\infty)
&=\mathbb{P}\Bigg(
\int_{0}^{\infty} e^{\rho B^{H}(s)+(-\lambda_1+\gamma)(q-1)s}\,ds
<
\frac{2J(0)^{1-q}}
{\delta(q-1)\Big(\int_D\varphi_1(x)^{\frac{q}{q-p}}\,dx\Big)^{\frac{p-q}{p}}}
\Bigg)\\
&\le
\mathbb{P}\Bigg(
\int_{0}^{\infty} e^{\rho B^{H}(s)-\frac{\rho^2}{2}s^{2H}+(-\lambda_1+\gamma)(q-1)s}\,ds
<
\frac{2J(0)^{1-q}}
{\delta(q-1)\Big(\int_D\varphi_1(x)^{\frac{q}{q-p}}\,dx\Big)^{\frac{p-q}{p}}}
\Bigg),
\end{align*}
where the last step uses the standard exponential normalization by
$e^{-\frac{\rho^2}{2}s^{2H}}$.

Set
\[
X_t:=-\frac{\rho}{2}\,t^{2H}+B^H(t),
\quad \rho:=k(q-1),
\quad a:=(\lambda_1-\gamma)(q-1),
\quad x:=\frac{2J(0)^{1-q}}
{\delta(q-1)\Big(\int_D\varphi_1^{\frac{q}{q-p}}(x)dx\Big)^{\frac{p-q}{p}}}.
\]
Then the preceding probability is of the form \eqref{eq:exp-functional-prob}:
\[
\mathbb{P}\Big(\int_0^\infty e^{-as+\rho X_s}\,ds<x\Big).
\]
We now verify that $X_t$ satisfies Assumption~\ref{as2} with the choice
$f(t)=t^{\alpha_1}$, $\alpha_1>H$.

\smallskip\noindent
\emph{(A1).} Using $\Var(B^H(s))=s^{2H}$ and
$\mathbb{E}e^{\rho B^H(s)}=\exp\{\tfrac12\rho^2 s^{2H}\}$, we obtain
\begin{align*}
\int_0^\infty e^{-as}\,\mathbb{E}\big[e^{\rho X_s}\big]\,ds
&=
\int_0^\infty e^{-as}\,
\mathbb{E}\Big[\exp\Big\{\rho B^H(s)-\frac{\rho^2}{2}s^{2H}\Big\}\Big]\,ds\\
&=
\int_0^\infty e^{-as}\,ds
=\frac{1}{a}
<\infty,
\end{align*}
since $a=(\lambda_1-\gamma)(q-1)>0$.

\smallskip\noindent
\emph{(A2).} Since $B^H(t)\in\mathbb{D}^{1,2}$ for each $t\ge0$, and $t^{2H}$
is deterministic, it follows that $X_t\in\mathbb{D}^{1,2}$.

\smallskip\noindent
\emph{(A3).} The Malliavin derivative satisfies
$D_\theta X_s=D_\theta B^H(s)=K_H(s,\theta)$ for $\theta\le s$. Hence
\[
\int_0^s |D_\theta X_s|^2\,d\theta=\int_0^s K_H(s,\theta)^2\,d\theta
=\Var(B^H(s))=s^{2H},
\]
and therefore
\begin{equation}\label{eq:DX-bound}
\sup_{s\in[0,t]}\int_0^s |D_\theta X_s|^2\,d\theta
=\sup_{s\in[0,t]} s^{2H}=t^{2H}.
\end{equation}
With $f(t)=t^{\alpha_1}$ and $\alpha_1>H$, the ratio in Assumption~\ref{as2}(A3)
is almost surely bounded:
\[
\sup_{t\ge0}\frac{t^{2H}}{\big(\ln(x+1)+t^{\alpha_1}\big)^2}
=:M_H<\infty,
\]
and a direct optimisation yields
\[
M_H
=
\Big(\frac{\alpha_1-H}{\alpha_1}\Big)^{2-\frac{2H}{\alpha_1}}
\Big(\ln(x+1)\Big)^{\frac{2H}{\alpha_1}-2}.
\]

\smallskip
Having verified Assumption~\ref{as2}, we can apply Theorem~\ref{mt6} to obtain
\begin{align*}
\mathbb{P}(\tau^\ast=\infty)
&\le
\exp\Bigg\{
-\frac{1}{2\rho^2}
\Big(\ln(x+1)\Big)^{\frac{2H}{\alpha_1}-2}
\Big(\frac{\alpha_1-H}{\alpha_1}\Big)^{2-\frac{2H}{\alpha_1}}
\big(N(H)-1\big)^2
\Bigg\}.
\end{align*}
Finally, since $\{\tau_{b}=\infty\}\subseteq\{\tau^\ast=\infty\}$ (because
$ \tau_{b}\le\tau^\ast$), we have
\[
\mathbb{P}( \tau_{b}<\infty)
=1-\mathbb{P}( \tau_{b}=\infty)
\ge 1-\mathbb{P}(\tau^\ast=\infty),
\]
and substituting the previous bound yields exactly \eqref{eq:blowup-prob-lower}.
This completes the proof.
\end{enumerate}
\end{proof}

	\section{The special case $\frac{3}{4}<H<1$}\label{sec6}
In this section we focus on the case $\tfrac34<H<1$. In this parameter range,
the Gaussian measures on $C([0,T])$ induced by the fractional Brownian motion
$B^H$ and by a standard Brownian motion $W$ are \emph{equivalent} (mutually
absolutely continuous); see \cite[Theorem~1.7]{cher2001}. Consequently, for
pathwise questions and blow--up/existence criteria that are stable under
equivalent changes of measure, it is convenient to work on a probability space
carrying a Brownian motion $W$ and to view the fractional-noise model through a
Brownian representation.

In particular, introducing the stochastic exponential transformation
\[
v(t,x):=\exp\{-\sigma W(t)\}\,u(t,x),\qquad t\ge0,\ x\in D,
\]
eliminates the multiplicative Brownian term and converts \eqref{b1} into the
following random (pathwise) parabolic problem with time-dependent coefficients:
\begin{equation}\label{ss1}
\left\{
\begin{aligned}
v_{t}(t,x)-\Delta_{\alpha}v(t,x)
&=\left(\gamma-\frac{\sigma^{2}}{2}\right) v(t,x)
+\delta\, e^{(q-1)\sigma W(t)} \int_{D} v^{q}(t,y)\,dy
-\beta\, e^{(p-1)\sigma W(t)} v^{p}(t,x),\\
v(t,x)&=0,\qquad t>0,\ x\in \mathbb{R}^{d}\setminus D,\\
v(0,x)&=f(x),\qquad x\in D.
\end{aligned}
\right.
\end{equation}
Here, the correction term $-\frac{\sigma^2}{2}v$ is the usual It\^o contribution
arising from the product rule for $e^{-\sigma W(t)}u(t,\cdot)$.

	Let
\[
\vartheta:=\frac{\sigma^{2}}{2}-\gamma,
\]
so that the linear term in \eqref{ss1} can be written as
\[
\left(\gamma-\frac{\sigma^{2}}{2}\right)v=-\vartheta\,v.
\]
With this notation, all arguments developed in the previous sections apply to
\eqref{ss1} after the simple replacement $\gamma\mapsto -\vartheta$. In
particular, the same comparison principles and Osgood-type criteria yield
analogous existence and blow--up regimes, and the qualitative finite--time blow--up
statements remain the same as in the fractional setting $H\in(\tfrac12,1)$.

The next theorem formulates explicit lower bounds for the finite--time blow--up
of solutions to the Brownian-transformed random PDE \eqref{ss1}.

	\begin{theorem}\label{st2}
Assume that $p,q>0$ and let $f\in L^\infty(D)$ satisfy $f\ge0$. Define the
stopping time
\begin{equation}\label{eq:tau-starstar}
\tau_{**}
:= \inf\Bigg\{ t\ge 0 :
\int_{0}^{t} \exp\!\big\{(q-1)\sigma W(r)\big\}\,
\big\|e^{-\vartheta r}S_{\alpha}(r)\big\|_{\infty}^{\,q-1}\,dr
\ge
\frac{1}{\delta |D|\,(q-1)\,\|f\|_{\infty}^{\,q-1}}
\Bigg\}.
\end{equation}
Let $\tau_{b}$ be the
(maximal existence) blow--up time of the solution $v$ to \eqref{ss1}. Then
\[
\tau_{**}\le \tau_{b}.
\]
\end{theorem}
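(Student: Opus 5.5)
The plan is to repeat the proof of Theorem~\ref{t2} pathwise for \eqref{ss1}, with the linear coefficient $\gamma$ replaced by $-\vartheta$ and $B^H$ replaced by $W$. Fix $\omega$ and use the mild formulation of \eqref{ss1}:
\[
v(t,x)=e^{-\vartheta t}S_\alpha(t)f(x)+\int_0^t e^{-\vartheta(t-r)}S_\alpha(t-r)\Big[\delta e^{(q-1)\sigma W(r)}\int_D v^q(r,y)\,dy-\beta e^{(p-1)\sigma W(r)}v^p(r,x)\Big]dr .
\]
As in the discussion after Lemma~\ref{l1}, with $W$ in place of $B^H$ and the stopping times $\tau_N$ defined through $W$, we have $v\ge0$. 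The damping term $-\beta e^{(p-1)\sigma W}v^p$ is nonpositive, so it can be dropped. Let $w$ denote the mild solution of the analogue of \eqref{nre3} with $\gamma$ replaced by $-\vartheta$ and $B^H$ by $W$. Then $w$ is a viscosity supersolution of \eqref{ss1}, using Remark~\ref{mv} to pass from mild to viscosity solutions. Proposition~\ref{p2}, transcribed to the Brownian coefficients, then gives $0\le v\le w$ for as long as $w$ exists.

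Next I bound $w$ by a Bernoulli envelope. Define $\mathscr{G}_W$ by
\[
\mathscr{G}_W'(t)=\delta|D|\,e^{(q-1)\sigma W(t)}\big\|e^{-\vartheta t}S_\alpha(t)\big\|_\infty^{q-1}\|f\|_\infty^{q-1}\mathscr{G}_W(t)^q,\qquad \mathscr{G}_W(0)=1 .
\]
Its solution is
\[
\mathscr{G}_W(t)=\Big[1-(q-1)\delta|D|\,\|f\|_\infty^{q-1}\int_0^t e^{(q-1)\sigma W(r)}\big\|e^{-\vartheta r}S_\alpha(r)\big\|_\infty^{q-1}dr\Big]^{-1/(q-1)},
\]
which is finite exactly on $[0,\tau_{**})$. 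On the ordered set of continuous $v$ with $0\le v\le e^{-\vartheta t}S_\alpha(t)\|f\|_\infty\,\mathscr{G}_W(t)$, I show the Duhamel operator maps the set into itself. This uses positivity of $S_\alpha$, the semigroup property $S_\alpha(t-r)S_\alpha(r)=S_\alpha(t)$, the bound $\int_D v^q\le |D|\,\|e^{-\vartheta r}S_\alpha(r)\|_\infty^q\|f\|_\infty^q\mathscr{G}_W^q$, and the integrated Bernoulli identity $1+\int_0^t\mathscr{G}_W'=\mathscr{G}_W(t)$. The monotone Picard iterates starting from $e^{-\vartheta t}S_\alpha(t)f$ then increase to the unique mild solution $w$ and stay under the envelope. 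Hence $\|v(t)\|_\infty$ is finite for every $t<\tau_{**}$.

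By the continuation criterion for the pathwise problem, $v$ therefore cannot blow up before $\tau_{**}$, so $\tau_{**}\le\tau_b$. The main point needing care is the positivity and comparison step for \eqref{ss1}. Here the Itô correction $-\sigma^2/2$ only modifies the bounded coefficient $c_1$ in Lemma~\ref{l1}, and the factors $e^{(q-1)\sigma W}$ and $e^{(p-1)\sigma W}$ are bounded on $[0,T\wedge\tau_N]$, so Lemma~\ref{l1} and Proposition~\ref{p2} still apply verbatim. One should also observe that for $q\le1$ the envelope argument degenerates, so the real content of the statement is the case $q>1$, which is implicit in \eqref{eq:tau-starstar}.
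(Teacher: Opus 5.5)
Your plan is the paper's own: its proof of Theorem~\ref{st2} simply reruns Theorem~\ref{t2} with $\gamma\mapsto-\vartheta$ and $B^H\mapsto W$. The problem is the step you treat as routine, namely that the Duhamel operator maps $\{0\le v\le e^{-\vartheta t}\|f\|_\infty\,\mathscr{G}_W(t)\,S_\alpha(t)\mathbf{1}\}$ into itself. This does not follow from positivity and the semigroup property. The same flaw is in the proof of Theorem~\ref{t2}, so appealing to that proof does not repair it.

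Because the nonlocal source is constant in $x$, the nonlinear part of $\mathcal{L}v$ is bounded by
\[
\delta|D|\,\|f\|_\infty^{q}\int_0^t e^{(q-1)\sigma W(r)}\,\mathscr{G}_W(r)^q\,\big\|e^{-\vartheta r}S_\alpha(r)\big\|_\infty^{q}\,e^{-\vartheta(t-r)}\big(S_\alpha(t-r)\mathbf{1}\big)(x)\,dr .
\]
To dominate this by $e^{-\vartheta t}\|f\|_\infty\big(\mathscr{G}_W(t)-1\big)\big(S_\alpha(t)\mathbf{1}\big)(x)$ you need $\|S_\alpha(r)\|_{\infty\to\infty}\,S_\alpha(t-r)\mathbf{1}\le S_\alpha(t)\mathbf{1}$. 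Positivity and the semigroup property give the opposite inequality:
\[
S_\alpha(t)\mathbf{1}=S_\alpha(t-r)\big(S_\alpha(r)\mathbf{1}\big)\le\|S_\alpha(r)\|_{\infty\to\infty}\,S_\alpha(t-r)\mathbf{1}.
\]
Equality holds only when $S_\alpha(r)\mathbf{1}$ is constant. That is true on $\mathbb{R}^d$, but not for the killed semigroup, where $S_\alpha(r)\mathbf{1}(x)=\mathbb{P}_x(\tau_D>r)$ degenerates at $\partial D$. Concretely, mass injected at times $r$ close to $t$ has only been damped by $S_\alpha(t-r)\mathbf{1}\approx\mathbf{1}$, and nothing makes the envelope $S_\alpha(t)\mathbf{1}$ dominate it near the boundary.

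A spatially constant envelope $e^{-\vartheta t}\|S_\alpha(t)\|_{\infty\to\infty}\|f\|_\infty\mathscr{G}_W(t)$ does not close either. It requires $\|S_\alpha(t-r)\|\,\|S_\alpha(r)\|\le\|S_\alpha(t)\|$, which is the wrong direction of submultiplicativity. This genuinely fails here, since $e^{\lambda_1 t}\|S_\alpha(t)\|_{\infty\to\infty}$ tends to a constant strictly larger than $1$. In the local case of \cite{doz2010} the argument works because $S(t-r)\big[(S(r)f)^q\big]\le\|S(r)f\|_\infty^{q-1}S(t)f$ pointwise; there is no analogue of this for $\int_D v^q$.

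What your argument does give is a weaker bound, using only $\|S_\alpha(s)\|_{\infty\to\infty}\le 1$. Let $\widetilde{\mathscr{G}}'=\delta|D|\,e^{(q-1)\sigma W(t)-(q-1)\vartheta t}\|f\|_\infty^{q-1}\widetilde{\mathscr{G}}^{\,q}$ with $\widetilde{\mathscr{G}}(0)=1$, and set $\Phi(t):=e^{-\vartheta t}\|f\|_\infty\widetilde{\mathscr{G}}(t)$.
\begin{itemize}
\item The set $\{0\le v\le\Phi(t)\}$ is then exactly invariant, because $e^{-\vartheta(t-r)}e^{-q\vartheta r}=e^{-\vartheta t}e^{-(q-1)\vartheta r}$ and $S_\alpha(t-r)\mathbf{1}\le 1$.
\item Equivalently, $\Phi(t)$, taken constant on all of $\mathbb{R}^d$, is a classical supersolution of \eqref{ss1}, to which the Brownian version of Proposition~\ref{p2} applies directly.
\item This yields $\tau_b\ge\widetilde\tau_{**}$, the first time $\int_0^t e^{(q-1)\sigma W(r)-(q-1)\vartheta r}\,dr$ reaches $1/\big(\delta|D|(q-1)\|f\|_\infty^{q-1}\big)$.
\end{itemize}
Since $\|e^{-\vartheta r}S_\alpha(r)\|_\infty\le e^{-\vartheta r}$, one has $\widetilde\tau_{**}\le\tau_{**}$, so this is weaker than the statement. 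Any bound $\|S_\alpha(s)\|_{\infty\to\infty}\le e^{-\lambda s}$ valid for all $s\ge0$ can be inserted the same way, because pure exponentials compose exactly. The merely submultiplicative factor $\|S_\alpha(r)\|_{\infty\to\infty}$, however, cannot be carried through this envelope argument. Proving $\tau_{**}\le\tau_b$ with $\tau_{**}$ as defined in \eqref{eq:tau-starstar} needs a different idea.
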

\noindent
The proof of Theorem~\ref{st2} is a direct consequence of
Theorem~\ref{t2}, applied to the transformed equation \eqref{ss1} after the
parameter substitution $\gamma\mapsto -\vartheta$ (equivalently,
$\gamma-\frac{\sigma^2}{2}=-\vartheta$) and with $B_t^H$ replaced by the Brownian
motion $W_t$.
	
\medskip
We recall that a random variable $X(\alpha_2,\beta_1)$ is said to follow a
\emph{Gamma distribution} with shape parameter $\alpha_2>0$ and scale parameter
$\beta_1>0$ if it admits the density $\widetilde f_{\alpha_2,\beta_1}$ (cf.\
\cite{li})
\[
\widetilde f_{\alpha_2,\beta_1}(x)
=
\begin{cases}
\dfrac{x^{\alpha_2-1}}{\beta_1^{\alpha_2}\Gamma(\alpha_2)}
\exp\!\Big\{-\dfrac{x}{\beta_1}\Big\}, & x\ge0,\\[0.8em]
0, & x<0.
\end{cases}
\]
The following classical identity in law for exponential functionals of Brownian
motion will be used to derive a lower bound on the blow--up probability of solutions to
\eqref{ss1}.

\begin{lemma}\label{ll1}
For any $\bar\alpha_1>0$,
\[
\int_{0}^{\infty} e^{2(W(t)-\bar\alpha_1 t)}\,dt
\ \stackrel{law}{=}\ \frac{1}{2\,X(\bar\alpha_1,1)},
\]
where $(W(t))_{t\ge0}$ is a standard one-dimensional Brownian motion and
$X(\bar\alpha_1,1)$ is a Gamma random variable with parameters
$(\bar\alpha_1,1)$.  See, e.g., \cite{yor2005,revuz1999,yor2001}.
\end{lemma}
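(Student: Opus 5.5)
We would prove Lemma~\ref{ll1} by the classical route to Dufresne's identity: a time-reversal argument followed by identifying a stationary law. Write $a:=\bar\alpha_1>0$ and $A_t:=\int_0^t e^{2(W(s)-as)}\,ds$.

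\emph{Step 1: $A_\infty$ exists.} Since $a>0$, the law of the iterated logarithm gives $W(s)-as\to-\infty$ linearly. Hence $A_t\uparrow A_\infty<\infty$ almost surely (cf.\ Lemma~\ref{l2}(i) with $H=\tfrac12$), and in particular $A_t\to A_\infty$ in law.

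\emph{Step 2: time reversal.} Fix $t>0$ and substitute $s=t-u$. Since $\widehat W(u):=W(t)-W(t-u)$, $u\in[0,t]$, is again a standard Brownian motion, we get
\[
A_t \ \stackrel{law}{=}\ Y_t:=e^{2(W(t)-at)}\int_0^t e^{-2(W(u)-au)}\,du .
\]
Apply It\^o's formula to $e^{2(W(t)-at)}$. Its differential is $e^{2(W-a t)}\big(2\,dW+(2-2a)\,dt\big)$, where the $+2\,dt$ is the It\^o correction. Combined with the product rule, this shows that $Y$ is the time-homogeneous diffusion
\[
dY_t=\big(1+(2-2a)Y_t\big)\,dt+2Y_t\,dW_t,\qquad Y_0=0,
\]
with generator $\mathcal{L}g(y)=2y^2g''(y)+\big(1+(2-2a)y\big)g'(y)$ on $(0,\infty)$. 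By Step~1, $Y_t$ converges in law to $\mu:=\mathrm{Law}(A_\infty)$.

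\emph{Step 3: $\mu$ is invariant.} The diffusion has smooth coefficients and Feller semigroup $(P_s)$. Therefore $\mathbb{E}[P_sg(Y_t)]=\mathbb{E}[g(Y_{t+s})]$ for bounded continuous $g$. Letting $t\to\infty$ on both sides gives $\mu P_s=\mu$.

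\emph{Step 4: identify $\mu$ via the stationary Fokker--Planck equation.} An invariant probability density $p$ on $(0,\infty)$ must satisfy
\[
\big(2y^2p\big)''-\big((1+(2-2a)y)p\big)'=0 .
\]
Integrating once, and arguing that the probability flux constant must vanish for an integrable nonnegative solution, we obtain the first-order equation $(2y^2p)'=(1+(2-2a)y)p$. A direct check shows $p(y)=C\,y^{-a-1}e^{-1/(2y)}$ solves it; this is the unique normalisable solution. A change of variables $x=1/(2y)$ then shows this is exactly the density of $1/(2X(a,1))$, since $\widetilde f_{a,1}(1/(2y))\,(2y^2)^{-1}\propto y^{-a-1}e^{-1/(2y)}$. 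This proves the identity in law.

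\emph{Main obstacle.} The delicate point is Step~4: ruling out a nonzero flux constant and proving that the invariant measure is unique and absolutely continuous. We would handle this with one-dimensional diffusion theory. Using the scale function and speed measure, we would check that $0$ is an entrance boundary and $\infty$ is natural or inaccessible for $a>0$. Then the finite speed measure is the unique invariant law, and its density is exactly the zero-flux solution above.

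If this becomes cumbersome, we would switch to the fallback: compare Mellin transforms. Compute $\mathbb{E}[A_\infty^{-k}]$ recursively, differentiating $A_\infty^{-k}$ along the identity $A_\infty=A_t+e^{2(W(t)-at)}\widetilde A_\infty$, where $\widetilde A_\infty$ is an independent copy, and match these with the Gamma moments $\mathbb{E}[(2X)^{k}]=2^k\Gamma(a+k)/\Gamma(a)$. Alternatively, we could simply cite \cite{yor2005,revuz1999,yor2001}.
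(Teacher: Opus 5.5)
Your proof is correct, and it takes a different route from the paper. The paper gives no argument of its own; it simply quotes Dufresne's identity from \cite{yor2005,revuz1999,yor2001}. In those references the identity is classically derived from Lamperti's representation $e^{W(t)-at}=R(A_t)$, where $R$ is a Bessel process of index $-a$ started at $1$, so that $A_\infty$ is the hitting time of $0$ by $R$. Williams' time reversal turns this hitting time into the last passage time at $1$ of a Bessel process of index $a$ started at $0$, whose law $1/(2X(a,1))$ is Getoor's formula.

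You trade this Bessel-process machinery for a fixed-time reversal, It\^o's formula and one-dimensional diffusion theory. That makes the lemma self-contained and shows exactly where $a>0$ enters. The cost is Step~4, and it does close:
\begin{itemize}
\item The scale and speed densities of $Y$ are $s'(y)=y^{a-1}e^{1/(2y)}$ and $m(y)=\tfrac12\,y^{-a-1}e^{-1/(2y)}$; the latter is integrable on $(0,\infty)$ if and only if $a>0$.
\item Since $s(0+)=-\infty$ and $s'(y)\int_0^y m(z)\,dz\to1$ as $y\downarrow0$, the origin is an entrance boundary.
\item Since $s(+\infty)=+\infty$ and $s'(y)\int_y^\infty m(z)\,dz\sim 1/(2ay)$, infinity is natural.
\item Concretely, every solution of the stationary Fokker--Planck equation has the form $p=m\,(c_1+c_2s)$, with flux $(2y^2p)'-(1+(2-2a)y)p=c_2$. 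Because $m(y)s(y)\sim 1/(2ay)$ is not integrable at infinity, integrability forces $c_2=0$, as you claimed.
\end{itemize}

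To finish Step~4 you should state two things explicitly. First, $\mu$ is carried by $(0,\infty)$, which is clear because $A_\infty>0$. Second, $\mu$ has a smooth density there. This follows from $\int\mathcal{L}g\,d\mu=0$ for $g\in C_c^\infty((0,\infty))$, which you get from invariance plus Dynkin's formula, together with nondegeneracy of $\mathcal{L}$ on $(0,\infty)$. With these two points added, the Mellin-transform fallback is unnecessary.
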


Let
\[
W_\ast(t):=\sup_{0\le s\le t}|W(s)|,\qquad t>0,
\]
where $(W(t))_{t\ge0}$ is a one-dimensional standard Brownian motion. By the
classical maximal inequality for Brownian motion (see, e.g.,
\cite[p.~96]{Karatzas}),
for any $A>0$ and $t>0$,
\[
\mathbb{P}\big(W_\ast(t)\ge A\big)
\le \frac{4\sqrt{t}}{A\sqrt{2\pi}}\exp\!\left(-\frac{A^{2}}{2t}\right).
\]
In particular, $W_\ast(t)<\infty$ almost surely for every fixed $t>0$.

Fix a (possibly random) $t\in(0,\infty)$ and choose $b>1$ such that
\begin{equation}\label{Bs1}
\left\{
\begin{aligned}
b^{\,q-p}\delta\,e^{-\sigma(q-p)W_\ast(t)}
&\ge \frac{\beta M_1^{p}+(\lambda_1+\vartheta)M_1}{|D|^{\,1-q}}
\ \ge\ \frac{\beta M_1^{p}}{|D|^{\,1-q}},\ \text{and}\\[0.3em]
b^{\,q-p}\delta\,e^{-\sigma(q-1)W_\ast(t)}
&\ge
\frac{2\beta
\left(\displaystyle\int_D \varphi_1^{\frac{q}{q-p}}(x)\,dx\right)^{\frac{q-p}{p}}}
{\left(\displaystyle\int_D \varphi_1^{p+1}(x)\,dx\right)^{\frac{q-p}{p}}},
\end{aligned}
\right.
\end{equation}
recalling that $(\lambda_1,\varphi_1)$ denotes the principal Dirichlet eigenpair of
$-\Delta_\alpha$ in $D$ (normalised as in \eqref{eq:first-eig-norm}) and
$M_1:=\displaystyle \sup_{x\in D}\varphi_1(x)\in(0,\infty).
$
The next theorem, inspired by the analysis in \cite{liang}, provides (i) an almost sure upper bound on the blow--up time and
(ii) an explicit lower bound on the blow--up probability for the Brownian-driven
random PDE \eqref{ss1} (and hence, via the transformation, for \eqref{b1} in the
Brownian-equivalent regime).

\begin{theorem}\label{thm6.1}
Suppose that $p,q>1$ with $p>q$. Let the initial datum satisfy
$f(x)\ge b\,\varphi_1(x)$ for all $x\in D$, where $b>1$ is chosen so that
\eqref{Bs1} holds. Let $ \tau_{b}$ denote the (maximal existence) blow--up time of the
solution $v$ to \eqref{ss1}. Define
\[
J(0):=\int_D f(x)\varphi_1(x)\,dx,\qquad
\rho:=\sigma(q-1),\qquad
\theta_1:=\frac{2(\lambda_1+\vartheta)(q-1)}{\rho^2}.
\]
Then $ \tau_{b}\le \tau^{**}$, where
\[
\tau^{**}
:= \inf\Biggl\{ t\ge 0:
\int_0^t \exp\!\big\{\sigma(q-1)W(s)-(\lambda_1+\vartheta)(q-1)s\big\}\,ds
\ge
\frac{2J(0)^{\,1-q}}
{\delta(q-1)\left(\displaystyle\int_D \varphi_1^{\frac{q}{q-p}}(x)\,dx\right)^{\frac{p-q}{p}}}
\Biggr\}.
\]
Moreover, the blow--up probability admits the lower bound
\begin{equation}\label{ik37}
\mathbb{P}( \tau_{b}<\infty)\ \ge\ \mathbb{P}(\tau^{**}<\infty)
=1-\mathbb{P}\!\left(
\delta(q-1)\left(\int_D \varphi_1^{\frac{q}{q-p}}(x)\,dx\right)^{\frac{p-q}{p}}
< X(\theta_1,1)\,\rho^2\,J(0)^{\,1-q}
\right),
\end{equation}
where $X(\theta_1,1)$ is a Gamma random variable (shape $\theta_1$, scale $1$).
\end{theorem}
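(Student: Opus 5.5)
The plan has two parts. For the bound $\tau_b\le\tau^{**}$ we repeat the proof of Theorem~\ref{thm5.1} for \eqref{ss1}, with the substitutions $\gamma\mapsto-\vartheta$ and $B^H\mapsto W$, and with \eqref{Bs1} playing the role of \eqref{B1}. For the probability estimate we rescale the Brownian exponential functional and apply Lemma~\ref{ll1}.

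\emph{Step 1 (stationary subsolution).} Take $\bar v=b\varphi_1$. Since $\int_D\varphi_1^q\ge|D|^{1-q}$ and $e^{\pm\sigma W(t)}\ge e^{-\sigma W_\ast(t)}$, the first inequality of \eqref{Bs1} gives
\[
\delta e^{(q-1)\sigma W}b^q\!\int_D\varphi_1^q-\beta e^{(p-1)\sigma W}b^p\varphi_1^p\ \ge\ (\lambda_1+\vartheta)b\varphi_1 .
\]
The right-hand side is exactly what is needed to absorb the linear part $\partial_t\bar v-(\Delta_\alpha-\vartheta)\bar v=(\lambda_1+\vartheta)b\varphi_1$. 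Proposition~\ref{p2}, applied to \eqref{ss1}, then yields $v\ge b\varphi_1$.

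\emph{Step 2 (Bernoulli inequality).} Set $J(t)=\int_Dv\varphi_1$. Testing \eqref{ss1} against $\varphi_1$ gives
\[
J'=-(\lambda_1+\vartheta)J+\delta e^{(q-1)\sigma W}\int_Dv^q-\beta e^{(p-1)\sigma W}\int_Dv^p\varphi_1 .
\]
We bound the nonlocal term from below by H\"older, exactly as in Step~2 of Theorem~\ref{thm5.1}. The damping bracket is then controlled by $v\ge b\varphi_1$ together with the second inequality of \eqref{Bs1}, and Jensen gives $\int_Dv^p\varphi_1\ge J^p$. The result is
\[
J'\ge-(\lambda_1+\vartheta)J+c_0e^{\rho W}J^q,
\]
with $c_0$ as in \eqref{eq:J-ineq-final}. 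Comparing $J$ with the explicit Bernoulli solution shows that $J$ blows up no later than the time at which
\[
\int_0^te^{\rho W(s)-(\lambda_1+\vartheta)(q-1)s}\,ds\ \ge\ \frac{J(0)^{1-q}}{(q-1)c_0},
\]
and this is $\tau^{**}$. Hence $\tau_b\le\tau^{**}$ and $\mathbb P(\tau_b<\infty)\ge\mathbb P(\tau^{**}<\infty)$.

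\emph{Step 3 (law of the functional).} Put $K:=\frac{2J(0)^{1-q}}{\delta(q-1)(\int_D\varphi_1^{q/(q-p)})^{(p-q)/p}}$, the right-hand threshold in the definition of $\tau^{**}$. Then $\mathbb P(\tau^{**}=\infty)=\mathbb P\big(\int_0^\infty e^{\rho W(s)-(\lambda_1+\vartheta)(q-1)s}ds< K\big)$. Substitute $s=4u/\rho^2$ and use Brownian scaling, $\rho W(4u/\rho^2)\stackrel{law}{=}2\widetilde W(u)$. The integral becomes
\[
\frac{4}{\rho^2}\int_0^\infty e^{2(\widetilde W(u)-\bar\alpha_1u)}\,du,\qquad \bar\alpha_1=\frac{2(\lambda_1+\vartheta)(q-1)}{\rho^2}=\theta_1 .
\]
Here we assume $\lambda_1+\vartheta>0$, i.e.\ $\theta_1>0$; otherwise the integral diverges almost surely by Lemma~\ref{l2}, and blow-up is almost sure. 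By Lemma~\ref{ll1} the integral has the law of $\frac{2}{\rho^2X(\theta_1,1)}$. Therefore
\[
\mathbb P(\tau^{**}=\infty)=\mathbb P\Big(\frac{2}{\rho^2X}<K\Big)=\mathbb P\Big(\delta(q-1)\Big(\int_D\varphi_1^{\frac{q}{q-p}}\Big)^{\frac{p-q}{p}}<X\rho^2J(0)^{1-q}\Big),
\]
which is \eqref{ik37}.

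The main obstacle is Step~2, specifically justifying the removal of the damping term through \eqref{Bs1}. That condition only holds uniformly on $[0,t]$ because it is stated with $W_\ast(t)$, so the comparison must be localized to $[0,t\wedge\tau_N]$, with the stopping times $\tau_N$ of \eqref{ik19} adapted to $W$, and only afterwards extended in $t$. I would also check the scaling constants in Step~3 carefully, since a factor of $2$ there decides whether \eqref{ik37} holds exactly as stated.
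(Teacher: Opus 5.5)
Your Step~3 is the paper's computation (substitution $s=4u/\rho^2$, Brownian scaling, Lemma~\ref{ll1}, law $2/(\rho^2X(\theta_1,1))$). Your Steps~1--2 also follow the paper's route, whose proof of $\tau_b\le\tau^{**}$ just reruns Theorem~\ref{thm5.1} with linear coefficient $-(\lambda_1+\vartheta)$. That rerun fails under the stated hypothesis $p>q$, and the failure is not the uniformity issue you flagged.

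\textbf{The inequalities of Steps~1--2 go the wrong way when $p>q$.} H\"older with exponents $p/q$ and $p/(p-q)$ gives
\[
\int_D v^q\,dx=\int_D\big(v^q\varphi_1^{q/p}\big)\varphi_1^{-q/p}\,dx\le\Big(\int_D v^p\varphi_1\,dx\Big)^{\frac qp}\Big(\int_D\varphi_1^{\frac{q}{q-p}}\,dx\Big)^{\frac{p-q}{p}},
\]
so the inequality you borrow from Step~2 of Theorem~\ref{thm5.1} holds in the opposite direction. The lower bound you need is H\"older with exponents $q/p$ and $q/(q-p)$, which requires $q>p$.

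Next, keeping the bracket $\delta C-\beta e^{(p-q)\sigma W}\big(\int_D v^p\varphi_1\big)^{(p-q)/p}$ above $\tfrac{\delta}{2}C$, where $C:=\big(\int_D\varphi_1^{q/(q-p)}\big)^{(p-q)/p}$, needs an \emph{upper} bound on $\big(\int_D v^p\varphi_1\big)^{(p-q)/p}$. For $p>q$ this exponent is positive, so $v\ge b\varphi_1$ bounds it only from below, which is useless here.

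In Step~1 the same problem appears: the estimate $e^{(q-p)\sigma W(t)}\ge e^{-\sigma(q-p)W_\ast(t)}$ built into the first line of \eqref{Bs1} holds only for $q>p$. Your own bound $e^{\pm\sigma W}\ge e^{-\sigma W_\ast}$ yields $e^{-\sigma(p-q)W_\ast}$ when $p>q$.

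\textbf{This cannot be patched.} For $p>q$ the damping $-\beta e^{(p-1)\sigma W}v^p$ dominates $\delta e^{(q-1)\sigma W}|D|\,\|v\|_\infty^q$ at large amplitude. Hence $\|v(t)\|_\infty$ stays bounded on every $[0,T]$ (the mechanism of Theorem~\ref{thm:SF-gb1-fBm}(ii)), and $\tau_b=\infty$ almost surely. Meanwhile the threshold defining $\tau^{**}$ is finite; it is even $0$ if $\int_D\varphi_1^{q/(q-p)}=\infty$, which can happen since $q/(q-p)<0$ and $\varphi_1$ vanishes on $\partial D$. So $\mathbb{P}(\tau^{**}<\infty)>0$, and $\tau_b\le\tau^{**}$ fails whenever \eqref{Bs1} can be met (e.g.\ $\delta$ large).

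Everything in your Steps~1--2 only makes sense for $q>p$. The same is true of \eqref{Bs1}, whose factor $b^{q-p}$ must grow with $b$, and Theorem~\ref{thm5.2} indeed assumes $q>p>1$. The paper's proof has the same defect, since it simply defers to Theorem~\ref{thm5.1}.

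\textbf{Two further repairs, even with $q>p$.} The paper also omits both.
\begin{itemize}
\item The first line of \eqref{Bs1} only gives that the reaction term evaluated at $b\varphi_1$ is $\ge e^{(p-1)\sigma W(t)}b^{p}(\lambda_1+\vartheta)M_1$. To dominate $(\lambda_1+\vartheta)b\varphi_1$ you additionally need $b^{p-1}e^{(p-1)\sigma W(t)}\ge 1$, e.g.\ $b\ge e^{\sigma W_\ast(t)}$.
\item The left-hand sides of \eqref{Bs1} decrease as $W_\ast(t)$ grows, so the localised comparison cannot simply be ``extended in $t$''. What you actually obtain is $\tau_b\le\tau^{**}$ on the event $\{\tau^{**}\le t\}$, where $t$ is the time at which \eqref{Bs1} was imposed. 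The probability bound \eqref{ik37} must then be read on that event, not unconditionally.
\end{itemize}
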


\begin{proof}
The upper bound $ \tau_{b}\le \tau^{**}$ is obtained by repeating the eigenfunction
testing and comparison argument used in the proof of Theorem~\ref{thm5.1}, now
applied to \eqref{ss1} with the linear coefficient $-(\lambda_1+\vartheta)$.
This yields the threshold functional $\tau^{**}$ above.

To estimate $\mathbb{P}(\tau^{**}=\infty)$, note that
\[
\mathbb{P}(\tau^{**}=\infty)
=\mathbb{P}\!\left(
\int_0^\infty e^{\rho W(s)-(\lambda_1+\vartheta)(q-1)s}\,ds
<
\frac{2J(0)^{\,1-q}}
{\delta(q-1)\left(\int_D \varphi_1^{\frac{q}{q-p}}(x)\,dx\right)^{\frac{p-q}{p}}}
\right).
\]
Using Brownian scaling,
$\,2W(\tfrac{\rho^2}{4}s)\overset{law}{=}\rho W(s)$, and the change of variables
$s=\tfrac{4t}{\rho^2}$, we obtain
\[
\int_0^\infty e^{\rho W(s)-(\lambda_1+\vartheta)(q-1)s}\,ds
\ \overset{law}{=}\
\frac{4}{\rho^2}\int_0^\infty e^{2(W(t)-\theta_1 t)}\,dt,
\quad
\theta_1=\frac{2(\lambda_1+\vartheta)(q-1)}{\rho^2}.
\]
Lemma~\ref{ll1} then gives
\[
\int_0^\infty e^{2(W(t)-\theta_1 t)}\,dt
\ \overset{law}{=}\ \frac{1}{2X(\theta_1,1)}.
\]
Substituting this identity yields
\[
\mathbb{P}(\tau^{**}=\infty)
=\mathbb{P}\!\left(
\delta(q-1)\left(\int_D \varphi_1^{\frac{q}{q-p}}(x)\,dx\right)^{\frac{p-q}{p}}
<
X(\theta_1,1)\,\rho^2\,J(0)^{\,1-q}
\right),
\]
and therefore
$\mathbb{P}( \tau_{b}<\infty)\ge \mathbb{P}(\tau^{**}<\infty)
=1-\mathbb{P}(\tau^{**}=\infty)$ leading to the desired estimate \eqref{ik37}.
\end{proof}

The next theorem provides an explicit upper bound on the blow--up probability of
solutions to \eqref{ss1}.

\begin{theorem}\label{thm6.2}
Assume $p,q>1$ with $p>q$ and let $f\in L^\infty(D)$ satisfy $f\ge0$. Then
\[
\mathbb{P}\{\tau_{b}<\infty\}
\le \int_{\widetilde N}^{\infty} h(y)\,dy,
\]
where
\[
\rho:=\sigma(q-1),\quad
\widetilde N:=\frac{1}{\delta|D|(q-1)\|f\|_\infty^{\,q-1}},
\]
and
\[
h(y)
=
\frac{\left(\frac{2}{\rho^2 y}\right)^{\frac{2\vartheta(q-1)}{\rho^2}}}
{y\,\Gamma\!\left(\frac{2\vartheta(q-1)}{\rho^2}\right)}
\exp\!\left(-\frac{2}{\rho^2 y}\right).
\]
\end{theorem}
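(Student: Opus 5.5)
We reduce the estimate to the lower bound $\tau_{**}\le\tau_b$ of Theorem~\ref{st2}. If $\tau_{**}=\infty$ then $\tau_b=\infty$, so
\[
\mathbb{P}\{\tau_b<\infty\}\le \mathbb{P}\{\tau_{**}<\infty\}
=\mathbb{P}\Bigg(\int_0^\infty e^{(q-1)\sigma W(r)}\,\big\|e^{-\vartheta r}S_\alpha(r)\big\|_\infty^{q-1}\,dr\ \ge\ \widetilde N\Bigg),
\]
because $\tau_{**}<\infty$ exactly when the nondecreasing integral in \eqref{eq:tau-starstar} reaches $\widetilde N$ in finite time. The factor $\|e^{-\vartheta r}S_\alpha(r)\|_\infty$ equals $e^{-\vartheta r}\|S_\alpha(r)\|_{\infty\to\infty}$. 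The semigroup $S_\alpha$ is sub-Markovian and contractive on $L^\infty(D)$ (Section~\ref{sec3}), so $\|S_\alpha(r)\|_{\infty\to\infty}\le1$. This gives the pathwise bound
\[
\int_0^\infty e^{\rho W(r)}\big\|e^{-\vartheta r}S_\alpha(r)\big\|_\infty^{q-1}dr\le \int_0^\infty e^{\rho W(r)-\vartheta(q-1)r}\,dr,\qquad \rho=\sigma(q-1),
\]
and hence
\[
\mathbb{P}\{\tau_b<\infty\}\le \mathbb{P}\Big(\int_0^\infty e^{\rho W(r)-\vartheta(q-1)r}dr\ge \widetilde N\Big).
\]
This bound is sharp enough for the target, which involves only $\vartheta$ and not $\lambda_1$. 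Throughout we assume $\vartheta>0$, which the Gamma shape parameter $2\vartheta(q-1)/\rho^2>0$ implicitly requires. If $\vartheta\le0$ the exponential functional diverges almost surely by Lemma~\ref{l2}, and the statement is vacuous or trivially true.

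Next we identify the law of the exponential functional, as in the proof of Theorem~\ref{thm6.1}. By Brownian scaling $\rho W(s)\overset{law}{=}2W(\rho^2 s/4)$, and with $s=4t/\rho^2$,
\[
\int_0^\infty e^{\rho W(s)-\vartheta(q-1)s}ds\overset{law}{=}\frac{4}{\rho^2}\int_0^\infty e^{2(W(t)-\theta_2 t)}dt,\qquad \theta_2:=\frac{2\vartheta(q-1)}{\rho^2}.
\]
Lemma~\ref{ll1} then shows that this functional has the law of $Y:=\frac{2}{\rho^2 X(\theta_2,1)}$.

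It remains to compute the density of $Y$. Put $g(y)=2/(\rho^2y)$. Since $X=g(Y)$ and $|g'(y)|=2/(\rho^2y^2)$, the density of $Y$ is
\[
\widetilde f_{\theta_2,1}\Big(\tfrac{2}{\rho^2y}\Big)\frac{2}{\rho^2y^2}
=\frac{(2/(\rho^2y))^{\theta_2-1}}{\Gamma(\theta_2)}e^{-2/(\rho^2y)}\frac{2}{\rho^2 y^2}
=\frac{(2/(\rho^2y))^{\theta_2}}{y\,\Gamma(\theta_2)}e^{-2/(\rho^2y)}=h(y).
\]
Therefore $\mathbb{P}(Y\ge\widetilde N)=\int_{\widetilde N}^\infty h(y)\,dy$, which is the claimed bound.

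The main obstacle is justifying the reduction to the crude functional. The first point is $\|S_\alpha(r)\|_{\infty\to\infty}\le1$, which should follow from sub-Markovianity via the killed-process representation $S_\alpha(t)f(x)=\mathbb{E}_x[f(X_t)\mathbf 1_{\{t<\tau_D\}}]$. The second point is that Theorem~\ref{st2} applies to every path, so that the inclusion $\{\tau_b<\infty\}\subset\{\tau_{**}<\infty\}$ holds up to null sets. The rest is the routine change of variables above.
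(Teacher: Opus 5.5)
Your proof is correct and follows the paper's route: bound $\mathbb{P}\{\tau_b<\infty\}$ by $\mathbb{P}\{\tau_{**}<\infty\}$ via Theorem~\ref{st2}, pass to the exponential functional $\int_0^\infty e^{\rho W(s)-\vartheta(q-1)s}\,ds$, rescale, apply the Dufresne--Yor identity (Lemma~\ref{ll1}), and read off the inverse-Gamma tail $\int_{\widetilde N}^\infty h(y)\,dy$. The paper writes the reduction step as an equality, which would need $\|S_\alpha(r)\|_{\infty\to\infty}=1$; for the killed semigroup this norm is strictly below $1$, so your inequality via sub-Markovianity is the version that is actually justified, and you are also right that $\vartheta>0$ is an implicit hypothesis needed for $\Gamma\bigl(2\vartheta(q-1)/\rho^2\bigr)$ and Lemma~\ref{ll1} to make sense.
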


\begin{proof}
By the definition of the lower-bound stopping functional in the Brownian case,
\[
\mathbb{P}\{ \tau_{b}<\infty\}\le \mathbb{P}(\tau_{**}<\infty)
= \mathbb{P}\!\left(\int_0^\infty e^{\rho W(s)-\vartheta(q-1)s}\,ds\ge \widetilde N\right).
\]
Writing $W^{(\hat\alpha)}(s):=W(s)-\hat\alpha s$ with
$\hat\alpha=\vartheta(q-1)/\rho$, we have
\[
\int_0^\infty e^{\rho W(s)-\vartheta(q-1)s}\,ds
=\int_0^\infty e^{\rho W^{(\hat\alpha)}(s)}\,ds
\ \overset{law}{=}\
\frac{4}{\rho^2}\int_0^\infty e^{2W^{(\nu)}(t)}\,dt,
\quad
\nu:=\frac{2\hat\alpha}{\rho}=\frac{2\vartheta(q-1)}{\rho^2}.
\]
The exponential functional identity (see \cite[Ch.~6, Cor.~1.2]{yor2001})
yields
\[
\int_0^\infty e^{2W^{(\nu)}(t)}\,dt\ \overset{law}{=}\ \frac{1}{2Z_\nu},
\]
where $Z_\nu$ is Gamma distributed with density
$\frac{1}{\Gamma(\nu)}e^{-y}y^{\nu-1}\,dy$. Therefore,
\[
\mathbb{P}\{\tau_{b}<\infty\}
\le \mathbb{P}\!\left(\frac{2}{\rho^2 Z_\nu}\ge \widetilde N\right)
=\mathbb{P}\!\left(Z_\nu\le \frac{2}{\rho^2 \widetilde N}\right)
=\int_{\widetilde N}^{\infty} h(y)\,dy,
\]
with $h$ as stated.
\end{proof}


  \section{Numerical Results}\label{nss}
 In this section we present a numerical study of \eqref{b1} in one space
dimension ($d=1$). Throughout, we impose homogeneous Dirichlet conditions on
the complement of the interval $D=[-1,1]$.

\subsection*{Finite-difference approximation}
To approximate \eqref{b1} numerically we employ a finite-difference
discretisation in space combined with a semi-implicit Euler scheme in time,
cf.~\cite{Duo18,Lord}. Related strategies have also been used in~\cite{kava}.

We discretise $[0,T]\times[-1,1]$ by setting $t_n=n\Delta t$ with
$\Delta t=T/N$ ($n=0,1,\dots,N$), and $x_j=-1+j\Delta x$ with $\Delta x=2/M$
($j=0,1,\dots,M$). Let $U_h^n=(u_0^n,u_1^n,\dots,u_M^n)$ denote the grid
approximation of $u(t_n,\cdot)$, and write
\[
u_h^n := (u_1^n,\dots,u_{M-1}^n)^{\top}\in\mathbb{R}^{M-1},
\]
since the exterior Dirichlet condition yields $u_0^n=u_M^n=0$.

\medskip
\noindent\textbf{(1) Spatial discretisation of the fractional Laplacian.}
We approximate the fractional Laplacian
\[
\Delta_\alpha u = -(-\Delta)^{\alpha/2}u,\qquad 0<\alpha\le 2,
\]
by the finite-difference method of~\cite{Duo18}, which leads to a matrix
representation
\[
(\Delta_\alpha u)(t_n,\cdot)\ \approx\ A\,u_h^n,
\]
where $A\in\mathbb{R}^{(M-1)\times(M-1)}$ has entries
\[
A_{i,j}=C_{1,\alpha}\left\{
\begin{array}{ll}
\displaystyle
\sum_{k=2}^{M-1}\frac{(k+1)^\chi-(k-1)^\chi}{k^\rho}
+\frac{(M+1)^\chi-(M-1)^\chi}{M^{\rho}}
+\Big(2^\chi+\kappa_\rho-1\Big)+\frac{\chi}{\alpha M^{2\alpha}},
& i=j,\\[1.1em]
\displaystyle
-\frac{\big(|j-i|+1\big)^\chi-\big(|j-i|-1\big)^\chi}{2\,|j-i|^\rho},
& \Big(j\neq i,\\ &j\neq i\pm 1\Big),\\[1.1em]
\displaystyle
-\frac12\Big(2^\chi+\kappa_\rho-1\Big),
& j=i\pm 1,
\end{array}
\right.
\]
with
\[
\chi=\rho-2\alpha,\qquad
\kappa_\rho=
\begin{cases}
1+2\alpha, & 2\alpha\in(1,2),\\
1, & 2\alpha=1,
\end{cases}
\qquad
\rho\in(2\alpha,2].
\]
In our simulations we take $\rho=1+\alpha$. We refer to~\cite{Duo18} for the
derivation, admissible parameter ranges, and numerical considerations.

\medskip
\noindent\textbf{(2) Semi-implicit Euler time stepping.}
Let the drift term \[
F(u):=\delta\Big(\int_D u^q dx\Big)+\gamma u-\beta u^p.
\]at time $t_n$ be approximated by
\[
F(u_h^n)=\delta\,\mathcal{I}\!\big[(u_h^n)^q\big]+\gamma u_h^n-\beta (u_h^n)^p,
\]
where $\mathcal{I}[(u_h^n)^q]$ denotes the (replicated) numerical approximation
of the scalar integral $\int_{-1}^1 u(t_n,y)^q\,dy$, evaluated for instance via
Simpson’s rule and then placed into $\mathbb{R}^{M-1}$ as a constant vector
(multiplying the all-ones vector).

For the stochastic forcing we write
\[
B_h(t_n):=\sigma(u_h^n)\,\big(k\,b_s^H(t_n)\big)\in\mathbb{R}^{M-1},
\]
where $b_s^H(t_n)$ denotes the discrete fractional Gaussian noise increment and
$\sigma(\cdot)$ is the chosen multiplicative coefficient (e.g.\ $\sigma(u)=\sigma u$).

Using a semi-implicit treatment of the diffusion (implicit in $A$ and explicit
in the nonlinearities), the update reads
\[
\frac{u_h^{n+1}-u_h^n}{\Delta t}
= A\,u_h^{n+1}+F(u_h^n)+B_h(t_n),
\]
equivalently,
\[
(I-\Delta t\,A)\,u_h^{n+1}
= u_h^n+\Delta t\,F(u_h^n)+\Delta t\,B_h(t_n).
\]

\medskip
\noindent\textbf{(3) Sampling fractional Brownian motion.}
Fractional Brownian motion paths are generated via the circulant embedding
method; see, e.g.,~\cite[Chapter~6]{Lord}. In particular, for $H=\tfrac12$ the
increments reduce to those of a standard Brownian motion, and the discrete
noise can be represented by
\[
b_s^{1/2}(t_n)=\frac{\xi(t_{n+1})-\xi(t_n)}{\sqrt{\Delta t}},\qquad
b_s(t_n)\sim N(0,1)\ \text{i.i.d.},
\]
where $\xi$ is a standard Brownian motion.

 \subsection*{Simulations}

We begin by illustrating a representative sample path of the numerical
solution of~\eqref{b1}; see Figure~\ref{Fig_sim1}(a). In this experiment we take
\[
\delta=1,\qquad \gamma=1,\qquad \beta=1,\qquad \sigma=0.1,\qquad q=2,\qquad p=2,
\]
with fractional diffusion exponent $\alpha=1.2$ and Hurst index $H=0.6$.
The initial datum is chosen as
\[
u(0,x)=c\,(1-x^2)+V_1(x),\qquad c=0.1,
\]
where $V_1$ denotes the numerical approximation of the eigenfunction
corresponding to the principal Dirichlet eigenvalue of the fractional Laplacian
on $D=[-1,1]$ with homogeneous exterior condition. For this particular
realisation the onset of blow--up is clearly visible: the solution profile
steepens rapidly and its amplitude grows sharply over a short time window.

For comparison, Figure~\ref{Fig_sim1}(b) shows $\|u(\cdot,t)\|_\infty$ computed
from five independent realisations with the same parameter set. All five
trajectories exhibit a qualitatively similar explosive behaviour, indicating
that (for these parameters and this time horizon) blow--up is not a rare event
and that the maximal amplitude is a robust diagnostic of the instability.

\begin{figure}[!htb]\vspace{0cm}\hspace{0cm}
   \begin{minipage}{0.48\textwidth}
     \centering    
     \includegraphics[width=.8\linewidth]{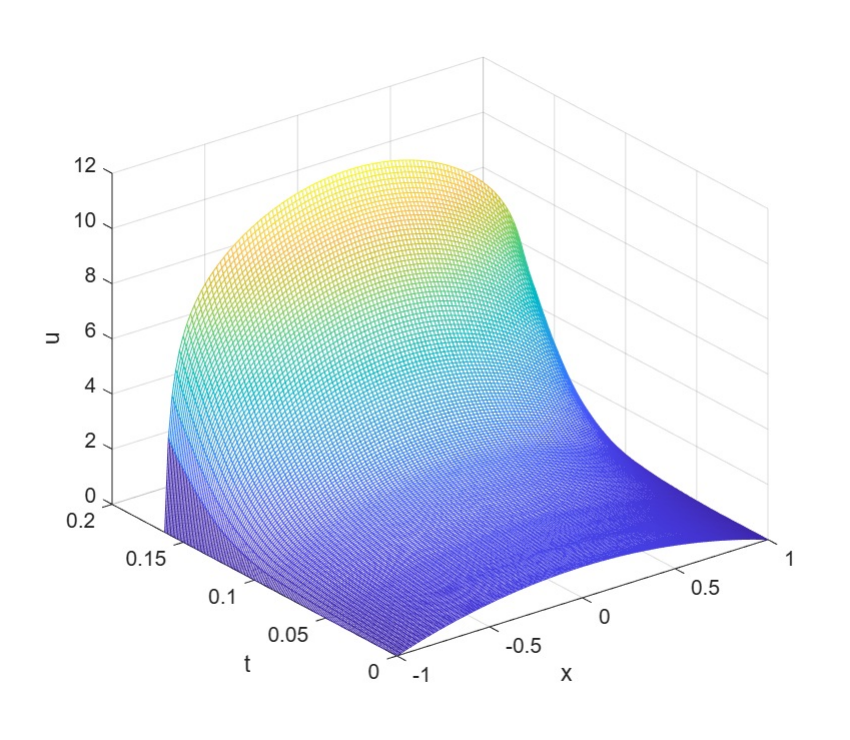}
   \end{minipage}\hfill
   \begin{minipage}{0.48\textwidth}\hspace{0cm}
     \centering
     \includegraphics[width=.8\linewidth]{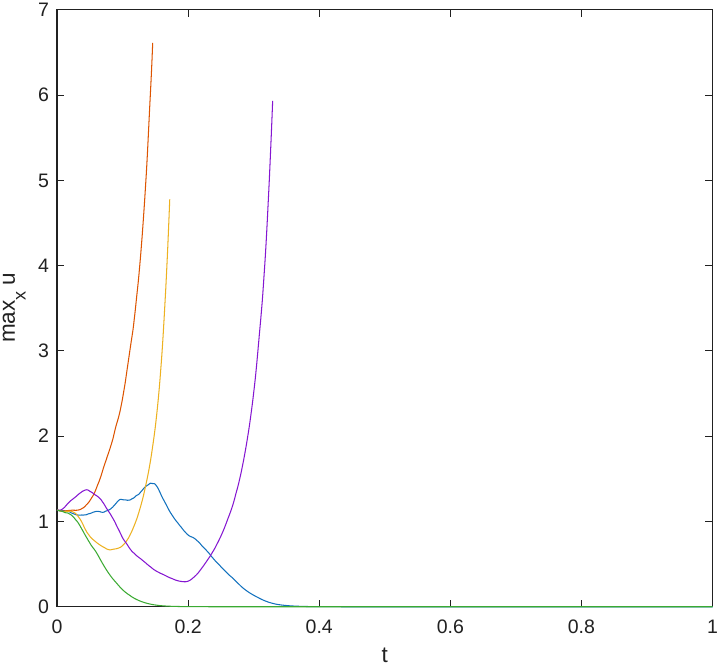}
   \end{minipage}\vspace{0cm}
   \caption{(a) A realisation of the numerical solution of~\eqref{b1} with
   $\alpha=1.2$, $H=0.6$, $\delta=1$, $\gamma=1$, $\beta=1$, $\sigma=0.1$,
   $M=101$, $N=10^4$, and initial condition $u(0,x)=c(1-x^2)+V_1(x)$ with
   $c=0.1$. (b) The evolution of $\|u(\cdot,t)\|_\infty$ for five independent
   realisations with the same parameters.}\label{Fig_sim1}
\end{figure}

\medskip
A natural quantitative question is how to estimate, for a prescribed horizon
$T>0$, the probability that blow--up occurs before $T$, i.e.\ $\mathbb{P}(\tau_b<T)$,
and how this probability depends on key parameters. In the deterministic
counterpart with classical diffusion (namely $\alpha=2$ and $\sigma\equiv 0$)
it is well known that sufficiently large $\delta$, sufficiently large $q$, or
sufficiently large initial data can enforce finite--time blow--up; see, e.g.,
\cite[Thmeorem ~43.1]{SoupletQuittner2007}. From an applications perspective, it is therefore
informative to report Monte--Carlo estimates of $\mathbb{P}(\tau_b<T)$ and of
basic statistics of the blow--up time.

In Tables~\ref{T1}--\ref{T5} we report results based on $N_R=10^4$ independent
realisations on the time interval $[0,1]$. The numerical blow--up time $\tau_b$
is recorded as
\[
\tau_b \approx t_m,
\qquad
t_m:=\max\Big\{t_n:\ \max_j u(t_n,x_j)\ge M_b\Big\},
\]
where $M_b$ is a fixed large threshold; in the computations below we take
$M_b=4.5036\times 10^{15}$. For each parameter configuration we estimate
\[
\widehat{\mathbb{P}}(\tau_b<1)
=\frac{\#\{\text{realisations with blow--up before }1\}}{N_R},
\]
together with the empirical mean $m(\tau_b)$ and empirical variance
$\Var(\tau_b)$ computed over the realisations that blow--up within $[0,1]$.

Moreover, for the following simulations, we choose  as indicative  values of the parameters to be
  $\alpha=1.2$, $H=0.6$, $\delta=1$, $\gamma=.1$, $\beta=1$, 
  $p=2$, $q=2$, $\sigma=0.1$ initial condition $u(0,x)=c(1-x^2)+V_1(x)$ with $c=0.01$
  and in each set of simulations we vary the parameter of interest, while for  the rest of them we keep the aforementioned values.


\begin{table}[!htb]
\centering
\caption{ Realisations of the numerical solution of
problem~\eqref{b1} for $N_R=10000$ on $[0,1]$. Variation of the exponent $q$
(recall that $p=2$, $\sigma=0.1$).}
\label{T1}
\bigskip
\begin{tabular}{||c||c|c|c||}\hline
$q$ & Blow--up Probability & $m(\tau_b)$ & $\Var(\tau_b)$\\ \hline
1.5 & 0      & --     & --     \\
2   & 0.4857 & 0.4434 & 0.0390 \\
2.5 & 0.5169 & 0.3979 & 0.0424 \\
3   & 0.5332 & 0.3766 & 0.0426 \\
3.5 & 0.5347 & 0.3632 & 0.0430 \\
4   & 0.5375 & 0.3539 & 0.0421 \\
4.5 & 0.5439 & 0.3490 & 0.0444 \\
5   & 0.5451 & 0.3449 & 0.0432 \\
10  & 0.5542 & 0.3241 & 0.0436 \\
\hline
\end{tabular}
\end{table}

\begin{table}[!htb]
\centering
\caption{ Realisations of the numerical solution of
problem~\eqref{b1} for $N_R=10000$ on $[0,1]$. Variation of the Hurst index $H$.
}
\label{T2}
\bigskip
\begin{tabular}{||c||c|c|c||}\hline
$H$ & Blow--up Probability & $m(\tau_b)$ & $\Var(\tau_b)$\\ \hline
0.5  & 0.4646 & 0.4948 & 0.0375 \\
0.6  & 0.4956 & 0.4441 & 0.0384 \\
0.65 & 0.5059 & 0.4194 & 0.0386 \\
0.7  & 0.5287 & 0.3938 & 0.0386 \\
0.75 & 0.5270 & 0.3698 & 0.0400 \\
0.8  & 0.5481 & 0.3469 & 0.0390 \\
0.85 & 0.5534 & 0.3259 & 0.0392 \\
0.9  & 0.5592 & 0.3078 & 0.0404 \\
0.95 & 0.5557 & 0.2841 & 0.0371 \\
\hline
\end{tabular}
\end{table}

\begin{table}[!htb]
\centering
\caption{ Realisations of the numerical solution of
problem~\eqref{b1} for $N_R=10000$ on $[0,1]$. Variation of the noise intensity
$\sigma$ (additionally here is chosen $\delta=7$  to ensure that the deterministic case blows up
within the simulated horizon).}
\label{T3}
\bigskip
\begin{tabular}{||c||c|c|c||}\hline
$\sigma$ & Blow--up Probability & $m(\tau_b)$ & $\Var(\tau_b)$\\ \hline
0      & 1      & 0.7800 & 0      \\
0.001  & 0.9346 & 0.7828 & 0.0065 \\
0.005  & 0.6364 & 0.6508 & 0.0190 \\
0.01   & 0.5819 & 0.5638 & 0.0240 \\
0.05 & 0.5839  &  0.3758  &  0.0331\\
0.075 & 0.5886  &  0.3411 &   0.0354\\
 0.1 &  0.6006  &  0.3190  &  0.0367 \\
0.5 & 0.6626   & 0.2126  &  0.0384 \\
1 &  0.6854  &  0.1819  &  0.0379 \\
\hline
\end{tabular}
\end{table}

\begin{table}[!htb]
\centering
\caption{ Realisations of the numerical solution of
problem~\eqref{b1} for $N_R=10000$ on $[0,1]$. Variation of the initial
condition coefficient $c$ in $u(0,x)=c(1-x^2)+V_1(x)$. }
\label{T4}
\bigskip
\begin{tabular}{||c||c|c|c||}\hline
$c$ & Blow--up Probability & $m(\tau_b)$ & $\Var(\tau_b)$\\ \hline
0.01 & 0.5797 & 0.3034 & 0.0338 \\
0.05 & 0.6115 & 0.7828 & 0.0363 \\
0.1  & 0.6255 & 0.2902 & 0.0345 \\
1    & 0.8828 & 0.1238 & 0.0101 \\
2    & 0.9976 & 0.0590 & 0.0003 \\
2.5  & 1      & 0.0478 & 0      \\
\hline
\end{tabular}
\end{table}

\begin{table}[!htb]
\centering
\caption{Realisations of the numerical solution of
problem~\eqref{s1} for $N_R=10000$ on $[0,1]$. 
Variation of the fractional Laplacian exponent $\alpha$ (reported as $\alpha/2$).}
\label{T5}
\bigskip
\begin{tabular}{||c||c|c|c||}\hline
$\alpha/2$ & Blow--up Probability & $m(\tau_b)$ & $\Var(\tau_b)$\\ \hline
0.95 & 0.4821 & 0.4590 & 0.0394 \\
0.9  & 0.4899 & 0.4496 & 0.0390 \\
0.8  & 0.4815 & 0.4512 & 0.0394 \\
0.7  & 0.4895 & 0.4402 & 0.0377 \\
0.6  & 0.4943 & 0.4417 & 0.0396 \\
0.5  & 0.5097 & 0.4460 & 0.0389 \\
\hline
\end{tabular}
\end{table}

\paragraph{Interpretation of Tables~\ref{T1}--\ref{T5}.}
\begin{enumerate}[label=(\roman*)]
\item \textbf{Effect of the reaction exponent $q$ (Table~\ref{T1}).}
Keeping $p=2$ fixed, increasing $q$ strengthens the nonlocal source term
$\delta\int_D u^q$, which promotes faster growth at high amplitudes. This is
consistent with Table~\ref{T1}: once $q$ is sufficiently large (here near
$q\approx 2$), the estimated blow--up probability within $[0,1]$ increases and
then plateaus around $0.53$--$0.55$. Meanwhile, the conditional mean blow--up time
decreases, indicating that blow--up (when it occurs) tends to occur earlier for
larger $q$.

\item \textbf{Effect of temporal correlation $H$ (Table~\ref{T2}).}
Table~\ref{T2} suggests an overall increase in the blow--up probability as the
Hurst index $H$ increases from $0.5$ toward $1$. Since larger $H$ corresponds to
more persistent increments of $B^H$, favourable growth phases can last longer,
which is compatible with both the increased probability and the decreasing mean
blow--up time.

\item \textbf{Effect of the noise intensity $\sigma$ (Table~\ref{T3}).}
We choose $\delta$ large enough so that the deterministic problem ($\sigma=0$)
blows up within the time horizon $[0,1]$. As $\sigma$ increases, the estimated
blow--up probability over $[0,1]$ decreases, indicating that stronger
multiplicative fluctuations generate more sample paths that are driven away from
the blow--up regime (or delay blow--up beyond $t=1$). We observe a sharp drop as
soon as $\sigma>0$, followed by a slight increase for larger $\sigma$; however,
the probability remains well below the deterministic value $1$. Moreover, among
those trajectories that do blow--up, the blow--up time tends to be smaller on
average, consistent with the fact that occasional large positive excursions of
the noise can trigger a more abrupt blow--up.

\item \textbf{Effect of initial amplitude (Table~\ref{T4}).}
Increasing the coefficient $c$ raises the overall initial mass and drives the
system deeper into the unstable regime. Table~\ref{T4} exhibits the expected
threshold-type behaviour: the blow--up probability increases rapidly toward $1$
and the mean blow--up time decreases markedly.

\item \textbf{Effect of the diffusion exponent $\alpha$ (Table~\ref{T5}).}
Over the tested range, the blow--up probability and the first two moments of
$\tau_b$ vary only mildly. For this particular domain, horizon, and parameter
regime, the reaction and noise effects appear to dominate moderate changes in
$\alpha$, so the influence of fractional diffusion on the reported blow--up
statistics is comparatively weak.
\end{enumerate}

Finally, we illustrate the almost sure exponential decay predicted by Theorem~\ref{cor:sto-exp-fBm-gb1} by numerical simulations on the time interval $[0,T]$ with $T=10$; see Figure~\ref{Fig_simTh46}. We choose $p=4>q=1$, $\delta=1$, a small noise intensity $\sigma=0.01$, and $\gamma=0.01$, while $\beta=2$ is taken sufficiently large relative to $\delta$ and $\gamma$ to enforce the strong dissipative regime. Moreover, we set $H=0.6$ and $\alpha=1.2$. For this value of $\alpha$, a numerical approximation of the principal Dirichlet eigenvalue of the fractional Laplacian yields $\lambda_1\simeq 1.3037$, and hence $\gamma<\lambda_1$ as required.
\begin{figure}[htb]
\vspace*{-1cm}
\begin{center}
\includegraphics[bb= 330 230 250 600, scale=.9]{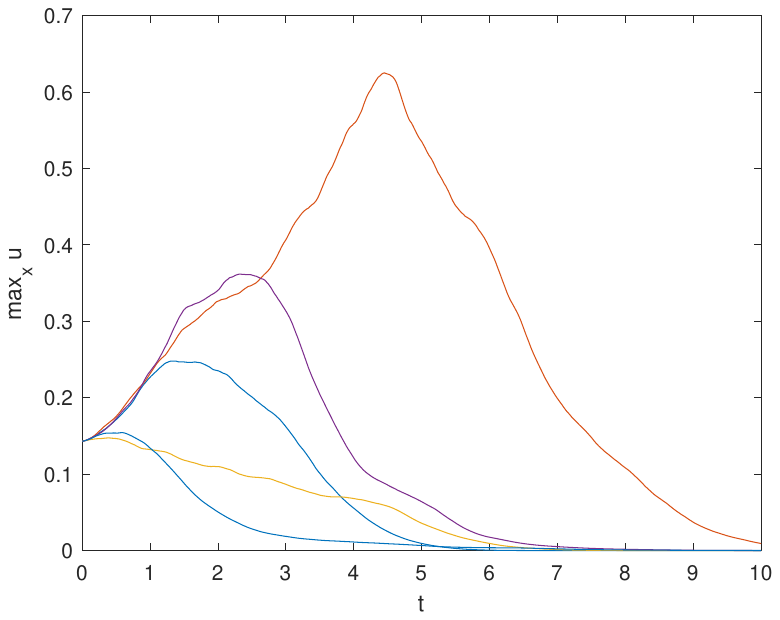}
 \vspace*{-2cm}
\end{center}
\caption{ The evolution of $\|u(\cdot,t)\|_\infty$ for five independent
   realisations with the same parameters chosen in a range given by Theorem \ref{cor:sto-exp-fBm-gb1}. In all of the cases the solution becomes  exponential small.}\label{Fig_simTh46}
\end{figure}
 We observe that the solution in the simulation range becomes exponentially small for all of the realizations presented. 


    \medskip\noindent
	{\bf Acknowledgements:}  The first author would like to thank Indian Institute of Technology Bhubaneswar, India, for providing an excellent research environment and the necessary facilities to carry out this work successfully. This work was carried out under the Innovation in Science Pursuit for Inspired Research (INSPIRE) Faculty Scheme research grant, supported by the Department of Science and Technology (DST), Project No. RP328, at the Indian
    Institute of Technology Bhubaneswar. M. T. Mohan would  like to thank the Department of Science and Technology (DST), India for Innovation in Science Pursuit for Inspired Research (INSPIRE) Faculty Award (IFA17-MA110). The third author is supported by the Fund for Improvement of Science and Technology Infrastructure (FIST) of DST (SR/FST/MSI-115/2016).
	
\section{Appendix}\label{app}
    \begin{proof}[Proof of Lemma~\ref{l1}]
We split the proof into two parts.

\medskip

\noindent\textbf{Step 1. Weak maximum principle: $w\ge 0$ in $\overline D_T$.}

We argue by contradiction. Assume that $w$ takes a negative value on
$\overline D_T$, i.e.
\[
m := \inf_{[0,T]\times\mathbb{R}^d} w < 0.
\]
By continuity, this infimum is attained at some point
$(t_0,x_0)\in [0,T]\times\mathbb{R}^d$.
The boundary and initial conditions imply that:
\begin{itemize}
	\item $w(0,x)\ge 0$ for all $x\in D$,
	\item $w(t,x)\ge 0$ for all $(t,x)\in (0,T]\times(\mathbb{R}^d\setminus D)$.
\end{itemize}
Thus the negative minimum cannot lie at $t=0$ or outside $D$, and we must have
$(t_0,x_0)\in D_T$.

\medskip

\noindent\emph{Exponential change of variable.}
Let
\[
M_1:=\|c_1\|_{L^\infty(D_T)}, \qquad
M_2:=\|c_2\|_{L^\infty((0,T)\times D)}, \qquad
|D|:=\text{Lebesgue measure of }D.
\]
Choose
\[
\Lambda > M_1 + M_2 |D|,
\]
and define
\[
v(t,x) := e^{-\Lambda t} w(t,x).
\]
Then $v$ is bounded and continuous on $[0,T]\times\mathbb{R}^d$, and attains a
negative global minimum
\[
m_v := \inf_{[0,T]\times\mathbb{R}^d} v < 0
\]
at some point $(t_1,x_1)\in D_T$ (since $w$ and $v$ have the same sign and the
same zero sets in time). Clearly,
\[
v(t,x)\ge 0 \quad\text{for } t=0 \text{ or } x\in\mathbb{R}^d\setminus D,
\]
so the minimum is indeed attained in $D_T$.

We claim that $v$ is a viscosity supersolution in $D_T$ of
\begin{equation}\label{eq:v-visc}
v_t(t,x) - d\,\Delta_{\alpha} v(t,x)
\;\ge\;
(c_1(t,x)-\Lambda)\,v(t,x)
+ \int_D c_2(t,y)\,v(t,y)\,dy.
\end{equation}
Indeed, let $\varphi\in C^{1,2}$ and suppose $v-\varphi$ has a local minimum at
$(t,x)\in D_T$ with $(v-\varphi)(t,x)=0$. Define the test function
\[
\psi(\tau,z) := e^{\Lambda \tau}\varphi(\tau,z),
\]
and observe that $w-\psi$ also has a local minimum at $(t,x)$:
\[
(w-\psi)(\tau,z) = e^{\Lambda \tau}v(\tau,z) - e^{\Lambda\tau}\varphi(\tau,z)
= e^{\Lambda\tau}(v-\varphi)(\tau,z).
\]
Since $w$ is a viscosity supersolution of \eqref{eq:visc-fractional}, we have
\[
\psi_t(t,x) - d\Delta_{\alpha} \psi(t,x)
\;\ge\;
c_1(t,x) w(t,x)
+ \int_D c_2(t,y) w(t,y)\,dy.
\]
But $w=e^{\Lambda t}v$ and
\[
\psi_t(t,x)
= \Lambda e^{\Lambda t}\varphi(t,x) + e^{\Lambda t}\varphi_t(t,x)
= e^{\Lambda t}\big(\Lambda \varphi(t,x) + \varphi_t(t,x)\big),
\]
\[
\Delta_{\alpha}\psi(t,x) = e^{\Lambda t}\Delta_{\alpha}\varphi(t,x)
\]
(using linearity and translation invariance of $\Delta_{\alpha}$). Dividing by $e^{\Lambda t}>0$, we get
\[
\varphi_t(t,x) + \Lambda \varphi(t,x) - d\Delta_{\alpha}\varphi(t,x)
\;\ge\;
c_1(t,x) v(t,x)
+ \int_D c_2(t,y) v(t,y)\,dy.
\]
Since $\varphi(t,x)=v(t,x)$ at the touching point, this is exactly
\eqref{eq:v-visc} in viscosity sense. Thus $v$ is a viscosity supersolution of
\eqref{eq:v-visc}.

\medskip

\noindent\emph{Contradiction at the global minimum.}
Let $(t_1,x_1)\in D_T$ be such that $v(t_1,x_1)=m_v=\min v <0$.
By continuity, for any $\varepsilon>0$ we can define a quadratic test function
\[
\varphi_\varepsilon(t,x)
:= m_v + \varepsilon\big(|x-x_1|^2 + (t-t_1)^2\big)
\]
so that $v-\varphi_\varepsilon$ attains a local minimum $0$ at $(t_1,x_1)$.
(Indeed $v(t_1,x_1)=m_v$ and $v\ge m_v$ everywhere, while
$\varphi_\varepsilon> m_v$ away from $(t_1,x_1)$ for small enough $\varepsilon$.)

Since $\varphi_\varepsilon$ is $C^{1,2}$, we may use it as test function in the
viscosity supersolution inequality \eqref{eq:v-visc} at $(t_1,x_1)$:
\[
(\varphi_\varepsilon)_t(t_1,x_1) - d\Delta_{\alpha} \varphi_\varepsilon(t_1,x_1)
\;\ge\;
(c_1(t_1,x_1)-\Lambda)\,v(t_1,x_1)
+ \int_D c_2(t_1,y)\,v(t_1,y)\,dy.
\]

Now,
\[
(\varphi_\varepsilon)_t(t_1,x_1) = 0,
\]
and $\varphi_\varepsilon(t_1,\cdot)$ has a \emph{strict global minimum} at
$x_1$ (since $|x-x_1|^2>0$ for $x\neq x_1$). By the very definition of
the fractional Laplacian,
\[
\Delta_{\alpha} \varphi_\varepsilon(t_1,x_1)
= c_{d,s}\,\mathrm{P.V.}\!\int_{\mathbb{R}^d}
\frac{\varphi_\varepsilon(t_1,x_1) - \varphi_\varepsilon(t_1,y)}{|x_1-y|^{d+\alpha}}
\,dy,
\]
and the integrand is strictly negative on a set of positive measure
($\varphi_\varepsilon(t_1,y)>\varphi_\varepsilon(t_1,x_1)$ for $y\neq x_1$).
Hence
\[
\Delta_{\alpha} \varphi_\varepsilon(t_1,x_1) > 0,
\]
and therefore
\[
(\varphi_\varepsilon)_t(t_1,x_1) - d\Delta_{\alpha}\varphi_\varepsilon(t_1,x_1)
= -d\Delta_{\alpha}\varphi_\varepsilon(t_1,x_1) < 0.
\]

On the other hand, since $(t_1,x_1)$ is a global minimum point of $v$,
we have $v(t_1,y)\ge m_v$ for all $y\in D$, and thus
\[
\int_D c_2(t_1,y)\,v(t_1,y)\,dy
\;\ge\;
m_v \int_D c_2(t_1,y)\,dy.
\]
Therefore the right-hand side of \eqref{eq:v-visc} at $(t_1,x_1)$ satisfies
\[
\begin{aligned}
(c_1(t_1,x_1)-\Lambda)\,v(t_1,x_1)
+ \int_D c_2(t_1,y)\,v(t_1,y)\,dy
&\ge
m_v\Big(c_1(t_1,x_1)-\Lambda + \int_D c_2(t_1,y)\,dy\Big).
\end{aligned}
\]

By the choice of $\Lambda$ and the bounds on $c_1,c_2$,
\[
c_1(t_1,x_1)-\Lambda + \int_D c_2(t_1,y)\,dy
\le
M_1 - \Lambda + M_2 |D|
< 0,
\]
so the factor in parentheses is strictly negative, and since $m_v<0$ we get
\[
m_v\Big(c_1(t_1,x_1)-\Lambda + \int_D c_2(t_1,y)\,dy\Big) > 0.
\]
Hence the right-hand side of \eqref{eq:v-visc} at $(t_1,x_1)$ is strictly positive:
\[
(c_1(t_1,x_1)-\Lambda)\,v(t_1,x_1)
+ \int_D c_2(t_1,y)\,v(t_1,y)\,dy > 0.
\]

This contradicts the viscosity inequality, which says that the left-hand side,
equal to $-d\Delta_{\alpha}\varphi_\varepsilon(t_1,x_1)$, must be greater or equal
to the right-hand side. Thus our assumption that $m_v<0$ (equivalently $m<0$)
is false, and we conclude that
\[
w(t,x)\ge 0 \quad\text{for all } (t,x)\in\overline D_T.
\]

\medskip

\noindent\textbf{Step 2. Strong maximum principle: strict positivity in $D_T$ if $w(0,\cdot)\not\equiv 0$.}

Assume now that $w(0,\cdot)\not\equiv 0$ in $D$ and $w\ge 0$ on $\overline D_T$.
Suppose, by contradiction, that there exists $(t_*,x_*)\in D_T$ such that
\[
w(t_*,x_*)=0.
\]
Then $w$ attains its global minimum $0$ in the open set $D_T$.

By Step~1, $w$ is a bounded, nonnegative viscosity supersolution of
\eqref{eq:visc-fractional}, and hence of
\[
w_t - d\Delta_{\alpha} w - c_1(t,x)w
\;\ge\;
\int_D c_2(t,y)\,w(t,y)\,dy \ge 0
\quad\text{in }D_T.
\]
That is, $w$ is a viscosity supersolution of a linear, nonlocal parabolic equation
with a (possibly sign-changing) zeroth-order term and a nonnegative right-hand side.

For nonlocal parabolic equations with Lévy operators such as $(-\Delta)^s$ and
bounded coefficients, a \emph{strong maximum principle} for viscosity solutions
is known: if a bounded viscosity supersolution $u$ of
\[
u_t - \mathcal{L}[u] - a(t,x)u \ge 0
\]
(in an open set) attains a nonnegative minimum at an interior point and satisfies
appropriate nonlocal boundary conditions, then either $u$ is strictly positive
in the domain, or $u$ is constant. For linear operators of fractional Laplacian type,
this is proved, for instance, in
\cite{BarriosMedina2020,BiswasJakobsen2020,Coville2010,JarohsWeth2016}
(see also the general frameworks in
\cite{AlvarezTourin1996,BarlesImbert2008,CaffarelliSilvestre2009}).

Applying such a strong maximum principle to $w$ (or to $v=e^{-\Lambda t}w$),
we conclude that either:
\begin{itemize}
	\item $w(t,x)>0$ for all $(t,x)\in D_T$, or
	\item $w(t,x)\equiv 0$ in $D_T$.
\end{itemize}

The second alternative is not compatible with the assumption that
$w(0,\cdot)\not\equiv 0$ in $D$ and the continuity in time. Hence only the first
alternative can hold, and we obtain
\[
w(t,x)>0 \quad\text{for all } (t,x)\in D_T.
\]
This completes the proof.
\end{proof}


\begin{thebibliography}{99}


\bibitem{alinei_poiana2023_frac_oncology}
T.~Alinei-Poiana, E.~H.~Dulf and L.~Kovacs,
Fractional calculus in mathematical oncology,
\emph{Sci. Rep.} \textbf{13} (2023), 10083.

\bibitem{Allen2017}
L.~J.~S.~Allen,
\emph{Stochastic Population and Epidemic Models},
Springer, Cham, 2017.

\bibitem{AlvarezTourin1996}
O.~Alvarez and A.~Tourin,
Viscosity solutions of nonlinear integro-differential equations,
\emph{Ann. Inst. H. Poincar\'e Anal. Non Lin\'eaire} \textbf{13} (1996), no.~3, 293--317.

\bibitem{AndersonChaplain1998}
A.~R.~A.~Anderson and M.~A.~J.~Chaplain,
Continuous and discrete mathematical models of tumor-induced angiogenesis,
\emph{Bull. Math. Biol.} \textbf{60} (1998), no.~5, 857--899.

\bibitem{law}
M.~A.~Arcones,
On the law of the iterated logarithm for Gaussian processes,
\emph{J. Theoret. Probab.} \textbf{8} (1995), 877--903.

\bibitem{Arisawa2006}
M.~Arisawa,
A new definition of viscosity solutions for a class of second-order degenerate elliptic integro-differential equations,
\emph{Ann. Inst. H. Poincar\'e Anal. Non Lin\'eaire} \textbf{23} (2006), no.~5, 695--711.


\bibitem{bessonov2022_nonlocal_biomed}
M.~Banerjee, M.~Kuznetsov, O.~Udovenko and V.~Volpert,
Nonlocal reaction-diffusion equations in biomedical applications,
\emph{Acta Biotheoretica} \textbf{70} (2022), 1--35.

\bibitem{BarlesImbert2008}
G.~Barles and C.~Imbert,
Second-order elliptic integro-differential equations: viscosity solutions' theory revisited,
\emph{Ann. Inst. H. Poincar\'e Anal. Non Lin\'eaire} \textbf{25} (2008), no.~3, 567--585.

\bibitem{BarriosMedina2020}
B.~Barrios and M.~Medina,
Strong maximum principles for fractional elliptic and parabolic problems with mixed boundary conditions,
\emph{Proc. R. Soc. Edinb. A} \textbf{150} (2020), no.~2, 475--495.

\bibitem{BiswasJakobsen2020}
A.~Biswas and E.~R.~Jakobsen,
Hopf's lemma for viscosity solutions to a class of non-local equations with applications,
\emph{Nonlinear Anal.} \textbf{193} (2020), 111472.

\bibitem{BogdanGrzywnyRyznar2010}
K.~Bogdan, T.~Grzywny, and M.~Ryznar,
Heat kernel estimates for the fractional Laplacian with Dirichlet conditions,
\emph{Ann. Probab.} \textbf{38} (2010), no.~5, 1901--1923.

\bibitem{BucurValdinoci2016}
C.~Bucur and E.~Valdinoci,
\emph{Nonlocal Diffusion and Applications},
Lecture Notes of the Unione Matematica Italiana~20, Springer, 2016.

\bibitem{CaffarelliSilvestre2007}
L.~Caffarelli and L.~Silvestre,
An extension problem related to the fractional Laplacian,
\emph{Comm. Partial Differential Equations} \textbf{32} (2007), 1245--1260.

\bibitem{CaffarelliSilvestre2009}
L.~Caffarelli and L.~Silvestre,
Regularity theory for fully nonlinear integro-differential equations,
\emph{Comm. Pure Appl. Math.} \textbf{62} (2009), no.~5, 597--638.

\bibitem{chang2012}
T.~Chang and K.~Lee,
On a stochastic partial differential equation with a fractional Laplacian operator,
\emph{Stochastic Process. Appl.} \textbf{122} (2012), no.~9, 3288--3311.

\bibitem{chen2019_nonlocal_migration}
L.~Chen, K.~Painter, C.~Surulescu and A.~Zhigun,
Mathematical models for cell migration: a nonlocal perspective,
\emph{Philos. Trans. R. Soc. B} \textbf{375} (2020), 20190379.

\bibitem{ChenKimSong2010}
Z.-Q.~Chen, P.~Kim, and R.~Song,
Heat kernel estimates for the Dirichlet fractional Laplacian,
\emph{J. Eur. Math. Soc. (JEMS)} \textbf{12} (2010), no.~5, 1307--1329.

\bibitem{ChenSong2005}
Z.-Q.~Chen and R.~Song,
Two-sided eigenvalue estimates for subordinate processes in domains,
\emph{J. Funct. Anal.} \textbf{226} (2005), no.~1, 90--113.

\bibitem{cher2001}
P.~Cheridito,
Mixed fractional Brownian motion,
\emph{Bernoulli} \textbf{7} (2001), 913--934.

\bibitem{chow09}
P.~L.~Chow,
Unbounded positive solutions of nonlinear parabolic It\^o equations,
\emph{Commun. Stoch. Anal.} \textbf{3} (2009), no.~2, 211--222.

\bibitem{chow11}
P.~L.~Chow,
Explosive solutions of stochastic reaction-diffusion equations in mean $L^p$-norm,
\emph{J. Differential Equations} \textbf{250} (2011), no.~5, 2567--2580.

\bibitem{Coville2010}
J.~Coville,
Maximum principles, sliding techniques and applications to nonlocal equations,
\emph{Electron. J. Differential Equations} \textbf{2007} (2007), no.~68, 1--23.


\bibitem{DaPratoZabczyk1992}
G.~Da Prato and J.~Zabczyk,
\emph{Stochastic Equations in Infinite Dimensions},
Encyclopedia of Mathematics and its Applications~44, Cambridge Univ. Press, 1992.


\bibitem{DiNezzaPalatucciValdinoci2012}
E.~Di Nezza, G.~Palatucci and E.~Valdinoci,
Hitchhiker's guide to the fractional Sobolev spaces,
\emph{Bull. Sci. Math.} \textbf{136} (2012), 521--573.


\bibitem{doz2010}
M.~Dozzi and J.~A.~L\'opez-Mimbela,
Finite-time blowup and existence of global positive solutions of a semi-linear SPDE,
\emph{Stochastic Process. Appl.} \textbf{120} (2010), 767--776.

\bibitem{doz2013}
M.~Dozzi, E.~T.~Kolkovska and J.~A.~L\'opez-Mimbela,
Exponential functionals of Brownian motion and explosion times of a system of semilinear SPDEs,
\emph{Stochastic Anal. Appl.} \textbf{31} (2013), no.~6, 975--991.

\bibitem{dozfrac2013}M.~Dozzi, E.~T.~Kolkovska, and J.~A.~López-Mimbela. Finite-time blowup and existence of global positive solutions of a semi-linear stochastic partial differential equation with fractional noise, \emph{Modern stochastics and applications}. Cham: Springer International Publishing, 2013, 95--108.

\bibitem{doz2020}
M.~Dozzi, E.~T.~Kolkovska and J.~A.~L\'opez-Mimbela,
Global and non-global solutions of a fractional reaction-diffusion equation perturbed by a fractional noise,
\emph{Stochastic Anal. Appl.} \textbf{38} (2020), no.~6, 959--978.




\bibitem{Duo18}
S.~Duo, H.~Wang and Y.~Zhang,
A comparative study on nonlocal diffusion operators related to the fractional Laplacian,
\emph{Discrete Contin. Dyn. Syst. Ser. B} \textbf{24} (2019), no.~1, 231--256.

\bibitem{dung2}
N.~T.~Dung,
Tail estimates for exponential functionals and applications to SDEs,
\emph{Stochastic Process. Appl.} \textbf{128} (2018), 4154--4170.

\bibitem{dung}
N.~T.~Dung,
The probability of finite time blow-up of a semilinear SPDEs with fractional noise,
\emph{Statist. Probab. Lett.} \textbf{149} (2019), 86--92.

\bibitem{DydaKuznetsovKwasnicki2017}
B.~Dyda, S.~Kuznetsov, and M.~Kwa\'snicki,
Eigenvalues of the fractional Laplace operator in the unit ball,
\emph{J. London Math. Soc.} \textbf{95} (2017), 500--518.

\bibitem{fahimi2020_chaos_stoch_cancer}
M.~Fahimi, K.~Nouri and L.~Torkzadeh,
Chaos in a stochastic cancer model,
\emph{Physica A} \textbf{545} (2020), 123810.
\bibitem{FarwigIwabuchi2024}
R.~Farwig and T.~Iwabuchi,
\newblock Sobolev spaces on arbitrary domains and semigroups generated by the fractional Laplacian,
\newblock \emph{Bull. Sci. Math.} \textbf{193} (2024), 103440.
\newblock doi:10.1016/j.bulsci.2024.103440.



\bibitem{foon2016}
M.~Foondun, J.~B.~Mijena and E.~Nane,
Non-linear noise excitation for some space-time fractional stochastic equations in bounded domains,
\emph{Fract. Calc. Appl. Anal.} \textbf{19} (2016), 1527--1553.

\bibitem{FoondunOsgoodSPDE}
M.~Foondun and E.~Nualart,
The Osgood condition for stochastic partial differential equations,
\emph{Bernoulli} \textbf{27} (2021), no.~1, 295--311.





\bibitem{fritz2025_stoch_cahn_hilliard}
M.~Fritz and  L.~Scarpa,
Analysis and computations of a stochastic Cahn--Hilliard model for tumor growth with chemotaxis and variable mobility, \emph{Stochastics and Partial Differential Equations: Analysis and Computations} \textbf{13} (2025), 1051--1096.

\bibitem{fuji1966}
H.~Fujita,
On the blowing up of solutions of the Cauchy problem for $u_t=\Delta u+u^{1+\alpha}$,
\emph{J. Fac. Sci. Univ. Tokyo Sect. I} \textbf{13} (1966), 109--124.

\bibitem{Fuji1968}
H.~Fujita,
On some nonexistence and nonuniqueness theorems for nonlinear parabolic equations,
\emph{Proc. Sympos. Pure Math.} \textbf{18} (1968), 105--113.

\bibitem{nonlocal_tumor_growth2025}
G.~Granero-Belinchon and M.~Magliocca,
A nonlocal equation describing tumor growth, \emph{Mathematical Models and Methods in Applied Sciences} \textbf{35} (2025), no.~03, 585--609.



\bibitem{Eug2017}
E.~Guerrero and J.~A.~L\'opez-Mimbela,
Perpetual integral functionals of Brownian motion and blowup of semilinear systems of SPDEs,
\emph{Commun. Stoch. Anal.} \textbf{11} (2017), no.~3, 335--356.

\bibitem{gundogdu2025_time_frac_cancer}
H.~Gundogdu and H.~Joshi,
Numerical analysis of time-fractional cancer models with different types of net killing rate,
\emph{Mathematics} \textbf{13} (2025), 536.

\bibitem{IGCR}
I.~Gy\"ongy and C.~Rovira,
On $L^p$-solutions of semilinear stochastic partial differential equations,
\emph{Stochastic Process. Appl.} \textbf{90} (2000), 83--108.

\bibitem{Hening2018}
A.~Hening, D.~H.~Nguyen and G.~Yin,
Stochastic population growth in spatially heterogeneous environments: the density-dependent case,
\emph{J. Math. Biol.} \textbf{76} (2018), 697--754.


\bibitem{H18}
S.~Huda \emph{et al.},
L\'evy-like movement patterns of metastatic cancer cells revealed in microfabricated systems and implicated in vivo,
\emph{Nat. Commun.} \textbf{9} (2018), 4539.

\bibitem{JarohsWeth2016}
S.~Jarohs and T.~Weth,
On the strong maximum principle for nonlocal operators,
\emph{Math. Z.} \textbf{293} (2019), no.~1, 81--111.

\bibitem{kaplan}
S.~Kaplan,
On the growth of solutions of quasilinear parabolic equations,
\emph{Comm. Pure Appl. Math.} \textbf{16} (1963), no.~3, 305--333.

\bibitem{Karatzas}
I.~Karatzas and S.~E.~Shreve,
\emph{Brownian Motion and Stochastic Calculus},
Graduate Texts in Mathematics, Springer, 1991.

\bibitem{kava}
N.~I.~Kavallaris, C.~V.~Nikolopoulos and A.~N.~Yannacopoulos,
On the impact of noise on quenching for a nonlocal diffusion model driven by a mixture of Brownian and fractional Brownian motions,
\emph{Discrete Contin. Dyn. Syst. Ser. S} \textbf{17} (2024), no.~3, 1222--1268.

\bibitem{K15}
N.~I.~Kavallaris,
Explosive solutions of a stochastic non-local reaction-diffusion equation arising in shear band formation,
\emph{Math. Methods Appl. Sci.} \textbf{38} (2015), no.~16, 3564--3574.

\bibitem{KS18}
N.~I.~Kavallaris and T.~Suzuki,
\emph{Non-local Partial Differential Equations for Engineering and Biology},
Math. Ind. (Tokyo), vol.~31, Springer, Cham, 2018.

\bibitem{KY20}
N.~I.~Kavallaris and Y.~Yan,
Finite-time blow-up of a non-local stochastic parabolic problem,
\emph{Stochastic Process. Appl.} \textbf{130} (2020), no.~9, 5605--5635.

\bibitem{kazmierczak2025_tissue_growth}
B.~Kazmierczak and V.~Volpert,
Mathematical modelling of tissue growth control by positive and negative feedbacks,
\emph{PLoS One} \textbf{20} (2025), e0319120.


\bibitem{car2013}
E.~T.~Kolkovska and J.~A.~L\'opez-Mimbela,
Sub- and super-solution of a nonlinear PDE and application to semilinear SPDE,
\emph{Pliska Stud. Math. Bulgar.} \textbf{22} (2013), 109--116.

\bibitem{Kwasnicki2017}
M.~Kwa\'snicki,
Ten equivalent definitions of the fractional Laplace operator,
\emph{Fract. Calc. Appl. Anal.} \textbf{20} (2017), 7--51.

\bibitem{kwossek2025_sde_fbm}
A.~P.~Kwossek, A.~Neuenkirch and D.~J.~Pr\"omel,
Stochastic differential equations driven by fractional Brownian motion: dependence on the Hurst parameter, preprint (2025), arXiv:2504.04860.

\bibitem{Li2015fBm}
K.~Li,
Stochastic delay fractional evolution equations driven by fractional Brownian motion,
\emph{Math. Methods Appl. Sci.} \textbf{38} (2015), no.~8, 1582--1591.

\bibitem{li}
M.~Li and H.~Gao,
Estimates of blow-up times of a system of semilinear SPDEs,
\emph{Math. Methods Appl. Sci.} \textbf{40} (2017), no.~11, 4149--4159.

\bibitem{mathematical_oncology_review2025}
C.~Li and J.~Lei,
Mathematical modeling of tumor--immune interactions: methods, applications, and future perspectives, preprint (2025), arXiv:2511.00507.

\bibitem{liang}
F.~Liang and S.~Zhao,
Global existence and finite time blow-up for a stochastic non-local reaction-diffusion equation,
\emph{J. Geom. Phys.} \textbf{178} (2022), 104577.


\bibitem{Lord}
G.~J.~Lord, C.~E.~Powell and T.~Shardlow,
\emph{An Introduction to Computational Stochastic PDEs},
Cambridge University Press, 2014.

\bibitem{liu2017}
J.~Liu and C.~A.~Tudor,
Stochastic heat equation with fractional Laplacian and fractional noise: existence of the solution and analysis of its density,
\emph{Acta Math. Sci.} \textbf{37} (2017), 1545--1566.




\bibitem{MaslowskiNualart2003}
B.~Maslowski and D.~Nualart,
Evolution equations driven by a fractional Brownian motion,
\emph{J. Funct. Anal.} \textbf{202} (2003), 277--305.

\bibitem{yor2005}
H.~Matsumoto and M.~Yor,
Exponential functionals of Brownian motion I: probability laws at fixed time,
\emph{Probab. Surv.} \textbf{2} (2005), 312--347.

\bibitem{meerschaert_scheffler2017_ctrw}
M.~M.~Meerschaert and H.~P.~Scheffler,
Continuous time random walks and space--time fractional differential equations,
in \emph{Handbook of Fractional Calculus with Applications},
De Gruyter, Berlin, 2019.

\bibitem{Mishura2008}
Y.~S.~Mishura,
\emph{Stochastic Calculus for Fractional Brownian Motion and Related Processes},
Lecture Notes in Mathematics~1929, Springer, 2008.

\bibitem{Nualart2006}
D.~Nualart,
\emph{The Malliavin Calculus and Related Topics},
2nd ed., Springer, 2006.

\bibitem{NualartRascanu2002}
D.~Nualart and A.~R\u{a}\c{s}canu,
Differential equations driven by fractional Brownian motion,
\emph{Collect. Math.} \textbf{53} (2002), 55--81.

\bibitem{NV06}
D.~Nualart and P.~Vuillermot,
Variational solutions for partial differential equations driven by a fractional noise,
\emph{J. Funct. Anal.} \textbf{232} (2006), 390--474.


\bibitem{Pazy1983}
A.~Pazy,
\emph{Semigroups of Linear Operators and Applications to Partial Differential Equations},
Applied Mathematical Sciences~44, Springer, 1983.


\bibitem{PrevotRoeckner2007}
C.~Pr\'ev\^ot and M.~R\"ockner,
\emph{A Concise Course on Stochastic Partial Differential Equations},
Lecture Notes in Mathematics~1905, Springer, 2007.

\bibitem{SoupletQuittner2007}
P.~Quittner and Ph.~Souplet,
\emph{Superlinear Parabolic Problems: Blow-Up, Global Existence and Steady States},
Birkh\"auser Advanced Texts, Birkh\"auser Verlag, Basel, 2007.

\bibitem{Ralchenko2018}
K.~Ralchenko and G.~Shevchenko,
Existence and uniqueness of mild solution to fractional stochastic heat equation,
\emph{Modern Stochastics: Theory and Applications} \textbf{6} (2019), 57--79.

\bibitem{revuz1999}
D.~Revuz and M.~Yor,
\emph{Continuous Martingales and Brownian Motion},
Springer, Berlin, 1999.


\bibitem{smk1}
S.~Sankar, M.~T.~Mohan and S.~Karthikeyan,
Lower and upper bounds for the explosion times of a system of semilinear SPDEs,
\emph{Stochastics} \textbf{96} (2024), 846--886.

\bibitem{smk2}
S.~Sankar, M.~T.~Mohan and S.~Karthikeyan,
Blow-up estimates for a system of semilinear SPDEs driven by mixed fractional Brownian motions,
preprint/communicated (2024), arXiv:2207.12343.

\bibitem{smk3}
S.~Sankar, M.~T.~Mohan and S.~Karthikeyan,
Global existence and non-existence of weak solutions for non-local stochastic semilinear reaction-diffusion equations driven by a fractional noise,
preprint/communicated (2024), arXiv:2311.05926.


\bibitem{ServadeiValdinoci2014_Visc}
R.~Servadei and E.~Valdinoci,
Weak and viscosity solutions of the fractional Laplace equation,
\emph{Publ. Mat.} \textbf{58} (2014), no.~1, 133--154.

\bibitem{Stinga2018}
P.~R.~Stinga,
User's guide to the fractional Laplacian and the method of semigroups,
in \emph{Handbook of Fractional Calculus with Applications}, Vol.~2,
De Gruyter, 2018, pp.~235--265.



\bibitem{sweilam2025_frac_stoch_rd_cancer}
N.~Sweilam, S.~Al-Mekhlafi, W.~Abdel~Kareem and G.~Alqurishi,
Piecewise cancer tumor disease of partial differential equations based on exponential and non-singular kernel; numerical treatments,
\emph{Frontiers in Scientific Research and Technology} (accepted manuscript), 2025.

\bibitem{teschl}
G.~Teschl,
\newblock \emph{Ordinary Differential Equations and Dynamical Systems},
\newblock Graduate Studies in Mathematics, Vol.~140,
\newblock American Mathematical Society, Providence, RI, 2012.


\bibitem{Tesfay2021}
A.~Tesfay, D.~Tesfay, A.~Khalaf, and J.~Brannan,
Mean exit time and escape probability for the stochastic logistic growth model with multiplicative $\alpha$-stable L\'evy noise,
\emph{Stochastics Dynam.} \textbf{21} (2021), 2150016.


\bibitem{TindelTudorViens2003}
S.~Tindel, C.~A.~Tudor and F.~Viens,
Stochastic evolution equations with fractional Brownian motion,
\emph{Probab. Theory Related Fields} \textbf{127} (2003), 186--204.

\bibitem{tsai2012_fractal_tumour}
Y.-H.~Tsai \emph{et al.},
A fractal anomalous diffusion model with microenvironment for cancer invasion,
\emph{Theor. Biol. Med. Model.} \textbf{9} (2012), 27.

\bibitem{volpert2025_reaction_diffusion_waves}
V.~Volpert and S.~Petrovskii,
Reaction--diffusion waves in biology: new trends, recent developments,
\emph{Phys. Life Rev.} \textbf{52} (2025), 1--20.

\bibitem{wang}
X.~Wang,
Blow-up solutions of the stochastic nonlocal heat equations,
\emph{Stochastics Dynam.} \textbf{19} (2019), no.~2, 1950014.


\bibitem{west2022_fractal_tapestry}
B.~J.~West,
The fractal tapestry of life II: entailment of fractional oncology by physiology networks,
\emph{Front. Netw. Physiol.} \textbf{2} (2022), 845495.

\bibitem{yor2001}
M.~Yor,
\emph{Exponential Functionals of Brownian Motion and Related Processes},
Springer Finance, 2001.

\bibitem{young1936}
L.~C.~Young,
An inequality of the H\"older type connected with Stieltjes integration,
\emph{Acta Math.} \textbf{67} (1936), 251--282.

\bibitem{zaburdaev2015_levy_walks}
V.~Zaburdaev, S.~Denisov and J.~Klafter,
L\'evy walks,
\emph{Rev. Mod. Phys.} \textbf{87} (2015), 483--530.

\bibitem{Zahle1998}
M.~Z\"ahle,
Integration with respect to fractal functions and stochastic calculus,
\emph{Probab. Theory Related Fields} \textbf{111} (1998), 333--374.

\bibitem{Zahle}
M.~Z\"ahle,
Integration with respect to fractal functions and stochastic calculus II,
\emph{Math. Nachr.} \textbf{225} (2001), 145--183.

\end{thebibliography}
\end{document}